\sloppy\pagestyle{plain}
\newtheorem{theorem}[equation]{Theorem}
\newtheorem{lemma}[equation]{Lemma}
\newtheorem{corollary}[equation]{Corollary}
\newtheorem{problem}[equation]{Problem}
\theoremstyle{definition}
\newtheorem{definition}[equation]{Definition}
\theoremstyle{remark}
\newtheorem{remark}[equation]{Remark}
\makeatletter\@addtoreset{equation}{section} \makeatother
\def\P {\mathbb{P}}
\def\SS {\mathfrak{S}}
\def\A {\mathfrak{A}}
\def\mumu{\boldsymbol{\mu}}
\def\vstrut {\vphantom{$\sqrt{0}^A_j$}}
\author{Ivan Cheltsov and Constantin Shramov}
\title{Two rational nodal quartic threefolds}
\address{\emph{Ivan Cheltsov}
\newline
\textnormal{School of Mathematics, The University of Edinburgh,  Edinburgh EH9 3JZ, UK.}
\newline
\textnormal{National Research University Higher School of Economics, Russian Federation, AG Laboratory, HSE, 7 Vavilova str., Moscow, 117312, Russia.}
\newline
\textnormal{\texttt{I.Cheltsov@ed.ac.uk}}}
\address{\emph{Constantin Shramov}
\newline
\textnormal{Steklov Institute of Mathematics, 8 Gubkina street, Moscow 119991, Russia.}
\newline
\textnormal{National Research University Higher School of Economics, Russian Federation, AG Laboratory, HSE, 7 Vavilova str., Moscow, 117312, Russia.}
\newline
\textnormal{\texttt{costya.shramov@gmail.com}}}
\begin{document}

\begin{abstract}
We prove that the quartic threefolds defined by
$$
\sum_{i=0}^{5}x_i=\sum_{i=0}^{5}x_i^4-t\left(\sum_{i=0}^{5}x_i^2\right)^2=0
$$
in $\mathbb{P}^5$ are rational for $t=\frac{1}{6}$ and $t=\frac{7}{10}$.
\end{abstract}

\sloppy

\maketitle

\section{Introduction}
\label{section:intro}

Consider the six-dimensional permutation representation $\mathbb{W}$
of the group~$\mathfrak{S}_6$. Choose coordinates $x_0,\ldots,x_5$
in $\mathbb{W}$ so that they are permuted by~$\mathfrak{S}_6$.
Then~\mbox{$x_0\ldots,x_5$} also serve as homogeneous coordinates
in the projective space~\mbox{$\mathbb{P}^5=\mathbb{P}(\mathbb{W})$}.

Let us identify $\mathbb{P}^4$ with a hyperplane
$$x_0+x_1+x_2+x_3+x_4+x_5=0$$
in $\mathbb{P}^5$.
Denote by $X_{t}$ the quartic threefold in $\mathbb{P}^4$ that is given by the equation
\begin{equation}
\label{equation:quartic}
x_0^4+x_1^4+x_2^4+x_3^4+x_4^4+x_5^4=t\Big(x_0^2+x_1^2+x_2^2+x_3^2+x_4^2+x_5^2\Big)^2,
\end{equation}
where $t$ is an element of the ground field, which we will
always assume to be the field~$\mathbb{C}$ of complex numbers.
Every $\SS_6$-invariant quartic in $\P(\mathbb{W})$
is one of the quartics~$X_t$. Moreover, every quartic
threefold with a faithful $\SS_6$-action is isomorphic to some~$X_t$.
All quartics~$X_t$ are singular.
Indeed, denote by $\Sigma_{30}$ the $\mathfrak{S}_6$-orbit of
the point~\mbox{$[1:1:\omega:\omega:\omega^2:\omega^2]$},
where~\mbox{$\omega=e^{\frac{2\pi i}{3}}$}.
Then $|\Sigma_{30}|=30$, and $X_{t}$ is singular at every point of $\Sigma_{30}$ for every $t\in\mathbb{C}$ (see, for example,  \cite[Theorem~4.1]{Geer}).

The possible singularities of the quartic threefold $X_t$ have been described by van der Geer in \cite[Theorem~4.1]{Geer}.
To recall his description, denote by $\mathcal{L}_{15}$ the $\mathfrak{S}_6$-orbit of the line
that passes through the points $[1:0:-1:1:0:-1]$ and $[0:1:-1:0:1:-1]$,
and denote by $\Sigma_{6}$, $\Sigma_{10}$, and $\Sigma_{15}$ the $\mathfrak{S}_6$-orbits of the points
$[-5:1:1:1:1:1]$, $[-1:-1:-1:1:1:1]$, and $[1:-1:0:0:0:0]$, respectively.
Then the curve $\mathcal{L}_{15}$ is a union of fifteen lines, while
$|\Sigma_{6}|=6$, $|\Sigma_{10}|=10$, and $|\Sigma_{15}|=15$.
Moreover, one has
$$
\mathrm{Sing}(X_t)=\left\{\aligned
&\mathcal{L}_{15}\ \text{if}\ t=\frac{1}{4},\\
&\Sigma_{30}\cup\Sigma_{15}\ \text{if}\ t=\frac{1}{2},\\
&\Sigma_{30}\cup\Sigma_{10}\ \text{if}\ t=\frac{1}{6},\\
&\Sigma_{30}\cup\Sigma_{6}\ \text{if}\ t=\frac{7}{10},\\
&\Sigma_{30}\ \text{otherwise}.\\
\endaligned
\right.
$$
Furthermore, if $t\ne\frac{1}{4}$, then all singular points of the quartic threefold $X_t$ are isolated ordinary double points (nodes).

The threefold $X_{\frac{1}{2}}$ is classical.
It is the so-called \emph{Burkhardt quartic}.
In \cite{Burkhardt}, Burkhardt discovered that the subset
$\Sigma_{30}\cup\Sigma_{15}$ is
invariant under the action of the simple group~\mbox{$\mathrm{PSp}_4(\mathbf{F}_3)$} of order~$25920$.
In \cite{Coble}, Coble proved that $\Sigma_{30}\cup\Sigma_{15}$ is the singular locus
of the threefold~$X_{\frac{1}{2}}$,
and proved that $X_{\frac{1}{2}}$ is also $\mathrm{PSp}_4(\mathbf{F}_3)$-invariant.
Later Todd proved in~\cite{To36a} that~$X_{\frac{1}{2}}$ is rational.
In \cite{JSV1990}, de Jong, Shepherd-Barron, and Van de Ven proved
that~$X_{\frac{1}{2}}$ is the unique quartic threefold in $\mathbb{P}^4$ with $45$ singular points.

The quartic threefold $X_{\frac{1}{4}}$ is also classical.
It is known as the \emph{Igusa quartic} from its modular interpretation as
the Satake compactification of the moduli space of Abelian surfaces with
level $2$ structure (see \cite{Geer}).
The projectively dual variety of the quartic threefold $X_{\frac{1}{4}}$ is the so-called \emph{Segre cubic}.
Since the Segre cubic is rational, $X_{\frac{1}{4}}$ is rational as well.

During \emph{Kul!fest} conference dedicated to the $60$th anniversary of Viktor Kulikov
that was held in Moscow in December 2012, Alexei Bondal and Yuri Prokhorov posed

\begin{problem}
\label{problem:Bondal-Prokhorov}
Determine all $t\in\mathbb{C}$ such that $X_{t}$ is rational.
\end{problem}

Since $X_t$ is singular, we cannot apply Iskovskikh and Manin's theorem
from \cite{IM71} to~$X_t$.
Similarly, we cannot apply Mella's \cite[Theorem~2]{Mella} to $X_t$ either,
because the quartic threefold $X_t$ is not $\mathbb{Q}$-factorial by \cite[Lemma~2]{Beauville}.
Nevertheless, Beauville proved

\begin{theorem}[{\cite{Beauville}}]
\label{theorem:Beauville}
If $t\not\in\{\frac{1}{2},\frac{1}{4},\frac{1}{6},\frac{7}{10}\}$, then $X_{t}$ is non-rational.
\end{theorem}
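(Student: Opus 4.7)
The plan is to establish non-rationality by means of the Clemens--Griffiths intermediate Jacobian criterion. For $t\notin\{\frac{1}{2},\frac{1}{4},\frac{1}{6},\frac{7}{10}\}$ the singular locus of $X_t$ is exactly $\Sigma_{30}$, consisting of $30$ ordinary double points. I would first construct an $\mathfrak{S}_6$-equivariant projective small resolution $\pi\colon\widetilde{X}_t\to X_t$. Although $X_t$ is not $\mathbb{Q}$-factorial by the cited lemma of Beauville, the nonzero defect together with the $\mathfrak{S}_6$-symmetry allows one to pick the small resolutions of the thirty nodes in a coherent projective manner (passing if necessary to a suitable small $\mathbb{Q}$-factorial modification). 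Then $\widetilde{X}_t$ is smooth projective, and if $X_t$ were rational, Clemens--Griffiths would force the intermediate Jacobian $J(\widetilde{X}_t)$ to be isomorphic, as a principally polarized abelian variety, to a product of Jacobians of smooth projective curves (or to vanish).

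The second step is to identify $J(\widetilde{X}_t)$ geometrically. A natural way is to project $X_t$ from a node $p\in\Sigma_{30}$, realizing $X_t$ as a double cover of $\mathbb{P}^3$ branched over a sextic surface $B_t$ whose own singular locus inherits the remaining nodes. Composing with an appropriate auxiliary pencil (for instance hyperplanes through two nodes of $\Sigma_{30}$, or through an $\mathfrak{S}_6$-distinguished line), one would exhibit a further birational model of $\widetilde{X}_t$ as a conic or quadric fibration $\varphi\colon\widetilde{X}_t\dashrightarrow S$ over a rational surface or curve $S$. Beauville's Prym formalism for such fibrations then identifies $J(\widetilde{X}_t)$ with the Prym variety $P(\widetilde{D}_t/D_t)$ associated to the admissible double cover $\widetilde{D}_t\to D_t$ parametrizing rulings of the degenerate fibers over the discriminant locus $D_t\subset S$.

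The central obstacle, and the hardest part of the proof, is to show that $P(\widetilde{D}_t/D_t)$ is not isomorphic to a product of Jacobians of smooth curves as a principally polarized abelian variety. Here I would exploit the residual $\mathfrak{S}_6$-equivariance: the Prym variety carries a natural $\mathfrak{S}_6$-action (or an action of the stabilizer of the chosen projection center), and the list of $\mathfrak{S}_6$-isotypic decompositions of $H^1$ that can occur on a product of curve Jacobians of the relevant total dimension is extremely constrained by the dimensions and characters of the irreducible $\mathfrak{S}_6$-representations. A case-by-case comparison, supported by a specialization argument (letting $t$ tend to one of the degenerate values where the Prym structure is explicit), should rule out every possible rational decomposition for generic $t$. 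Finally, the four excluded values appear precisely because at $t=\frac{1}{4}$ one has the Igusa quartic, which is rational via its projective duality with the Segre cubic, while at $t=\frac{1}{2},\frac{1}{6},\frac{7}{10}$ the additional nodes $\Sigma_{15},\Sigma_{10},\Sigma_{6}$ cause the discriminant $D_t$ and the cover $\widetilde{D}_t\to D_t$ to degenerate, so that $P(\widetilde{D}_t/D_t)$ drops in dimension or becomes reducible and the obstruction vanishes; indeed, the main theorem of the present paper demonstrates that $X_t$ is in fact rational for $t=\frac{1}{6}$ and $t=\frac{7}{10}$.
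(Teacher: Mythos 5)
First, a structural point: the paper does not prove Theorem~\ref{theorem:Beauville} at all --- it is quoted from Beauville's article and used as a black box, so there is no internal proof to compare your argument with; your proposal has to stand on its own as a reconstruction of Beauville's argument. As it stands it is an outline in which every decisive step is asserted rather than established, and several steps are problematic. (i) The existence of an $\mathfrak{S}_6$-equivariant \emph{projective} small resolution of all thirty nodes does not follow from ``nonzero defect plus symmetry'', and the proposed fallback, a small $\mathbb{Q}$-factorialization, is still a singular (nodal) variety, to which the Clemens--Griffiths criterion does not apply. This particular difficulty is avoidable --- one may simply blow up the thirty nodes to obtain a smooth projective model $\hat{X}_t$ carrying the same weight-three Hodge structure as any small resolution and apply Clemens--Griffiths there --- but the proposal as written does not take that route. (ii) One must then face the dimension count: $h^{1,2}(\hat{X}_t)=30-30+\delta=\delta$, where $\delta$ is the defect of $X_t$, so the intermediate Jacobian you propose to use has dimension exactly $\delta$. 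Unless one proves $\delta\geqslant 4$, every principally polarized abelian variety of that dimension is automatically a product of Jacobians and the criterion is vacuous; the proposal never computes $\delta$, and the non-$\mathbb{Q}$-factoriality statement cited in the introduction only gives $\delta>0$.

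(iii) The passage to a conic or quadric fibration is not a construction. Projection from a node exhibits $X_t$ as birational to a double cover of $\mathbb{P}^3$ branched in a sextic, not as a conic bundle; ``composing with an appropriate auxiliary pencil'' does not specify a morphism whose fibres are conics, and without an explicit conic bundle structure, an explicit discriminant curve $D_t$, and its admissible double cover, the Prym formalism cannot even be set up. (iv) The proposed mechanism for ruling out a decomposition into Jacobians --- comparing $\mathfrak{S}_6$-isotypic decompositions of $H^1$ of products of curve Jacobians, supported by specialization to the degenerate values of $t$ --- is not a proof: nothing constrains which curves, with which group actions, could occur in such a product, and the splitting of the intermediate Jacobian is not a closed or open condition controlled by the limits at $t\in\{\frac{1}{2},\frac{1}{6},\frac{7}{10}\}$, so the behaviour there says nothing about nearby $t$. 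The standard non-splitting criteria for Prym varieties (Mumford, Shokurov, Beauville) require explicit knowledge of $D_t$ and of the double cover, which the proposal does not supply. In short, the general strategy --- intermediate Jacobians and Pryms --- is of the right vintage and is indeed the kind of argument Beauville deploys, but none of the steps that would make it a proof are carried out, and steps (i) and (iii) would fail in the form you give them.
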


Both $X_{\frac{1}{2}}$ and $X_{\frac{1}{4}}$ are rational. The goal of this paper is to prove

\begin{theorem}
\label{theorem:Todd}
The quartic threefolds $X_{\frac{1}{6}}$ and $X_{\frac{7}{10}}$ are also rational.
\end{theorem}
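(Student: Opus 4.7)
The plan is to construct, for each $t \in \{\tfrac{1}{6}, \tfrac{7}{10}\}$, an explicit birational map from $X_t$ to a rational variety, exploiting the extra orbit of nodes ($\Sigma_{10}$ or $\Sigma_6$) that distinguishes these two values of $t$ from the generic case. The natural first move is to project $X_t$ from a node $P$ in this extra orbit: for $t = \tfrac{7}{10}$, take $P = [-5:1:1:1:1:1] \in \Sigma_6$, whose stabilizer in $\SS_6$ is the large subgroup $\SS_5$; for $t = \tfrac{1}{6}$, take $P = [-1:-1:-1:1:1:1] \in \Sigma_{10}$, with stabilizer $(\SS_3 \times \SS_3) \rtimes \mathbb{Z}/2$. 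Since $P$ is an ordinary double point of $X_t$, the linear projection $\pi_P\colon X_t \dashrightarrow \mathbb{P}^3$ from $P$ is dominant and generically $2$-to-$1$: a general line through $P$ meets $X_t$ at $P$ with multiplicity $2$ and in two further points. Equivalently, after a small resolution $\widetilde X_t \to X_t$ at $P$, the induced map $\widetilde X_t \to \mathbb{P}^3$ is a generically finite degree-$2$ cover.

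To conclude rationality I would trivialize this double cover by exhibiting a rational section. Concretely, I would search for a surface $S \subset X_t$, invariant under the stabilizer of $P$, that maps birationally onto $\mathbb{P}^3$ under $\pi_P$; then the composition $\mathbb{P}^3 \dashrightarrow S \hookrightarrow X_t$ is a rational inverse of $\pi_P$, establishing that $X_t$ is birational to $\mathbb{P}^3$. Such an $S$ is a natural object to look for because $X_t$ is not $\mathbb{Q}$-factorial by \cite[Lemma~2]{Beauville}: the failure of $\mathbb{Q}$-factoriality at a node is witnessed precisely by divisors whose class is not a multiple of the hyperplane class, and the additional orbit $\Sigma_6$ or $\Sigma_{10}$ contributes extra defect classes specific to $t = \tfrac{7}{10}$ or $t = \tfrac{1}{6}$. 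I would search for $S$ among low-degree (quadric or cubic) hypersurface sections that are invariant under the stabilizer of $P$ and contain as many of the singular points of $X_t$ as possible, using the $\SS_6$-symmetry to cut down the parameter space.

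The main obstacle will be producing and verifying the surface $S$. Once $S$ is found and shown to be rational, the birational inverse $\mathbb{P}^3 \dashrightarrow S$ should follow from a direct, if elaborate, linear-algebra computation; but the existence of $S$ is the delicate point, depending on a numerical coincidence specific to $t = \tfrac{1}{6}$ or $t = \tfrac{7}{10}$ that is not available for generic $t$ (otherwise Beauville's Theorem~\ref{theorem:Beauville} would force non-rationality). I therefore expect the two cases to be handled by parallel but separate arguments, each producing an explicit equation for $S$ and verifying the rationality of the induced birational map by direct calculation, guided throughout by the symmetry of the stabilizer of $P$ to reduce the dimension of the computations.
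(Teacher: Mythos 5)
There is a genuine gap, and it sits at the heart of your plan. The projection $\pi_P\colon X_t\dasharrow\mathbb{P}^3$ from a node $P$ is indeed dominant and generically $2$-to-$1$, so $\pi_P^*$ realizes $\mathbb{C}(X_t)$ as a degree-two \emph{field} extension of $\mathbb{C}(\mathbb{P}^3)$ (a field because $X_t$ is irreducible). Such an extension admits no retraction onto $\mathbb{C}(\mathbb{P}^3)$; equivalently, a dominant generically $2$-to-$1$ map from an irreducible variety never has a rational section. Hence no subvariety of $X_t$, invariant or not, can ``trivialize the double cover'' in the way you describe. There is also a dimension mismatch: a surface $S\subset X_t$ is two-dimensional and cannot map birationally (or even dominantly) onto $\mathbb{P}^3$, and a composition $\mathbb{P}^3\dasharrow S\hookrightarrow X_t$ cannot be a rational inverse of a map that is not birational to begin with. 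What projection from a node actually produces is a birational model of $X_t$ as a double cover of $\mathbb{P}^3$ branched along the singular sextic $q_3^2-4q_2q_4=0$ (writing the quartic as $x^2q_2+xq_3+q_4=0$ with $x$ a coordinate vanishing on the hyperplane away from $P$); proving rationality of that double solid is a problem of essentially the same difficulty as the original one, and the proposal supplies no mechanism for it. Your instinct that the failure of $\mathbb{Q}$-factoriality and the extra orbit of nodes must be exploited is sound, but they have to enter through a different construction --- for instance through the determinantal structure of $X_{\frac{7}{10}}$ and $X_{\frac{1}{6}}$ mentioned in Remark~\ref{remark:determinantal}, which is itself a consequence of the maps built in the paper rather than an input.

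For comparison, the paper argues in the opposite direction: it never projects from $X_t$. It identifies $\mathbb{P}^3$ with $\mathbb{P}(\mathbb{U}_4)$ for the four-dimensional representation of $2.\mathfrak{A}_6$ (respectively, of $2.\mathfrak{S}_5^{nst}$), locates an invariant configuration of six pairwise disjoint lines (respectively, ten lines forming a double five), and considers the linear system $\mathcal{M}$ of quartic surfaces through these lines. After blowing up the lines one gets a threefold $U$ with $-K_U$ nef and big, $(-K_U)^3=4$, and $\mathcal{M}_U=|-K_U|$ base point free; the anticanonical morphism is then shown to be birational onto a quartic threefold in $\mathbb{P}^4$, which is identified with $X_{\frac{7}{10}}$ (respectively, $X_{\frac{1}{6}}$) by exhibiting a small $\mathfrak{A}_6$-orbit (respectively, the $\mathfrak{S}_5$-orbit $\Sigma_{10}$) on the image, coming from invariant curves contracted by the morphism. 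The group actions are used throughout to control base loci and orbit lengths, exactly where your proposal defers to ``direct, if elaborate, linear-algebra computation.''
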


Surprisingly, the proof of Theorem~\ref{theorem:Todd} goes back to two classical papers of Todd.
Namely, we will construct an explicit $\mathfrak{A}_6$-birational map $\mathbb{P}^3\dasharrow X_{\frac{7}{10}}$
that is a special case of Todd's construction from~\cite{To33}.
Similarly, we will construct an explicit $\mathfrak{S}_5$-birational map $\mathbb{P}^3\dasharrow X_{\frac{1}{6}}$
that is a degeneration of Todd's construction from~\cite{To35}.
We emphasize that our proof is self-contained, i.e. it does not rely on
the results proved in~\cite{To33} and~\cite{To35}, but recovers the necessary
facts in our particular situation using additional symmetries arising from
group actions.

\begin{remark}
\label{remark:determinantal}
Todd proved in \cite{To36a} that the Burkhardt quartic $X_{\frac{1}{2}}$ is determinantal (see also~\mbox{\cite[\S5.1]{Pet98}}).
The constructions of our birational maps $\mathbb{P}^3\dasharrow X_{\frac{7}{10}}$ and $\mathbb{P}^3\dasharrow X_{\frac{1}{6}}$
imply that both $X_{\frac{7}{10}}$ and $X_{\frac{1}{6}}$ are determinantal (see \cite[Example~6.4.2]{Pet98} and \cite[Example~6.2.1]{Pet98}).
Yuri Prokhorov pointed out that the quartic threefold
$$
\mathrm{det}\begin{pmatrix}
y_0& y_1& y_2& y_3\\
y_4& y_0& y_3& y_4\\
y_2& y_1& y_1& y_0\\
y_0& y_3& y_2& y_4
\end{pmatrix}=0
$$
in $\mathbb{P}^4$ with homogeneous coordinates $y_0,\ldots,y_4$
has exactly $45$ singular points. Thus, it is isomorphic to the Burkhardt quartic $X_{\frac{1}{2}}$ by \cite{JSV1990}.
It would be interesting to find similar determinantal equations of the threefolds $X_{\frac{7}{10}}$ and $X_{\frac{1}{6}}$.
\end{remark}

The plan of the paper is as follows. In Section~\ref{section:representations}
we recall some preliminary results on representations
of a central extension of the group $\mathfrak{S}_6$, and some of its
subgroups. In Section~\ref{section:icosahedron} we collect
results concerning a certain action of the group $\mathfrak{A}_5$ on
$\mathbb{P}^3$, and study $\mathfrak{A}_5$-invariant quartic surfaces; the reason we pay so much
attention to this group is that it is contained both in $\mathfrak{A}_6$ and in $\mathfrak{S}_5$,
and thus the information about its properties simplifies the study of the latter
two groups.
In Section~\ref{section:basic-actions} we collect auxiliary results
about the groups~$\mathfrak{S}_6$, $\mathfrak{A}_6$ and~$\mathfrak{S}_5$, in particular about their
actions on curves and their five-dimensional irreducible representations.
In Section~\ref{section:7-10} we construct
an $\mathfrak{A}_6$-equivariant birational
map~\mbox{$\mathbb{P}^3\dasharrow X_{\frac{7}{10}}$}.
Finally, in Section~\ref{section:1-6}
we construct an $\mathfrak{S}_5$-equivariant birational
map~\mbox{$\mathbb{P}^3\dasharrow X_{\frac{1}{6}}$}
and make some concluding remarks.

Throughout the paper, we denote a cyclic group of
order $n$ by $\mumu_n$, and we denote a dihedral group
of order $2n$ by $\mathrm{D}_{2n}$. In particular, one
has $\mathrm{D}_{12}\cong\mathfrak{S}_3\times\mumu_2$.
By $F_{36}$ we denote a group isomorphic
to~\mbox{$(\mumu_3\times\mumu_3)\rtimes \mumu_4$}, and
by $F_{20}$ we denote a group isomorphic to~\mbox{$\mumu_5\rtimes \mumu_4$}.

\medskip

{\bf Acknowledgements.}
This work was supported within the framework of a subsidy granted to the HSE
by the Government of the Russian Federation for the implementation of the Global Competitiveness Program.
Constantin Shramov was also supported by the grants
RFFI 15-01-02158, RFFI 15-01-02164, RFFI 14-01-00160, MK-2858.2014.1,
and NSh-2998.2014.1, and by Dynasty foundation.

We are grateful to Dmitrii Pasechnik, Yuri Prokhorov, Kristian Ranestad,
Leonid Rybnikov, and Andrey Trepalin
for very helpful and useful discussions.

\section{Representation theory}
\label{section:representations}

Recall that the permutation group $\mathfrak{S}_6$ has two central
extensions $2^+\mathfrak{S}_6$ and $2^-\mathfrak{S}_6$ by the group~$\mumu_2$
with the central subgroup contained
in the commutator subgroup (see~\mbox{\cite[p.~xxiii]{Atlas}} for details).
We denote the first of them (i.\,e. the one where the preimages
of a transposition in $\mathfrak{S}_6$ under the natural projection
have order two) by $2.\mathfrak{S}_6$ to simplify notation.
Similarly, for any group $\Gamma$ we denote by $2.\Gamma$ a non-split
central extension of $\Gamma$ by the group~$\mumu_2$.

We start with recalling some facts about four- and five-dimensional
representations
of the group~$2.\mathfrak{S}_6$ we will be working with.
A reader who is not interested in details here can skip to Corollary~\ref{corollary:characters},
or even to Section~\ref{section:basic-actions} where we reformulate everything
in geometric language. Also, we will see in Section~\ref{section:basic-actions} that our further constructions do not depend much on the choice of representations, and all computations one makes for one of them
actually apply to all others.

Let $\mathbb{I}$ and $\mathbb{J}$ be the trivial and the non-trivial one-dimensional
representations of the group $\mathfrak{S}_6$, respectively.
Consider the six-dimensional permutation representation $\mathbb{W}$
of~$\mathfrak{S}_6$. One has
$$
\mathbb{W}\cong \mathbb{I}\oplus \mathbb{W}_5\otimes \mathbb{J}
$$
for some irreducible representation $\mathbb{W}_5$ of $\mathfrak{S}_6$.
We can regard $\mathbb{I}$, $\mathbb{J}$ and $\mathbb{W}_5$ as representations of
the group $2.\mathfrak{S}_6$.
Recall that there is a double cover
$$\nabla\colon\mathrm{SL}_4(\mathbb{C})\to\mathrm{SO}_6(\mathbb{C}),$$
see e.g.~\mbox{\cite[Exercise~20.39]{FultonHarris}}.
Taking the embedding of the group $2.\SS_6$ into $\mathrm{SO}_6(\mathbb{C})$
via the representation $\mathbb{I}\oplus\mathbb{W}_5$ and considering
its preimage with respect to $\nabla$, we produce
an embedding of the group~$2.\mathfrak{S}_6$
into~$\mathrm{SL}_4(\mathbb{C})$.
This embedding gives rise to two four-dimensional
representations of~$2.\mathfrak{S}_6$ that differ by a tensor product
with~$\mathbb{J}$.
We fix one of these two representations~$\mathbb{U}_4$.
Note that
$$
\mathbb{I}\oplus\mathbb{W}_5\cong \Lambda^2(\mathbb{U}_4).
$$

Recall that there are coordinates $x_0,\ldots,x_5$ in $\mathbb{W}$
that are permuted by the group $\mathfrak{S}_6$.
We will refer to a subgroup
of~\mbox{$2.\mathfrak{S}_6$} fixing one of the corresponding points
as a \emph{standard} subgroup $2.\mathfrak{S}_5$; we denote
any such subgroup by $2.\mathfrak{S}_5^{st}$. A subgroup
of $2.\mathfrak{S}_6$ that is isomorphic to $2.\mathfrak{S}_5$ but is not conjugate
to a standard~\mbox{$2.\mathfrak{S}_5$} will be called a \emph{non-standard}
subgroup $2.\mathfrak{S}_5$; we denote any such subgroup by $2.\mathfrak{S}_5^{nst}$.
These agree with standard and non-standard
subgroups of $\mathfrak{S}_6$ isomorphic to~$\mathfrak{S}_5$, although outer
automorphisms of $\mathfrak{S}_6$ do not lift to $2.\mathfrak{S}_6$.
Any subgroup of $2.\mathfrak{S}_6$ that is isomorphic to~$2.\mathfrak{A}_5$,
$2.\mathfrak{S}_4$ or~$2.\mathfrak{A}_4$ and is contained in $2.\mathfrak{S}_5^{st}$
is denoted by $2.\mathfrak{A}_5^{st}$, $2.\mathfrak{S}_4^{st}$ or $2.\mathfrak{A}_4^{st}$, respectively.
Similarly, any subgroup of $2.\mathfrak{S}_6$ that is isomorphic to
$2.\mathfrak{A}_5$, $2.\mathfrak{S}_4$ or $2.\mathfrak{A}_4$ and is contained in $2.\mathfrak{S}_5^{nst}$
is denoted by $2.\mathfrak{A}_5^{nst}$, $2.\mathfrak{S}_4^{nst}$ or $2.\mathfrak{A}_4^{nst}$, respectively.

The values of characters of important representations of the group $2.\mathfrak{S}_6$,
and the information about some of its subgroups are presented
in Table~\ref{table:characters}, cf.~\cite[p.~5]{Atlas}.
The first two columns of Table~\ref{table:characters} describe conjugacy classes
of elements of the group $2.\mathfrak{S}_6$. The first column lists the orders of
the elements in the corresponding conjugacy class, and the second column, except for the
entries in the second and the third row,
gives a cycle type of the image of an element under projection to $\mathfrak{S}_6$
(for example, $[3,2]$ denotes a product of two disjoint cycles of
lengths $3$ and $2$). By $\mathrm{id}$ we denote the identity element
of $2.\mathfrak{S}_6$, and $z$ denotes the unique non-trivial central element
of $2.\mathfrak{S}_6$.
Note that the preimages of some of conjugacy classes in $\mathfrak{S}_6$ split into a union
of two conjugacy classes in $2.\mathfrak{S}_6$.
The next three columns list the values of
the characters of the representations
$\mathbb{W}$, $\mathbb{W}_5$
and $\mathbb{U}_4$ of $2.\mathfrak{S}_6$.
Note that there is no real ambiguity in the choice of $\sqrt{-3}$
since we did not specify any way to distinguish the two conjugacy classes
in $2.\mathfrak{S}_6$ whose elements are projected to cycles of length $6$ in $\mathfrak{S}_6$
up to this point (note that the two ways to choose a sign here is exactly
a tensor multiplication of the representation with~$\mathbb{J}$,
i.e. the choice between two homomorphisms of $2.\mathfrak{S}_6$ to
$\mathrm{SL}_4(\mathbb{C})$ having the same image).
The remaining columns list the numbers of elements from each of the conjugacy classes
of $2.\mathfrak{S}_6$ in subgroups of certain types.
By~\mbox{$2.F_{36}$} (respectively, by $2.F_{20}$,
by~\mbox{$2.\mathrm{D}_{12}^{nst}$}) we denote a subgroup of $2.\mathfrak{S}_6$
(respectively, of~\mbox{$2.\mathfrak{S}_6$}, or of $2.\mathfrak{S}_5^{nst}$)
isomorphic to a central extension of
$F_{36}$ (respectively, of $F_{20}$, or of $\mathrm{D}_{12}$) by~$\mumu_2$.
A subgroup $2.F_{20}$ is actually contained in a subgroup
$2.\mathfrak{S}_5^{st}$ and in a subgroup~\mbox{$2.\mathfrak{S}_5^{nst}$}.
Note that the intersection of a conjugacy class
in a group with a subgroup may (and often does) split
into several conjugacy classes in this subgroup.

\begin{table}[p]
\setbox1=\hbox{\parbox{1.5\linewidth}{%
\begin{minipage}{\textheight}
\centering
\caption{Characters and subgroups of the group $2.\mathfrak{S}_6$\label{table:characters}}
\begin{tabular}{|c|c||c|c|c||c|c|c|c|c|c|c|c|c|c|}
\hline
$\mathrm{ord}$& type &
$\mathbb{W}$ &
$\mathbb{W}_5$ &
$\mathbb{U}_4$ &
$2.\mathfrak{S}_6$ & $2.\mathfrak{A}_6$
& $2.\mathfrak{S}_5^{nst}$
& $2.\mathfrak{A}_5^{st}$ & $2.\mathfrak{A}_5^{nst}$
& $2.\mathfrak{S}_4^{nst}$
& $2.\mathfrak{A}_4^{nst}$
& $2.F_{36}$
& $2.F_{20}$
& $2.\mathrm{D}_{12}^{nst}$
\vstrut \\
\hline
$1$ & $\mathrm{id}$ & $6$ & $5$ & $4$ & $1$ & $1$ & $1$ & $1$ & $1$ & $1$ & $1$ & $1$ & $1$ & $1$
\vstrut \\
\hline
$2$ & $z$ & $6$ & $5$ & $-4$ & $1$ & $1$ & $1$ & $1$ & $1$ & $1$ & $1$ & $1$ & $1$ & $1$
\vstrut \\
\hline
$2$ & $[2]$ & $4$ & $-3$ & $0$ & $30$ & $0$ & $0$ & $0$ & $0$ & $0$ & $0$ & $0$ & $0$ & $0$
\vstrut \\
\hline
$4$ & $[2,2]$ & $2$ & $1$ & $0$ & $90$ & $90$ & $30$ & $30$ & $30$ & $6$ & $6$ & $18$ & $10$ & $6$
\vstrut \\
\hline
$4$ & $[2,2,2]$ & $0$ & $1$ & $0$ & $30$ & $0$ & $20$ & $0$ & $0$ & $12$ & $0$ & $0$ & $0$ & $8$
\vstrut \\
\hline
$6$ & $[3]$ & $3$ & $2$ & $2$ & $40$ & $40$ & $0$ & $20$ & $0$ & $0$ & $0$ & $4$ & $0$ & $0$
\vstrut \\
\hline
$3$ & $[3]$ & $3$ & $2$ & $-2$ & $40$ & $40$ & $0$ & $20$ & $0$ & $0$ & $0$ & $4$ & $0$ & $0$
\vstrut \\
\hline
$6$ & $[3,2]$ & $1$ & $0$ & $0$ & $120$ & $0$ & $0$ & $0$ & $0$ & $0$ & $0$ & $0$ & $0$ & $0$
\vstrut \\
\hline
$6$ & $[3,2]$ & $1$ & $0$ & $0$ & $120$ & $0$ & $0$ & $0$ & $0$ & $0$ & $0$ & $0$ & $0$ & $0$
\vstrut \\
\hline
$6$ & $[3,3]$ & $0$ & $-1$ & $-1$ & $40$ & $40$ & $20$ & $0$ & $20$ & $8$ & $8$ & $4$ & $0$ & $2$
\vstrut \\
\hline
$3$ & $[3,3]$ & $0$ & $-1$ & $1$ & $40$ & $40$ & $20$ & $0$ & $20$ & $8$ & $8$ & $4$ & $0$ & $2$
\vstrut \\
\hline
$8$ & $[4]$ & $2$ & $-1$ & $0$ & $180$ & $0$ & $60$ & $0$ & $0$ & $12$ & $0$ & $0$ & $20$ & $0$
\vstrut \\
\hline
$8$ & $[4,2]$ & $0$ & $-1$ & $0$ & $180$ & $180$ & $0$ & $0$ & $0$ & $0$ & $0$ & $36$ & $0$ & $0$
\vstrut \\
\hline
$10$ & $[5]$ & $1$ & $0$ & $1$ & $144$ & $144$ & $24$ & $24$ & $24$ & $0$ & $0$ & $0$ & $4$ & $0$
\vstrut \\
\hline
$5$ & $[5]$ & $1$ & $0$ & $-1$ & $144$ & $144$ & $24$ & $24$ & $24$ & $0$ & $0$ & $0$ & $4$ & $0$
\vstrut \\
\hline
$12$ & $[6]$ & $0$ & $1$ & $\sqrt{-3}$ & $120$ & $0$ & $20$ & $0$ & $0$ & $0$ & $0$ & $0$ & $0$ & $2$
\vstrut \\
\hline
$12$ & $[6]$ & $0$ & $1$ & $-\sqrt{-3}$ & $120$ & $0$ & $20$ & $0$ & $0$ & $0$ & $0$ & $0$ & $0$ & $2$
\vstrut \\
\hline
\end{tabular}
\end{minipage}
}} \rotatebox{90}{\box1}
\end{table}

It is immediate to see from Table~\ref{table:characters} that $\mathbb{U}_4$ is an irreducible
representation of the group~\mbox{$2.\mathfrak{S}_6$}.
Using the information provided by Table~\ref{table:characters}, we immediately
obtain the following results.

\begin{corollary}\label{corollary:characters}
Let $\Gamma$ be a subgroup of $2.\mathfrak{S}_6$. After restriction to
the subgroup~$\Gamma$ the \mbox{$2.\mathfrak{S}_6$-representation}~$\mathbb{U}_4$
\begin{itemize}
\item[(i)] remains irreducible,
if $\Gamma$ is one of the subgroups $2.\mathfrak{A}_6$, $2.\mathfrak{S}_5^{nst}$,
$2.\mathfrak{A}_5^{nst}$, $2.\mathfrak{S}_4^{nst}$, $2.F_{36}$, or $2.F_{20}$;
\item[(ii)] splits into a sum of two non-isomorphic irreducible
two-dimensional representations,
if $\Gamma$ is one of the subgroups $2.\mathfrak{A}_5^{st}$, $2.\mathfrak{A}_4^{nst}$,
or $2.\mathrm{D}_{12}^{nst}$.
\end{itemize}
\end{corollary}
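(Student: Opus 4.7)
The plan is to apply the standard inner-product criterion from character theory: for a representation $V$ of a finite group $\Gamma$ with character $\chi$, if $V=\bigoplus m_i V_i$ is the decomposition into pairwise non-isomorphic irreducibles, then
$$\langle \chi,\chi\rangle_\Gamma = \frac{1}{|\Gamma|}\sum_{g\in\Gamma}|\chi(g)|^2 = \sum_i m_i^2.$$
In particular, this inner product equals $1$ exactly when $V$ is irreducible, and equals $2$ exactly when $V\cong V_1\oplus V_2$ with $V_1\not\cong V_2$ both irreducible.

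For each subgroup $\Gamma$ listed in the corollary, the computation amounts to grouping elements of $\Gamma$ by their $2.\mathfrak{S}_6$-conjugacy class:
$$\langle\chi_{\mathbb{U}_4}|_\Gamma,\chi_{\mathbb{U}_4}|_\Gamma\rangle_\Gamma = \frac{1}{|\Gamma|}\sum_{C}|C\cap\Gamma|\cdot|\chi_{\mathbb{U}_4}(C)|^2,$$
with both factors read off directly from Table~\ref{table:characters}. In each of the six cases in part~(i) the sum evaluates to $|\Gamma|$, yielding inner product $1$ and hence irreducibility; for example, for $\Gamma=2.\mathfrak{A}_6$ of order $720$ the contributions are $16+16+0+160+160+40+40+0+144+144=720$. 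In each of the three cases in part~(ii) the sum evaluates to $2|\Gamma|$, so $\mathbb{U}_4|_\Gamma$ decomposes as a sum of two non-isomorphic irreducible subrepresentations $V_1\oplus V_2$; for instance, for $\Gamma=2.\mathfrak{A}_5^{st}$ the contributions are $16+16+0+80+80+24+24=240=2\cdot 120$.

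To conclude in part~(ii) that both summands are two-dimensional, observe that $\chi_{\mathbb{U}_4}(z)=-4$, so $z$ acts as $-\mathrm{Id}$ on $\mathbb{U}_4$ and hence on each $V_i$, giving $\chi_{V_i}(z)=-\dim V_i$. The unordered pairs of positive dimensions summing to $4$ are $\{1,3\}$ and $\{2,2\}$, so it suffices to rule out a one-dimensional summand; such a summand would be a character $\chi$ of $\Gamma$ with $\chi(z)=-1$, which requires $z\notin[\Gamma,\Gamma]$. However, $2.\mathfrak{A}_5^{st}\cong\mathrm{SL}_2(\mathbb{F}_5)$ is perfect; the commutator subgroup of $2.\mathfrak{A}_4^{nst}\cong\mathrm{SL}_2(\mathbb{F}_3)$ is the quaternion group $Q_8$, which contains $z=-\mathrm{Id}$; and for $2.\mathrm{D}_{12}^{nst}$ a direct check shows $z\in[2.\mathrm{D}_{12}^{nst},2.\mathrm{D}_{12}^{nst}]$ (for instance by exhibiting $z$ as an explicit commutator of two elements whose projections to $\mathfrak{S}_6$ are commuting elements of $\mathrm{D}_{12}^{nst}$ whose lifts to $2.\mathfrak{S}_6$ anticommute, as is standard in the Schur cover). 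In all three cases every one-dimensional character is trivial on $z$, excluding the $\{1,3\}$ possibility.

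The proof is essentially a mechanical application of character orthogonality, with the nine inner-product sums being straightforward table lookups. The only mildly subtle point is the commutator verification for $\Gamma = 2.\mathrm{D}_{12}^{nst}$, which requires a brief direct computation rather than purely general theory; this is the step where one must be slightly careful, though it reduces to a standard fact about the Schur cover of $\mathfrak{S}_6$.
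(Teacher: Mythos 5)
Your proof is correct and takes essentially the same approach as the paper: the paper's own proof is precisely the inner-product computation against Table~\ref{table:characters}, finishing part~(ii) by noting that none of $2.\mathfrak{A}_5^{st}$, $2.\mathfrak{A}_4^{nst}$, $2.\mathrm{D}_{12}^{nst}$ has an irreducible three-dimensional representation with non-trivial central action, whereas you instead exclude the one-dimensional summand. Your commutator claim for $2.\mathrm{D}_{12}^{nst}$ is indeed true and can be read off Table~\ref{table:characters} directly (every involution of $\mathrm{D}_{12}^{nst}$ lifts to an element of order $4$, so the preimage of a Klein four-subgroup is $Q_8$ and $z$ is a commutator), though the paper's variant sidesteps this verification.
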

\begin{proof}
Compute inner products of the corresponding characters with themselves,
and keep in mind that neither of the groups $2.\mathfrak{A}_5^{st}$, $2.\mathfrak{A}_4^{nst}$,
and~$2.\mathrm{D}_{12}^{nst}$ has an irreducible three-dimensional
representation with a non-trivial action of the central subgroup.
\end{proof}

\begin{remark}
By Corollary~\ref{corollary:characters}(i), the $2.\mathfrak{S}_5^{nst}$-representation
$\mathbb{U}_4$ is irreducible. One can check that it is
not induced from any proper subgroup of $2.\mathfrak{S}_5^{nst}$,
i.e. it defines a primitive subgroup isomorphic to $2.\mathfrak{S}_5$ in
$\mathrm{GL}_4(\mathbb{C})$. Note that this subgroup is not present in the list
given in~\cite[\S8.5]{Feit}. It is still listed by some other classical
surveys, see e.g.~\cite[\S119]{Bli17}.
\end{remark}

\begin{corollary}\label{corollary:characters-P4}
Let $\Gamma$ be a subgroup of $\mathfrak{S}_6$. After restriction to
the subgroup~$\Gamma$ the \mbox{$\mathfrak{S}_6$-representation}~$\mathbb{W}_5$
\begin{itemize}
\item[(i)] remains irreducible,
if $\Gamma$ is one of the subgroups $\mathfrak{A}_6$, $\mathfrak{S}_5^{nst}$,
or $\mathfrak{A}_5^{nst}$;

\item[(ii)] splits into a sum of the trivial and an irreducible
four-dimensional representation if $\Gamma$ is a subgroup~$\mathfrak{A}_5^{st}$;

\item[(iii)] splits into a sum of the trivial and two different
irreducible two-dimensional representations if $\Gamma$ is
a subgroup $\mathrm{D}_{12}^{nst}$.
\end{itemize}
\end{corollary}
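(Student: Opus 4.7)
The plan is to mimic the proof of Corollary~\ref{corollary:characters} almost verbatim, working with the character $\chi$ of $\mathbb{W}_5$ in place of that of $\mathbb{U}_4$. For each subgroup $\Gamma\subset\mathfrak{S}_6$ in the statement, I would identify its preimage $2.\Gamma$ in $2.\mathfrak{S}_6$ and read off $\chi|_\Gamma$ by combining the $\mathbb{W}_5$-column of Table~\ref{table:characters} with the column recording the distribution of conjugacy classes of $2.\mathfrak{S}_6$ inside $2.\Gamma$. Since the central element $z\in 2.\mathfrak{S}_6$ acts trivially on $\mathbb{W}_5$, its presence is harmless and one may compute inside $2.\Gamma$ throughout. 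The basic tools are the inner product $\langle\chi|_\Gamma,\chi|_\Gamma\rangle_\Gamma$ and the multiplicity $\langle\chi|_\Gamma,1\rangle_\Gamma$ of the trivial representation.

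For part~(i), a direct calculation yields $\langle\chi|_\Gamma,\chi|_\Gamma\rangle_\Gamma=1$ for each of $\Gamma=\mathfrak{A}_6$, $\mathfrak{S}_5^{nst}$, and $\mathfrak{A}_5^{nst}$, which immediately gives irreducibility.

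For part~(ii), the same computation for $\Gamma=\mathfrak{A}_5^{st}$ produces $\langle\chi|_\Gamma,\chi|_\Gamma\rangle_\Gamma=2$ and $\langle\chi|_\Gamma,1\rangle_\Gamma=1$. Hence $\mathbb{W}_5|_{\mathfrak{A}_5^{st}}$ decomposes as a trivial summand plus one further irreducible piece of dimension $4$, which must then be the unique four-dimensional irreducible representation of $\mathfrak{A}_5$.

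For part~(iii), for $\Gamma=\mathrm{D}_{12}^{nst}$ one computes $\langle\chi|_\Gamma,\chi|_\Gamma\rangle_\Gamma=3$ and $\langle\chi|_\Gamma,1\rangle_\Gamma=1$, so the restriction is a sum of three pairwise non-isomorphic irreducibles, exactly one of which is trivial. Since $\mathrm{D}_{12}\cong\mathfrak{S}_3\times\mumu_2$ admits only irreducible representations of dimensions $1$ and $2$, the only way to split the remaining four-dimensional complement into two distinct irreducibles is as $2+2$, which gives the claimed decomposition. The argument presents no serious obstacle; the only subtlety is that some conjugacy classes of $2.\mathfrak{S}_6$ may fuse or split upon restriction to $2.\Gamma$, but this does not affect the computation since one sums over elements of $2.\Gamma$ rather than over its classes when evaluating the inner product.
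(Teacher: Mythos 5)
Your proposal is correct and follows exactly the paper's approach: the paper derives this corollary directly from Table~\ref{table:characters} by the same inner-product computations $\langle\chi,\chi\rangle_\Gamma$ and $\langle\chi,1\rangle_\Gamma$ used for Corollary~\ref{corollary:characters} (the numbers you report, namely $1$, $1$, $1$ for part (i), $2$ and $1$ for part (ii), and $3$ and $1$ for part (iii), all check out against the table). Your closing remarks on the triviality of the $z$-action and on the dimension constraints for $\mathfrak{A}_5$ and $\mathrm{D}_{12}$ correctly supply the only details the paper leaves implicit.
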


In the sequel we will denote the restrictions of the
$2.\mathfrak{S}_6$-representation
$\mathbb{U}_4$ and of the $\SS_6$-representation $\mathbb{W}_5$
to various subgroups by the same symbols for simplicity.
The next two corollaries are implied by direct computations
(we used GAP software \cite{GAP} to perform them).

\begin{corollary}\label{corollary:Sym-U4}
The following assertions hold:
\begin{itemize}
\item[(i)]
the $\mathfrak{A}_6$-representation $\mathrm{Sym}^2(\mathbb{U}_4^\vee)$
does not contain one-dimensional subrepresentations;

\item[(ii)]
the $\mathfrak{A}_6$-representation $\mathrm{Sym}^4(\mathbb{U}_4^\vee)$
does not contain one-dimensional subrepresentations;

\item[(iii)] the $\mathfrak{A}_5^{nst}$-representation $\mathrm{Sym}^2(\mathbb{U}_4^\vee)$
splits into a sum of two different irreducible three-dimensional
representations
and one irreducible four-dimensional representation;

\item[(iv)] the $2.\mathfrak{A}_5^{nst}$-representation $\mathrm{Sym}^3(\mathbb{U}_4^\vee)$
does not contain one-dimensional subrepresentations;

\item[(v)] the $\mathfrak{A}_5^{nst}$-representation $\mathrm{Sym}^4(\mathbb{U}_4^\vee)$
has a unique two-dimensional subrepresentation, and this subrepresentation
splits into a sum of two trivial representations of~$\mathfrak{A}_5^{nst}$.
\end{itemize}
\end{corollary}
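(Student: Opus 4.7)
The plan is to derive all five assertions by direct character computation from Table~\ref{table:characters} together with the Newton-type formulas
\begin{align*}
\chi_{\mathrm{Sym}^2 V}(g)&=\tfrac{1}{2}\bigl(\chi_V(g)^2+\chi_V(g^2)\bigr),\\
\chi_{\mathrm{Sym}^3 V}(g)&=\tfrac{1}{6}\bigl(\chi_V(g)^3+3\chi_V(g)\chi_V(g^2)+2\chi_V(g^3)\bigr),\\
\chi_{\mathrm{Sym}^4 V}(g)&=\tfrac{1}{24}\bigl(\chi_V(g)^4+6\chi_V(g)^2\chi_V(g^2)+3\chi_V(g^2)^2+8\chi_V(g)\chi_V(g^3)+6\chi_V(g^4)\bigr),
\end{align*}
together with $\chi_{\mathrm{Sym}^k V^\vee}=\overline{\chi_{\mathrm{Sym}^k V}}$. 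All the inputs are read off Table~\ref{table:characters}: the character values $\chi_{\mathbb{U}_4}$, the power maps on conjugacy classes (encoded in the order and cycle type columns), and the number of elements of each class contained in each subgroup.

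For parts (i), (ii), and (iv) I would verify that the trivial representation does not occur in $\mathrm{Sym}^k\mathbb{U}_4^\vee|_\Gamma$ by computing
\[
\frac{1}{|\Gamma|}\sum_{[g]\subset\Gamma}|[g]|\,\overline{\chi_{\mathrm{Sym}^k\mathbb{U}_4}(g)}
\]
for $(k,\Gamma)\in\{(2,\mathfrak{A}_6),(4,\mathfrak{A}_6),(3,2.\mathfrak{A}_5^{nst})\}$ and checking that each sum vanishes. The centre $z\in 2.\mathfrak{S}_6$ acts on $\mathrm{Sym}^k\mathbb{U}_4$ by $(-1)^k$ (since $\chi_{\mathbb{U}_4}(z)=-4$), so for (iv) the character $\chi_{\mathrm{Sym}^3\mathbb{U}_4}$ reverses sign between a class and its $z$-translate, a bookkeeping point to be respected when summing over $2.\mathfrak{A}_5^{nst}$.

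For parts (iii) and (v) the characters of $\mathrm{Sym}^2\mathbb{U}_4^\vee$ and $\mathrm{Sym}^4\mathbb{U}_4^\vee$ are invariant under $z$ and so factor through $\mathfrak{A}_5^{nst}\cong\mathfrak{A}_5$; only the five classical irreducible characters of $\mathfrak{A}_5$ of dimensions $1,3,3,4,5$ enter the decomposition. For (iii), the inner products $\langle\chi_{\mathrm{Sym}^2\mathbb{U}_4^\vee},\chi\rangle$ against the two three-dimensional and the four-dimensional irreducibles should each come out equal to $1$, totalling $3+3+4=10=\dim\mathrm{Sym}^2\mathbb{U}_4$, and the two three-dimensional constituents are automatically non-isomorphic since they appear with multiplicity one separately. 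For (v), the analogous computation should yield $\langle\chi_{\mathrm{Sym}^4\mathbb{U}_4^\vee},1\rangle=2$; because $\mathfrak{A}_5$ has no irreducible two-dimensional representation, this multiplicity forces the isotypic component of the trivial character to be the unique two-dimensional invariant subspace, and it automatically splits as a sum of two trivials.

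The main obstacle is organisational rather than conceptual: one must correctly identify which conjugacy classes of $2.\mathfrak{S}_6$ meet each subgroup non-trivially, account for further splittings inside the subgroup, and apply the power maps consistently across split classes (in particular for the pairs of classes of orders $3$ and $6$ arising from cycle types $[3]$ and $[3,3]$, whose behaviour inside $2.\mathfrak{A}_5^{nst}$ is dictated by Table~\ref{table:characters}). Once the data are arranged in a table, each of (i)--(v) reduces to a short arithmetic verification, which is exactly what the authors carried out with GAP.
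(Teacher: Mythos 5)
Your plan is correct and is essentially the paper's own argument: the authors dispose of this corollary with the one-line remark that it follows from direct computation (performed in GAP), and your character-theoretic setup -- the symmetric-power character formulas, the power maps read off from Table~\ref{table:characters}, and the inner products against the irreducible characters of $\mathfrak{A}_5$ and $\mathfrak{A}_6$ -- is exactly the computation that needs to be carried out. The only point worth making explicit is that for (i), (ii), (iv) the vanishing of $\langle\chi,1\rangle$ suffices because $\mathfrak{A}_6$ and $2.\mathfrak{A}_5$ are perfect, so the trivial representation is their only one-dimensional one (and for (iv) one can even skip the sum: $z$ acts by $-1$ on $\mathrm{Sym}^3\mathbb{U}_4$ but by $+1$ on any one-dimensional representation of the perfect group $2.\mathfrak{A}_5^{nst}$).
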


Recall that all representations of a symmetric group are self-dual.
Therefore, to study invariant hypersurfaces in $\mathbb{P}(\mathbb{W}_5)$
we will use the following result.

\begin{corollary}\label{corollary:Sym-W5}
Let $\Gamma$ be one of the groups
$\mathfrak{S}_6$, $\mathfrak{A}_6$ or $\mathfrak{S}_5^{nst}$. Then
\begin{itemize}
\item[(i)]
the $\Gamma$-representation $\mathrm{Sym}^2(\mathbb{W}_5)$
has a unique one-dimensional subrepresentation;
\item[(ii)]
the $\Gamma$-representation $\mathrm{Sym}^4(\mathbb{W}_5)$
has a unique two-dimensional subrepresentation, and this subrepresentation
splits into a sum of two trivial
representations of~$\Gamma$.
\end{itemize}
\end{corollary}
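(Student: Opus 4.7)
The plan is to reduce both assertions to character-theoretic computations in the same spirit as Corollary~\ref{corollary:Sym-U4}. Write $\chi=\chi_{\mathbb{W}_5}$ for the character recorded in the fourth column of Table~\ref{table:characters}. The standard identities
\begin{equation*}
\chi_{\mathrm{Sym}^2(\mathbb{W}_5)}(g)=\tfrac12\bigl(\chi(g)^2+\chi(g^2)\bigr),
\end{equation*}
\begin{equation*}
\chi_{\mathrm{Sym}^4(\mathbb{W}_5)}(g)=\tfrac{1}{24}\bigl(\chi(g)^4+6\chi(g)^2\chi(g^2)+3\chi(g^2)^2+8\chi(g)\chi(g^3)+6\chi(g^4)\bigr),
\end{equation*}
combined with the cycle-type column of Table~\ref{table:characters} (which determines how squaring, cubing and fourth-power act on the conjugacy classes of $\mathfrak{S}_6$), let me evaluate both characters on every conjugacy class of $\Gamma$. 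The multiplicity of each one-dimensional character of $\Gamma$ in these representations is then the corresponding inner product.

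The lower bounds for these multiplicities are visible without any computation. The restriction of $\sum_{i=0}^{5}x_i^2$ to the hyperplane $\mathbb{W}_5=\{\sum x_i=0\}\subset\mathbb{W}$ is a nonzero $\mathfrak{S}_6$-invariant element of $\mathrm{Sym}^2(\mathbb{W}_5)$, and the restrictions of $\sum_{i=0}^{5}x_i^4$ and $\bigl(\sum_{i=0}^{5}x_i^2\bigr)^2$ are two linearly independent $\mathfrak{S}_6$-invariants in $\mathrm{Sym}^4(\mathbb{W}_5)$; the latter statement is precisely the fact, used in the Introduction, that every $\mathfrak{S}_6$-invariant quartic in $\mathbb{P}(\mathbb{W})$ appears in the pencil $X_t$. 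These polynomials remain $\Gamma$-invariant for each of the three groups in question, so the trivial character occurs at least once in $\mathrm{Sym}^2(\mathbb{W}_5)$ and at least twice in $\mathrm{Sym}^4(\mathbb{W}_5)$. Only the matching upper bounds and the vanishing of the sign character (for $\Gamma=\mathfrak{S}_6$ and $\Gamma=\mathfrak{S}_5^{nst}$) need the explicit character calculation.

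The uniqueness of the two-dimensional subrepresentation in part~(ii) is automatic once the count of trivial summands in $\mathrm{Sym}^4(\mathbb{W}_5)$ is established, because none of the groups $\mathfrak{S}_6$, $\mathfrak{A}_6$, $\mathfrak{S}_5^{nst}$ admits an irreducible two-dimensional representation; hence any two-dimensional subrepresentation of $\mathrm{Sym}^4(\mathbb{W}_5)$ must decompose as a sum of two one-dimensional subrepresentations, and we already know from~(i)-style analysis that these can only be trivial. The whole argument thus collapses to computing at most four inner products in each of the three groups, which can be carried out by hand or, as in Corollary~\ref{corollary:Sym-U4}, by a short call to GAP. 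I do not anticipate any conceptual obstacle; the only mildly delicate bookkeeping is the passage from $\mathfrak{S}_6$-conjugacy classes to those of $\mathfrak{A}_6$ (some classes split) and the correct identification of the squaring map on elements of orders $4$ and $6$, both of which are already encoded in Table~\ref{table:characters}.
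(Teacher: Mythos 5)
Your proposal is correct and is essentially the paper's own argument: the paper proves this corollary by exactly the kind of direct character computation (inner products with the one-dimensional characters, carried out in GAP) that you describe, and your supplementary observations --- the explicit invariants $\sum x_i^2$, $\sum x_i^4$, $\bigl(\sum x_i^2\bigr)^2$ giving the lower bounds, and the absence of irreducible two-dimensional representations of $\mathfrak{S}_6$, $\mathfrak{A}_6$, $\mathfrak{S}_5$ forcing the uniqueness in (ii) --- are all sound. The only cosmetic caveat is that the linear independence of the two restricted quartics should be checked directly (e.g.\ by evaluating at two points of the hyperplane) rather than quoted from the Introduction, whose assertion that every invariant quartic is some $X_t$ is logically downstream of this corollary.
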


We conclude this section by recalling some information about
several subgroups of~\mbox{$2.\mathfrak{S}_6$} that are smaller than those
listed in Table~\ref{table:characters}. Namely, we list in
Table~\ref{table:A5-nst} orders, types and numbers of elements
in certain subgroups of $2.\mathfrak{A}_5^{nst}$. We keep the notation
used in Table~\ref{table:characters}.
By $2.\mathfrak{S}_3^\prime$ we denote a subgroup of~\mbox{$2.\mathfrak{A}_5^{nst}$}
isomorphic to~\mbox{$2.\mathfrak{S}_3$}.
Note that the preimage in $2.\mathfrak{S}_6$ of
any subgroup~\mbox{$\mumu_5\subset\mathfrak{S}_6$} is isomorphic to~$\mumu_{10}$.

\begin{table}[h]
\centering
\caption{Subgroups of $2.\mathfrak{A}_5^{nst}$\label{table:A5-nst}}
\begin{tabular}{|c|c||c|c|c|c|}
\hline
$\mathrm{ord}$ & type &
$2.\mathrm{D}_{10}$ &
$2.\mathfrak{S}_3^\prime$ &
$2.(\mumu_2\times\mumu_2)$ & $\mumu_{10}$ \vstrut\\
\hline
$1$ & $\mathrm{id}$ & $1$ & $1$ & $1$ & $1$ \vstrut\\
\hline
$2$ & $z$ & $1$ & $1$ & $1$ & $1$ \vstrut\\
\hline
$4$ & $[2,2]$ & $10$ & $6$ & $6$ & $0$ \vstrut\\
\hline
$6$ & $[3,3]$ & $0$ & $2$ & $0$ & $0$ \vstrut\\
\hline
$3$ & $[3,3]$ & $0$ & $2$ & $0$ & $0$ \vstrut\\
\hline
$10$ & $[5]$ & $4$ & $0$ & $0$ & $4$ \vstrut\\
\hline
$5$ & $[5]$ & $4$ & $0$ & $0$ & $4$ \vstrut\\
\hline
\end{tabular}
\end{table}

Looking at Table~\ref{table:A5-nst} (and keeping in mind
character values provided by Table~\ref{table:characters})
we immediately obtain the following.

\begin{corollary}\label{corollary:A5-reps}
Let $\Gamma$ be a subgroup of $2.\mathfrak{A}_5^{nst}\subset 2.\mathfrak{S}_6$.
After restriction to
$\Gamma$ the $2.\mathfrak{S}_6$-representation~$\mathbb{U}_4$
\begin{itemize}
\item[(i)] splits into a sum of two non-isomorphic
irreducible two-dimensional representations
if $\Gamma$ is a subgroup $2.\mathrm{D}_{10}$;

\item[(ii)] splits into a sum of an
irreducible two-dimensional representation
and two non-isomorphic one-dimensional representations
if $\Gamma$ is a subgroup $2.\mathfrak{S}_3^\prime$;

\item[(iii)] splits into a sum of two isomorphic
irreducible two-dimensional representations
if $\Gamma$ is a subgroup $2.(\mumu_2\times\mumu_2)$;

\item[(iv)] splits into a sum of four pairwise non-isomorphic
one-dimensional representations
if $\Gamma$ is a subgroup $\mumu_{10}$.
\end{itemize}
\end{corollary}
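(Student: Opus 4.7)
My plan is to prove Corollary~\ref{corollary:A5-reps} by the same character-theoretic template already used for Corollary~\ref{corollary:characters}, reading all the input from Tables~\ref{table:characters} and~\ref{table:A5-nst}. For each subgroup $\Gamma$ the first step is to compute the inner product
$$
\langle\chi_{\mathbb{U}_4},\chi_{\mathbb{U}_4}\rangle_\Gamma=\frac{1}{|\Gamma|}\sum_C |C\cap\Gamma|\cdot|\chi(C)|^2,
$$
the sum being over the $2.\mathfrak{S}_6$-conjugacy classes $C$; this equals the sum of squared multiplicities of the distinct irreducible constituents of $\mathbb{U}_4|_\Gamma$. A direct bookkeeping using the two tables yields the values $2$, $3$, $4$ and $4$ in cases (i)--(iv) respectively.

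To promote these numerical constraints into the asserted decompositions I would next pin down the abstract isomorphism type of each $\Gamma$. The presence of order-$4$ elements in Table~\ref{table:A5-nst} forces the central extensions to be non-split, so $2.\mathrm{D}_{10}\cong\mathrm{Dic}_5$, $2.\mathfrak{S}_3'\cong\mathrm{Dic}_3$ and $2.(\mumu_2\times\mumu_2)\cong Q_8$. Since $\chi_{\mathbb{U}_4}(z)=-4$, the element $z$ acts as $-I$, so only those irreducibles of $\Gamma$ that send $z$ to $-I$ can appear. Combining this with the standard character tables of the dicyclic and quaternion groups and with the constraint $\dim\mathbb{U}_4=4$ isolates a unique decomposition in each case: $2+2$ in~(i), because $\mathrm{Dic}_5$ admits no faithful $3$-dimensional irreducible to partner a $1$-dimensional one; $2+1+1$ in~(ii), whose two $1$-dimensional pieces must be the two faithful characters of $\mathrm{Dic}_3$ (coming from the two order-$4$ characters of its cyclic abelianization $\mumu_4$); two copies of the unique faithful $2$-dimensional irreducible of $Q_8$ in~(iii), since $Q_8$ has no non-trivial character of the centre, thereby ruling out the alternative $1+1+1+1$ decomposition with sum of squares $4$; and four pairwise distinct $1$-dimensional characters in~(iv), which is the only option for an abelian group with inner product $4$.

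The main obstacle, such as it is, lies entirely in the bookkeeping: identifying the abstract isomorphism types of $2.\mathrm{D}_{10}$, $2.\mathfrak{S}_3'$ and $2.(\mumu_2\times\mumu_2)$ from the order-$4$ counts, and enumerating which of their irreducibles send $z$ to $-I$. Once this short structural analysis is in place, the four assertions of the corollary fall out mechanically from the four inner-product computations, exactly as the proof of Corollary~\ref{corollary:characters} falls out of its inner-product computations.
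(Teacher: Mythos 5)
Your proposal is correct and follows essentially the same route as the paper, which proves this corollary simply by reading off the character values of $\mathbb{U}_4$ from Table~\ref{table:characters} against the element counts in Table~\ref{table:A5-nst}. Your inner products ($2$, $3$, $4$, $4$) and the identifications $2.\mathrm{D}_{10}\cong\mathrm{Dic}_5$, $2.\mathfrak{S}_3'\cong\mathrm{Dic}_3$, $2.(\mumu_2\times\mumu_2)\cong Q_8$ all check out, and the constraint that $z$ acts as $-I$ correctly eliminates the alternative decompositions; you have merely spelled out the bookkeeping that the paper leaves implicit.
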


\section{Icosahedral group in three dimensions}
\label{section:icosahedron}

In this section, we consider the action of the group
$\mathfrak{A}_5$ on the projective space $\mathbb{P}^3$
arising from a non-standard embedding of $\mathfrak{A}_5\hookrightarrow\mathfrak{S}_6$.
Namely, we identify $\mathbb{P}^3$ with the projectivization~\mbox{$\mathbb{P}(\mathbb{U}_4)$},
where $\mathbb{U}_4$ is the restriction of the four-dimensional irreducible
representation of the group~\mbox{$2.\mathfrak{S}_6$}
introduced in Section~\ref{section:representations} to a
subgroup~$2.\mathfrak{A}_5^{nst}$ (which we will refer to just as
$2.\mathfrak{A}_5$ in this section).
Recall from Corollary~\ref{corollary:characters}(i) that $\mathbb{U}_4$ is
an irreducible representation
of~\mbox{$2.\mathfrak{A}_5$}.
Moreover, this is the unique faithful
four-dimensional irreducible representation of the group~$2.\A_5$
(see e.\,g.~\cite[p.~2]{Atlas}).

\begin{remark}[{see e.\,g.~\cite[p.~2]{Atlas}}]
\label{remark:A5-subgroups}
Let $\Gamma$ be a proper subgroup of $\mathfrak{A}_5$ such that the index of $\Gamma$ is at most $15$.
Then $\Gamma$ is isomorphic either to $\mathfrak{A}_4$, or to $\mathrm{D}_{10}$,
or to $\mathfrak{S}_3$, or to $\mumu_5$, or to $\mumu_2\times\mumu_2$.
In particular, if $\mathfrak{A}_5$ acts transitively on the set of $r<15$ elements,
then~\mbox{$r\in\{5,6,10,12\}$}.
\end{remark}

\begin{lemma}
\label{lemma:A5-small-orbits}
Let $\Omega$ be an $\mathfrak{A}_5$-orbit of length $r\leqslant 15$ in $\mathbb{P}^3$.
Then either $r=10$, or $r=12$.
Moreover, $\mathbb{P}^3$ contains exactly two $\mathfrak{A}_5$-orbits of length
$10$ and exactly two $\mathfrak{A}_5$-orbits of length $12$.
\end{lemma}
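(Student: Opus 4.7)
The strategy is to combine Remark~\ref{remark:A5-subgroups} with the branching information for $\mathbb{U}_4$ collected in Corollaries~\ref{corollary:characters} and~\ref{corollary:A5-reps}. By the orbit--stabilizer theorem, an orbit $\Omega$ of length $r\leqslant 15$ has stabilizer $H\subset\mathfrak{A}_5$ of index $r$, so $r\in\{1,5,6,10,12,15\}$ by the remark. A point of $\mathbb{P}(\mathbb{U}_4)$ with $\mathfrak{A}_5$-stabilizer $H$ is the same datum as a one-dimensional subrepresentation of $\mathbb{U}_4$ for the preimage $\widetilde{H}\subset 2.\mathfrak{A}_5^{nst}$. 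So the plan is: rule out each $r$ for which the corresponding $\widetilde{H}$-module $\mathbb{U}_4$ admits no one-dimensional summand, and for the surviving values of $r$ count orbits by counting (summand, conjugate subgroup) pairs.

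The impossible values are disposed of quickly. For $r=1$, irreducibility of $\mathbb{U}_4|_{2.\mathfrak{A}_5^{nst}}$ (Corollary~\ref{corollary:characters}(i)) forbids invariant lines. For $r=5$ the preimage of the stabilizer $\mathfrak{A}_4$ is $2.\mathfrak{A}_4^{nst}$, whose restriction of $\mathbb{U}_4$ is a sum of two irreducible two-dimensional pieces (Corollary~\ref{corollary:characters}(ii)); likewise parts (i) and (iii) of Corollary~\ref{corollary:A5-reps} handle $r=6$ (stabilizer $\mathrm{D}_{10}$, preimage $2.\mathrm{D}_{10}$) and $r=15$ (stabilizer $\mumu_2\times\mumu_2$, preimage $2.(\mumu_2\times\mumu_2)$). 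In each of these cases the restriction splits into irreducible summands of dimension $\geqslant 2$, so no one-dimensional subrepresentation exists and no orbit of that length arises.

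For $r=10$ the stabilizer $\mathfrak{S}_3$ has preimage $2.\mathfrak{S}_3^\prime$, and Corollary~\ref{corollary:A5-reps}(ii) yields exactly two one-dimensional summands of $\mathbb{U}_4$, hence two $\mathfrak{S}_3$-fixed points in $\mathbb{P}^3$. Since $\mathfrak{S}_3$ is self-normalizing in $\mathfrak{A}_5$ there are $60/6=10$ conjugate $\mathfrak{S}_3$-subgroups, and every $\mathfrak{S}_3$-fixed point has stabilizer exactly~$\mathfrak{S}_3$, because the only overgroups $\mathfrak{A}_4,\mathfrak{A}_5$ have just been shown to fix nothing; this gives $(10\cdot 2)/10=2$ orbits of length~$10$. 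The analogous count for $r=12$ uses Corollary~\ref{corollary:A5-reps}(iv): each of the six copies of $\mumu_5$ in $\mathfrak{A}_5$ lifts to $\mumu_{10}\subset 2.\mathfrak{A}_5^{nst}$ and contributes four fixed points in $\mathbb{P}^3$, and these stabilizers cannot enlarge to $\mathrm{D}_{10}$ or $\mathfrak{A}_5$ by the previous step, yielding $(6\cdot 4)/12=2$ orbits of length~$12$. The only step that requires vigilance is precisely this last piece of bookkeeping: one must be sure that a point fixed by a candidate $\mathfrak{S}_3$ or $\mumu_5$ cannot be fixed by a larger subgroup, which is exactly why the ruling out of the smaller orbit lengths must be completed first.
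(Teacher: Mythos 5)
Your proof is correct and follows essentially the same route as the paper's: reduce to $r\in\{1,5,6,10,12,15\}$ via Remark~\ref{remark:A5-subgroups}, kill $r=1,5,6,15$ by the absence of one-dimensional summands in the relevant restrictions of $\mathbb{U}_4$, and count the orbits of lengths $10$ and $12$ from Corollary~\ref{corollary:A5-reps}(ii),(iv) together with the number of conjugates of $\mathfrak{S}_3$ and $\mumu_5$. Your explicit bookkeeping (checking that the stabilizers cannot be enlarged) is exactly the point the paper leaves implicit; the only slip is that $\mathfrak{A}_4$ is not an overgroup of $\mathfrak{S}_3$ in $\mathfrak{A}_5$ (it has no subgroup of order $6$), but this is harmless since it only shortens the list of cases to exclude.
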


\begin{proof}
By Remark~\ref{remark:A5-subgroups}
one has $r\in\{1,5,6,10,12,15\}$.
The case $r=1$ is impossible since~$\mathbb{U}_4$
is an irreducible $2.\mathfrak{A}_5$-representation.
Restricting $\mathbb{U}_4$ to subgroups of $2.\mathfrak{A}_5$ isomorphic to
$2.\mathfrak{A}_4$, $2.\mathrm{D}_{10}$,
and~\mbox{$2.(\mumu_2\times\mumu_2)$}, and applying
Corollaries~\ref{corollary:characters}(ii)
and~\ref{corollary:A5-reps}(i),(iii),
we see that~\mbox{$r\not\in\{5,6,15\}$}.

Restricting $\mathbb{U}_4$ to a subgroup of $2.\mathfrak{A}_5$ isomorphic
to $2.\mathfrak{S}_3$, applying Corollary~\ref{corollary:A5-reps}(ii)
and keeping in mind that
there are ten subgroups isomorphic to $\mathfrak{S}_3$ in $\mathfrak{A}_5$,
we see that $\mathbb{P}^3$ contains exactly two $\mathfrak{A}_5$-orbits of length $10$.

Finally, restricting $\mathbb{U}_4$ to a subgroup
of $2.\mathfrak{A}_5$ isomorphic to $\mumu_{10}$,
applying Corollary~\ref{corollary:A5-reps}(iv) and keeping in mind that
there are six subgroups isomorphic to $\mumu_{5}$ in $\mathfrak{A}_5$,
we see that $\mathbb{P}^3$ contains exactly two $\mathfrak{A}_5$-orbits of length $12$.
\end{proof}

\begin{lemma}
\label{lemma:P3-S5-invariants}
There are no $\mathfrak{A}_5$-invariant surfaces of degree at most three in $\mathbb{P}^3$.
\end{lemma}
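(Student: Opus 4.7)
The plan is to translate the geometric statement into representation theory and then invoke the results already collected in Section~\ref{section:representations}. An $\mathfrak{A}_5$-invariant surface of degree $d$ in $\mathbb{P}^3=\mathbb{P}(\mathbb{U}_4)$ corresponds to an $\mathfrak{A}_5$-invariant line in $\mathrm{Sym}^d(\mathbb{U}_4^\vee)$, which is the same as a one-dimensional $2.\mathfrak{A}_5^{nst}$-subrepresentation of $\mathrm{Sym}^d(\mathbb{U}_4^\vee)$ (the central element~$z$ acts as $(-1)^d$ on the whole space, which is a scalar and so is harmless in projective space). Thus it suffices to rule out one-dimensional subrepresentations for $d=1,2,3$.

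For $d=1$, the space $\mathbb{U}_4^\vee$ is the dual of $\mathbb{U}_4$, and Corollary~\ref{corollary:characters}(i) says that $\mathbb{U}_4$ (and hence $\mathbb{U}_4^\vee$) stays irreducible on restriction to $2.\mathfrak{A}_5^{nst}$. In particular, it contains no one-dimensional subrepresentation, so there is no $\mathfrak{A}_5$-invariant hyperplane in $\mathbb{P}^3$.

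For $d=2$, Corollary~\ref{corollary:Sym-U4}(iii) describes $\mathrm{Sym}^2(\mathbb{U}_4^\vee)$ as an $\mathfrak{A}_5^{nst}$-representation: it is the direct sum of two distinct irreducible three-dimensional summands and one irreducible four-dimensional summand. None of these is one-dimensional, so there is no $\mathfrak{A}_5$-invariant quadric surface. For $d=3$, Corollary~\ref{corollary:Sym-U4}(iv) asserts directly that $\mathrm{Sym}^3(\mathbb{U}_4^\vee)$ contains no one-dimensional subrepresentation of $2.\mathfrak{A}_5^{nst}$, and this rules out the existence of an $\mathfrak{A}_5$-invariant cubic surface.

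There is no substantive obstacle: once one accepts the dictionary between invariant hypersurfaces and invariant lines in symmetric powers, the proof is a three-line bookkeeping argument that combines Corollary~\ref{corollary:characters}(i) with parts (iii) and (iv) of Corollary~\ref{corollary:Sym-U4}. The only point requiring a moment of care is the comment about the central element, which shows that we lose nothing by working with $2.\mathfrak{A}_5^{nst}$-representations instead of genuine $\mathfrak{A}_5$-representations.
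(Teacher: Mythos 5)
Your proposal is correct and is essentially the paper's own argument: the paper's proof consists of the single line ``Apply Corollary~\ref{corollary:Sym-U4}(iii),(iv)'', and you have simply spelled out the standard dictionary between invariant hypersurfaces and one-dimensional subrepresentations of symmetric powers (plus the degree-one case via irreducibility of $\mathbb{U}_4$, which the paper leaves implicit). No differences of substance.
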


\begin{proof}
Apply Corollary~\ref{corollary:Sym-U4}(iii),(iv).
\end{proof}

By Corollary~\ref{corollary:characters}(ii), the subgroup
$\mathfrak{A}_4\subset\mathfrak{A}_5$ leaves invariant two disjoint lines
in $\mathbb{P}^3$, say $L_1$ and $L_1^\prime$.
Let $L_1,\ldots,L_5$ be the $\mathfrak{A}_5$-orbit of the line $L_1$,
and let $L_1^\prime,\ldots,L_5^\prime$
be the $\mathfrak{A}_5$-orbit of the line $L_1^\prime$.

\begin{lemma}\label{lemma:5-disjoint}
The lines $L_1,\ldots,L_5$ (respectively,
the lines $L_1^\prime,\ldots,L_5^\prime$) are pairwise disjoint.
\end{lemma}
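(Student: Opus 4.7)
The plan is to realize each line $L_i\subset\mathbb{P}^3=\mathbb{P}(\mathbb{U}_4)$ as a point $[v_i]$ of the Grassmannian $\mathrm{Gr}(2,\mathbb{U}_4)\subset\mathbb{P}(\Lambda^2\mathbb{U}_4)$ corresponding to the two-dimensional subspace $V_i\subset\mathbb{U}_4$ with $L_i=\mathbb{P}(V_i)$, and to exploit the non-degenerate symmetric bilinear form
\[
\omega\colon\Lambda^2\mathbb{U}_4\otimes\Lambda^2\mathbb{U}_4\longrightarrow\Lambda^4\mathbb{U}_4\cong\mathbb{C}
\]
given by the wedge product. This is the form preserved by the cover $\nabla\colon\mathrm{SL}_4(\mathbb{C})\to\mathrm{SO}_6(\mathbb{C})$ recalled in Section~\ref{section:representations}; since the central element $z$ acts trivially on $\Lambda^2\mathbb{U}_4$, the form $\omega$ descends to an $\mathfrak{A}_5$-invariant object, and its zero locus in $\mathbb{P}(\Lambda^2\mathbb{U}_4)$ is precisely the Grassmannian. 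The key fact I would use is that $L_i\cap L_j\neq\emptyset$ if and only if $V_i\cap V_j\neq 0$, if and only if $\omega(v_i,v_j)=0$.

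Before applying this, I would verify that the orbit $\{L_1,\ldots,L_5\}$ genuinely has length five. The stabilizer of $L_1$ contains $\mathfrak{A}_4$ and cannot equal $\mathfrak{A}_5$, since otherwise $V_1$ would be a two-dimensional $2.\mathfrak{A}_5$-invariant subspace of $\mathbb{U}_4$, contradicting Corollary~\ref{corollary:characters}(i). Hence $\mathfrak{A}_5$ acts on $\{L_1,\ldots,L_5\}$ as on five letters, and this action is two-transitive. In particular the condition ``$L_i\cap L_j\neq\emptyset$'' is uniform across unordered pairs with $i\neq j$, so either no two of the lines meet or every two of them do, and only the latter possibility must be excluded.

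Suppose for contradiction that every two of the lines meet. Then $\omega(v_i,v_j)=0$ for all $i,j\in\{1,\ldots,5\}$, the diagonal entries vanishing because each $[v_i]$ lies on the Grassmannian. The linear span $W\subseteq\Lambda^2\mathbb{U}_4$ of $\{v_1,\ldots,v_5\}$ is therefore totally $\omega$-isotropic. However $W$ is an $\mathfrak{A}_5$-subrepresentation of $\Lambda^2\mathbb{U}_4\cong\mathbb{I}\oplus\mathbb{W}_5$, and by Corollary~\ref{corollary:characters-P4}(i) any such subrepresentation has dimension $0$, $1$, $5$, or $6$. The option $\dim W=1$ is ruled out because the five lines $L_i$ are pairwise distinct, while $\dim W\in\{5,6\}$ is incompatible with the Witt index of a non-degenerate symmetric form on a six-dimensional space being equal to $3$. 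This contradiction establishes the pairwise disjointness of $L_1,\ldots,L_5$, and the verbatim argument applied with the subspaces $V_i'\subset\mathbb{U}_4$ handles $L_1',\ldots,L_5'$. I expect the main bookkeeping hurdle to be this final step, matching the admissible dimensions of $\mathfrak{A}_5$-subrepresentations of $\Lambda^2\mathbb{U}_4$ against the Witt index of $\omega$; everything else is direct representation theory and the standard Plücker picture.
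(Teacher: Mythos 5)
Your argument is correct. The paper's own proof shares your opening reduction --- the stabilizer of $L_1$ in $\mathfrak{A}_5$ is exactly $\mathfrak{A}_4$, the induced action on $\{L_1,\ldots,L_5\}$ is doubly transitive, so one only has to exclude the case where every two of the lines meet --- but it finishes more elementarily: five pairwise-intersecting lines in $\mathbb{P}^3$ are either all coplanar or all concurrent, and either alternative produces a $2.\mathfrak{A}_5$-invariant hyperplane or point of $\mathbb{P}(\mathbb{U}_4)$, contradicting the irreducibility of $\mathbb{U}_4$ from Corollary~\ref{corollary:characters}(i). You instead pass to the Pl\"ucker quadric: the Gram matrix of $v_1,\ldots,v_5$ with respect to the wedge pairing vanishes, so their span $W$ is totally isotropic and hence of dimension at most $3$, while $W$ is a subrepresentation of $\Lambda^2\mathbb{U}_4\cong\mathbb{I}\oplus\mathbb{W}_5$ whose dimension must be $1$, $5$ or $6$ by Corollary~\ref{corollary:characters-P4}(i); dimension $1$ would force the five lines to coincide, so you reach the same contradiction. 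Your route trades the classical synthetic fact for the irreducibility of $\mathbb{W}_5$ together with the Witt-index bound; it is slightly heavier in machinery but self-contained, and in fact the coplanar/concurrent dichotomy the paper invokes is exactly the geometric content of a large isotropic subspace consisting of decomposable tensors (the two rulings of the Grassmannian by planes), so the two proofs are close cousins. One small remark: the $\mathfrak{A}_5$-invariance of $\omega$, which you take care to justify via the cover $\nabla$, is not actually needed anywhere --- invariance of $W$ follows from the permutation of the lines alone, and total isotropy is pure linear algebra.
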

\begin{proof}
Suppose that some of the lines $L_1,\ldots,L_5$ have a common point.
Since the action of $\mathfrak{A}_5$ on the set $\{L_1,\ldots,L_5\}$ is doubly
transitive, this implies that every two of the lines $L_1,\ldots,L_5$
have a common point.
Therefore, either all lines $L_1,\ldots,L_5$ are coplanar,
or all of them pass through one point. Both of these cases are
impossible since the
$2.\mathfrak{A}_5$-representation $\mathbb{U}_4$ is irreducible
by Corollary~\ref{corollary:characters}(i).
Therefore, the lines $L_1,\ldots,L_5$ are pairwise disjoint.
The same argument applies to the lines $L_1^\prime,\ldots,L_5^\prime$.
\end{proof}

\begin{corollary}\label{corollary:five-lines-orbits}
Any $\mathfrak{A}_5$-orbit contained in the union $L_1\cup\ldots\cup L_5$
has length at least~$20$.
\end{corollary}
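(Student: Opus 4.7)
The plan is a counting argument combined with a representation-theoretic identification. Because $\mathfrak{A}_5$ acts transitively on the set $\{L_1,\ldots,L_5\}$, all intersections $\Omega\cap L_i$ have the same cardinality $k$, so $|\Omega|=5k$; it therefore suffices to show that $k\geq 4$.

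First I would argue that $\Omega\cap L_1$ is a single orbit for $\mathrm{Stab}_{\mathfrak{A}_5}(L_1)$, which is a subgroup of index $5$ in $\mathfrak{A}_5$ and hence isomorphic to $\mathfrak{A}_4$. Indeed, if $p,q\in\Omega\cap L_1$ and some $g\in\mathfrak{A}_5$ sends $p$ to $q$, then $g\cdot L_1$ is one of $L_1,\ldots,L_5$ and meets $L_1$ at the point $q$; by the pairwise disjointness furnished by Lemma~\ref{lemma:5-disjoint} this forces $g\cdot L_1=L_1$, so $g$ lies in $\mathrm{Stab}_{\mathfrak{A}_5}(L_1)$. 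Consequently, $k$ is the length of some $\mathfrak{A}_4$-orbit on $L_1\cong\mathbb{P}^1$, and the problem reduces to classifying such orbits.

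The main step is then to identify this $\mathfrak{A}_4$-action on $L_1$. By Corollary~\ref{corollary:characters}(ii) the restriction of $\mathbb{U}_4$ to $2.\mathfrak{A}_4^{nst}$ splits as $V\oplus V'$ with $V,V'$ non-isomorphic irreducible two-dimensional representations; this is exactly what produces the two $\mathfrak{A}_4$-invariant lines $L_1$ and $L_1'$, so one may take $L_1=\mathbb{P}(V)$. The central element $z$ acts on $\mathbb{U}_4$ by $-1$, so it acts trivially on $L_1$, and the induced $\mathfrak{A}_4$-action on $L_1$ is furthermore faithful: the only normal subgroup of the binary tetrahedral group $2.\mathfrak{A}_4$ that strictly contains its centre is the quaternion preimage of the Klein four subgroup of $\mathfrak{A}_4$, and that subgroup is non-abelian, so it cannot act by scalars on the irreducible $V$. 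Hence $\mathfrak{A}_4$ sits in $\mathrm{PGL}(V)=\mathrm{PGL}_2(\mathbb{C})$ as the classical tetrahedral group, whose orbits on $\mathbb{P}^1$ have lengths $4$, $4$, $6$ and $12$. Therefore $k\geq 4$ and $|\Omega|\geq 20$.

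I do not foresee any serious obstacle: the argument assembles the representation-theoretic data from Section~\ref{section:representations} with the geometric input of pairwise disjointness from Lemma~\ref{lemma:5-disjoint}. The single slightly delicate point is verifying faithfulness of the induced $\mathfrak{A}_4$-action on $L_1$, which is handled by a brief inspection of the normal-subgroup lattice of the binary tetrahedral group; once that is in place, the orbit-length list for the tetrahedral action on $\mathbb{P}^1$ is classical, and the bound $|\Omega|\geq 20$ follows immediately.
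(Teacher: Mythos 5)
Your argument is correct and is essentially the paper's own proof: the stabilizer $\mathfrak{A}_4$ of $L_1$ acts faithfully on $L_1\cong\mathbb{P}^1$, hence every $\mathfrak{A}_4$-orbit on $L_1$ has length at least $4$, and Lemma~\ref{lemma:5-disjoint} then gives $|\Omega|=5k\geqslant 20$. Your explicit verification of faithfulness via the normal-subgroup lattice of the binary tetrahedral group merely spells out what the paper deduces directly from Corollary~\ref{corollary:characters}(ii), and your observation that $\Omega\cap L_1$ is a single $\mathfrak{A}_4$-orbit, while true, is not needed.
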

\begin{proof}
Corollary~\ref{corollary:characters}(ii) implies that the stabilizer
$\Gamma\cong\mathfrak{A}_4$ of the line $L_1$ acts on $L_1$ faithfully. Therefore,
the length of any $\Gamma$-orbit contained in $L_1$
is at least four. Thus the required assertion follows
from Lemma~\ref{lemma:5-disjoint}.
\end{proof}

We are going to describe the configuration formed
by the lines $L_1,\ldots,L_5,L_1^\prime,\ldots,L_5^\prime$.

\begin{definition}\label{definition:double-five}
Let $T_1,\ldots,T_5,T_1^\prime,\ldots,T_5^\prime$ be different
lines in a projective space. We say that they form a \emph{double
five configuration} if the following conditions hold:
\begin{itemize}
\item the lines $T_1,\ldots,T_5$ (respectively, the lines
$T_1^\prime,\ldots,T_5^\prime$) are pairwise disjoint;

\item for every $i$ the lines $T_i$ and $T_i^\prime$ are disjoint;

\item for every $i\neq j$ the line $T_i$ meets the line $T_j^\prime$.
\end{itemize}
\end{definition}

\begin{lemma}
\label{lemma:double-five}
The lines $L_1,\ldots,L_5,L_1^\prime,\ldots,L_5^\prime$ form a double five configuration. Moreover, the only line in $\mathbb{P}^3$ that intersects all lines
of $L_1,\ldots,L_5$ but $L_i$ is the line $L_i^\prime$,
and the only line in $\mathbb{P}^3$ that intersects all lines
of $L_1^\prime,\ldots,L_5^\prime$ but $L_i^\prime$ is the line $L_i$.
\end{lemma}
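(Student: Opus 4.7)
The plan is to verify the three properties of a double-five configuration in turn; the ``moreover'' uniqueness clause will then come out for free. The pairwise disjointness within each family is Lemma~\ref{lemma:5-disjoint}, and the disjointness $L_i \cap L_i^\prime = \emptyset$ for every $i$ follows from the given case $i=1$ by transporting along $\mathfrak{A}_5$: if $g_i \in \mathfrak{A}_5$ takes $L_1$ to $L_i$, then $g_i \cdot L_1^\prime$ is an $\mathrm{Stab}_{\mathfrak{A}_5}(L_i)$-invariant line of $\mathbb{P}(\mathbb{U}_4)$ lying in the $\mathfrak{A}_5$-orbit of $L_1^\prime$, so by Corollary~\ref{corollary:characters}(ii) it must be $L_i^\prime$.

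The heart of the matter is the crossing relation $L_i \cap L_j^\prime \neq \emptyset$ for $i \neq j$, which I plan to establish together with the uniqueness clause in one stroke. Fix $i$ and set $\Gamma_i := \mathrm{Stab}_{\mathfrak{A}_5}(L_i) \cong \mathfrak{A}_4$. Consider the four pairwise skew lines $\{L_j : j \neq i\}$ together with their set $T$ of common transversals in $\mathbb{P}^3$. Over $\mathbb{C}$ one has $|T| \in \{1, 2, \infty\}$, with $|T|=\infty$ holding exactly when the four lines lie on a common smooth quadric (this is a standard B\'ezout count using the quadric through three of the lines). The set $T$ is $\Gamma_i$-invariant; by Corollary~\ref{corollary:characters}(ii) the only $\Gamma_i$-invariant lines in $\mathbb{P}^3$ are $L_i$ and $L_i^\prime$, and $\Gamma_i \cong \mathfrak{A}_4$ has no orbit of size~$2$ on any set (no index-$2$ subgroup). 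Thus in the finite sub-case every line of $T$ is $\Gamma_i$-fixed, so $T \subset \{L_i, L_i^\prime\}$; since $L_i$ is skew to each $L_j$ by Lemma~\ref{lemma:5-disjoint}, one has $L_i \notin T$, leaving $T = \{L_i^\prime\}$, which simultaneously yields the crossing relation and the uniqueness.

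The step I expect to be most delicate is ruling out the sub-case $|T|=\infty$. In that sub-case the common smooth quadric $Q$ is $\Gamma_i$-invariant by uniqueness. The faithful tetrahedral action of $\Gamma_i$ on $L_i \cong \mathbb{P}^1$ has neither fixed points nor orbits of size~$2$, so the length-$2$ scheme $L_i \cap Q$ (B\'ezout) cannot be $\Gamma_i$-invariant unless $L_i \subset Q$; similarly $L_i^\prime \subset Q$. As $\Gamma_i$ has no index-$2$ subgroup it cannot swap the rulings of $Q$, and the relations $L_i \cap L_j = \emptyset$ and $L_i \cap L_i^\prime = \emptyset$ force all six lines $L_1, \ldots, L_5, L_i^\prime$ into one and the same ruling $R \cong \mathbb{P}^1$ of $Q$. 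The $\Gamma_i$-action on $R$ fixes both points $L_i$ and $L_i^\prime$, hence factors through the abelianisation $\Gamma_i^{\mathrm{ab}} \cong \mumu_3$; in particular the Klein four-subgroup $V_4 \subset \Gamma_i$ acts trivially on $R$. But $V_4$ is transitive on $\{L_j : j \neq i\}$ (the standard $V_4$-action on four letters), so it cannot fix any of those four lines. This contradiction eliminates the infinite sub-case and finishes the proof.
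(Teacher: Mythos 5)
Your proof is correct, and its core coincides with the paper's: both arguments study the common transversals of the four lines $\{L_j\}_{j\neq i}$ and exploit that the stabilizer $\mathfrak{A}_4$ of $L_i$ has no index-two subgroup and leaves invariant only the two lines $L_i$ and $L_i^\prime$ (Corollary~\ref{corollary:characters}(ii)), so that in the finite case the set of transversals collapses to $\{L_i^\prime\}$ --- the paper phrases this as ``two transversals would give three $\mathfrak{A}_4$-fixed lines, contradiction,'' which is the same computation. Where you genuinely diverge is the degenerate case of infinitely many transversals, i.e.\ the four lines lying on a smooth quadric. The paper disposes of this globally: it observes via Lemma~\ref{lemma:P3-S5-invariants} that no quadric is $\mathfrak{A}_5$-invariant, deduces that not all five lines lie on a quadric, and then relabels so that $L_4\not\subset Q_{123}$, finishing by transitivity of $\mathfrak{A}_5$ on the five lines. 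You instead attack an arbitrary quadruple head-on: the $\mathfrak{A}_4$-invariant quadric must contain $L_i$ and $L_i^\prime$ (no orbit of length $\leqslant 2$ on $L_i\cong\mathbb{P}^1$), all six lines land in one ruling, the action on that ruling fixes two points and hence factors through $\mumu_3$, and the Klein subgroup then fixes the four lines it should permute transitively. Your route is longer in this branch but entirely local to the $\mathfrak{A}_4$-stabilizer and does not invoke Lemma~\ref{lemma:P3-S5-invariants} at all; the paper's is shorter but leans on the global non-existence of $\mathfrak{A}_5$-invariant quadrics and a relabeling-plus-transitivity step. One cosmetic remark: the second uniqueness clause (transversals of the primed family) requires rerunning your argument with the two families interchanged, including the infinite sub-case; this is immediate by symmetry but worth a sentence.
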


\begin{proof}
For any $i$ the lines $L_i$ and $L_i^\prime$ are disjoint by construction.
The lines $L_1,\ldots,L_5$ (respectively,
the lines $L_1^\prime,\ldots,L_5^\prime$) are pairwise disjoint
by Lemma~\ref{lemma:5-disjoint}.

Since any three pairwise skew lines in $\mathbb{P}^3$ are contained in a smooth
quadric surface, and an intersection of two different quadric surfaces
in $\mathbb{P}^3$ cannot contain three pairwise skew lines, we see that for
any three indices $1\leqslant i<j<k\leqslant 5$ there is a unique quadric
surface $Q_{ijk}$ in $\mathbb{P}^3$ passing through the lines $L_i$, $L_j$ and $L_k$.
Moreover, the quadric $Q_{ijk}$ is smooth.
Note also that the quadric $Q_{ijk}$ is not $\mathfrak{A}_5$-invariant by
Lemma~\ref{lemma:P3-S5-invariants}. This implies that all five lines
$L_1,\ldots, L_5$ are not contained in a quadric.

Therefore, we may assume that the quadric $Q_{123}$ does not contain
the line $L_4$. It is well-known that in this case either there is a unique
line $L$ meeting all four lines~\mbox{$L_1,\ldots,L_4$}, or
there are exactly two lines $L$ and $L^\prime$ meeting $L_1,\ldots,L_4$.
In the latter case the stabilizer~\mbox{$\Gamma\subset\mathfrak{A}_5$} of the
quadruple $L_1,\ldots,L_4$ (i.e. the stabilizer of the line $L_5$)
preserves the lines $L_5$, $L$ and $L^\prime$. On the other hand,
the lines $L$ and $L^\prime$ are different from $L_5$
since~$L_5$ meets neither
of the lines $L_1,\ldots,L_4$; moreover, the group $\Gamma\cong\mathfrak{A}_4$
fixes both~$L$ and~$L^\prime$. But $\Gamma$ cannot fix three
different lines in $\mathbb{P}^3$ by Corollary~\ref{corollary:characters}(ii).
The contradiction shows that there is a unique line
$L$ meeting $L_1,\ldots,L_4$. Again we see that $L\neq L_5$, so that
$L=L_5^\prime$ by Corollary~\ref{corollary:characters}(ii).

Since the group $\mathfrak{A}_5$ permutes the lines $L_1,\ldots,L_5$ transitively,
we conclude that the only line in $\mathbb{P}^3$ that intersects all lines
of $L_1,\ldots,L_5$ except $L_i$ is the line $L_i^\prime$.
Similarly, we see that the only line in $\mathbb{P}^3$ that intersects all lines
of $L_1^\prime,\ldots,L_5^\prime$ except $L_i^\prime$ is the line $L_i$.
In particular, the lines $L_1,\ldots,L_5,L_1^\prime,\ldots,L_5^\prime$
form a double five configuration.
\end{proof}

\begin{lemma}
\label{lemma:two-twisted-cubics}
Every $\mathfrak{A}_5$-invariant curve of degree at most three in $\mathbb{P}^3$ is a twisted cubic.
Moreover, there are exactly two $\mathfrak{A}_5$-invariant twisted cubic curves
in $\mathbb{P}^3$.
\end{lemma}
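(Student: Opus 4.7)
The plan is to rule out everything except twisted cubics by elimination, and then to parameterise the $\mathfrak{A}_5$-invariant twisted cubics by faithful two-dimensional representations of $2.\mathfrak{A}_5$.

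Since $\mathbb{U}_4$ restricted to $2.\mathfrak{A}_5^{nst}$ is irreducible by Corollary~\ref{corollary:characters}(i), the projective space $\mathbb{P}^3=\mathbb{P}(\mathbb{U}_4)$ contains no $\mathfrak{A}_5$-invariant point, line, or plane. In particular there is no invariant line, no invariant plane conic (which would live in an invariant plane), and no invariant plane cubic. If $C$ is an $\mathfrak{A}_5$-invariant curve of degree at most three that is reducible, then $C$ has at most three irreducible components, each of degree at most two, and $\mathfrak{A}_5$ permutes these components. No component can be individually invariant, since it would then be an invariant line or conic; so every orbit on the set of components has length at least two. But Remark~\ref{remark:A5-subgroups} tells us that any non-trivial transitive $\mathfrak{A}_5$-orbit of size at most $15$ has length at least five, contradicting the bound of three components. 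Therefore $C$ is irreducible, and being non-planar it must be a twisted cubic.

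To count the invariant twisted cubics I would set up the following correspondence. Any $\mathfrak{A}_5$-invariant twisted cubic $C\subset\mathbb{P}^3$ is isomorphic to $\mathbb{P}^1$ and inherits an $\mathfrak{A}_5$-action, which lifts uniquely, since $2.\mathfrak{A}_5$ is perfect and so has no non-trivial one-dimensional characters, to a faithful two-dimensional representation $V$ of $2.\mathfrak{A}_5$. The equivariant embedding $C\hookrightarrow\mathbb{P}^3$ is then forced to be the cubic Veronese $\mathbb{P}(V)\hookrightarrow\mathbb{P}(\mathrm{Sym}^3 V)$, which requires a $2.\mathfrak{A}_5$-equivariant isomorphism $\mathrm{Sym}^3 V\cong\mathbb{U}_4$. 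Conversely each such $V$ yields, by Schur's lemma, a unique invariant twisted cubic, and non-isomorphic $V$'s give distinct curves (otherwise composing the two parameterisations would furnish an $\mathfrak{A}_5$-equivariant isomorphism of projective lines, and hence an isomorphism of the corresponding two-dimensional representations).

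The group $2.\mathfrak{A}_5\cong\mathrm{SL}_2(\mathbb{F}_5)$ has exactly two faithful two-dimensional irreducible representations $V_1,V_2$ (interchanged by the Galois symmetry $\sqrt{5}\mapsto -\sqrt{5}$), and a dimension count shows that it admits a unique faithful four-dimensional irreducible representation. A standard inner product computation shows that $\mathrm{Sym}^3 V_i$ is irreducible for each $i$, so both must coincide with that unique four-dimensional irrep, which in turn is $\mathbb{U}_4|_{2.\mathfrak{A}_5^{nst}}$. This produces exactly two invariant twisted cubics. The main obstacle is really just the character bookkeeping needed to verify that $\chi_{\mathrm{Sym}^3 V_i}=\chi_{\mathbb{U}_4}|_{2.\mathfrak{A}_5^{nst}}$ on every conjugacy class; here one must remember that elements of $2.\mathfrak{A}_5^{nst}$ project to cycle types $[2,2]$, $[3,3]$, and $[5]$ in $\mathfrak{S}_6$ rather than to $[2]$ or $[3]$, so the appropriate rows of Tables~\ref{table:characters} and~\ref{table:A5-nst} must be selected before the traces can be matched.
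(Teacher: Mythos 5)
Your proof is correct, and while the first assertion is handled essentially as in the paper (the paper merely says that irreducibility of the $2.\mathfrak{A}_5$-representation $\mathbb{U}_4$ rules out everything but a twisted cubic; your elimination of reducible curves via orbit lengths of components and Remark~\ref{remark:A5-subgroups} is the intended expansion of that one line), your count of the invariant twisted cubics goes by a genuinely different route. The paper works extrinsically: it invokes Corollary~\ref{corollary:Sym-U4}(iii) to split $\mathrm{Sym}^2(\mathbb{U}_4)$ as $\mathbb{V}_3\oplus\mathbb{V}_3'\oplus\mathbb{V}_4$, attaches to each invariant twisted cubic the three-dimensional net of quadrics through it (which must be $\mathbb{V}_3$ or $\mathbb{V}_3'$), and shows both nets are realized because their base loci must contain invariant curves of degree less than four (using the absence of small $\mathfrak{A}_5$-orbits from Lemma~\ref{lemma:A5-small-orbits}). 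You instead classify intrinsically, via the bijection $C\mapsto V(C)$ with faithful two-dimensional representations of $2.\mathfrak{A}_5$ satisfying $\mathrm{Sym}^3V\cong\mathbb{U}_4$; the points you rely on — uniqueness of the lift since $2.\mathfrak{A}_5$ is perfect, linear normality of the twisted cubic forcing the embedding to be the Veronese, Schur's lemma pinning down the image, and the character check that $\mathrm{Sym}^3V_i$ is the unique faithful four-dimensional irreducible representation for both $i$ — are all sound. The trade-off: the paper's argument packages its computation into the already-stated GAP-verified Corollary~\ref{corollary:Sym-U4}(iii) and as a byproduct exhibits each $\mathscr{C}^i$ as the base locus of an explicit invariant net of quadrics, whereas yours requires the extra $\mathrm{Sym}^3$ character bookkeeping you describe but gives more, namely the identification of the intrinsic $2.\mathfrak{A}_5$-module structure on each cubic (the two curves carry the two Galois-conjugate binary icosahedral representations), a fact the paper implicitly uses later when it speaks of the unique orbit of length $12$ on $\mathscr{C}^i\cong\mathbb{P}^1$.
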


\begin{proof}
Let $C$ be an $\mathfrak{A}_5$-invariant curve of degree at most three in $\mathbb{P}^3$.
Since the \mbox{$2.\mathfrak{A}_5$-representation}~$\mathbb{U}_4$ is irreducible, we conclude that $C$ is a twisted cubic.

By Corollary~\ref{corollary:Sym-U4}(iii), one has
\begin{equation}\label{eq:Sym2-splitting}
\mathrm{Sym}^2(\mathbb{U}_4)\cong\mathbb{V}_3\oplus\mathbb{V}_3^\prime\oplus\mathbb{V}_4,
\end{equation}
where $\mathbb{V}_3$, $\mathbb{V}_3^\prime$, and $\mathbb{V}_4$,
are irreducible representations of the group $\mathfrak{A}_5$
of dimensions~$3$, $3$, and~$4$, respectively.
Note that $\mathbb{V}_3$ and $\mathbb{V}_3^\prime$ are not isomorphic.

Denote by $\mathcal{Q}$ and $\mathcal{Q}^\prime$ the linear systems of quadrics in $\mathbb{P}^3$ that correspond
to $\mathbb{V}_3$ and~$\mathbb{V}_3^\prime$, respectively.
Since $\mathbb{P}^3$ does not contain $\mathfrak{A}_5$-orbits of lengths
less than or equal to eight by Lemma~\ref{lemma:A5-small-orbits},
we see that the base loci of $\mathcal{Q}$ and $\mathcal{Q}^\prime$ contain $\mathfrak{A}_5$-invariant curves
$\mathscr{C}^1$ and $\mathscr{C}^2$, respectively.
The degrees of these curves must be less than four, so that they are twisted cubic curves.
This also implies that the base loci of $\mathcal{Q}$ and $\mathcal{Q}^\prime$ are exactly the curves $\mathscr{C}^1$ and $\mathscr{C}^2$, respectively.

Now take an arbitrary $\mathfrak{A}_5$-invariant twisted cubic curve $C$ in $\mathbb{P}^3$.
The quadrics in $\mathbb{P}^3$ passing through $C$ define a three-dimensional $\mathfrak{A}_5$-subrepresentation in $\mathrm{Sym}^2(\mathbb{U}_4)$.
Moreover, different $\mathfrak{A}_5$-invariant twisted cubics give different $\mathfrak{A}_5$-subrepresentations of $\mathrm{Sym}^2(\mathbb{U}_4)$.
Thus,~\eqref{eq:Sym2-splitting} implies that $C$ coincides either with $\mathscr{C}^1$
or with $\mathscr{C}^2$.
\end{proof}

Keeping in mind Lemma~\ref{lemma:two-twisted-cubics}, we will denote the two
$\mathfrak{A}_5$-invariant twisted cubic curves in $\mathbb{P}^3$ by $\mathscr{C}^1$ and $\mathscr{C}^2$
throughout this section.

\begin{remark}
\label{remark:cubics-disjoint}
The curves $\mathscr{C}^1$ and $\mathscr{C}^2$ are disjoint.
Indeed, otherwise, their intersection would contain at least $12$ points,
which is impossible, since a twisted cubic curve is an intersection of quadrics.
\end{remark}

The lines in $\mathbb{P}^3$ that are tangent to the curves $\mathscr{C}^1$ and $\mathscr{C}^2$
sweep out quartic surfaces $\mathcal{S}^1$ and $\mathcal{S}^2$, respectively.
These surfaces are $\mathfrak{A}_5$-invariant.
The singular loci of $\mathcal{S}^1$ and $\mathcal{S}^2$ are the curves $\mathscr{C}^1$ and $\mathscr{C}^2$, respectively.
In particular, the surfaces $\mathcal{S}^1$ and $\mathcal{S}^2$ are different.
Their singularities along these curves are locally isomorphic to a product of $\mathbb{A}^1$ and an ordinary cusp.

Denote by $\mathcal{P}$ the pencil of quartics in $\mathbb{P}^3$
generated by $\mathcal{S}^1$ and $\mathcal{S}^2$.

\begin{lemma}
\label{lemma:P3-pencil-1}
All $\mathfrak{A}_5$-invariant quartic surfaces in $\mathbb{P}^3$ are contained in
the pencil $\mathcal{P}$.
\end{lemma}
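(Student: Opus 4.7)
The plan is to reduce the claim to a dimension count for the space of $\mathfrak{A}_5$-invariant quartic forms, which is already encoded in Corollary~\ref{corollary:Sym-U4}(v).

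First I would recall that $\mathbb{P}^3=\mathbb{P}(\mathbb{U}_4)$, so an $\mathfrak{A}_5$-invariant quartic surface corresponds (up to scaling) to a non-zero vector in the subspace of $\mathfrak{A}_5$-fixed vectors in $\mathrm{Sym}^4(\mathbb{U}_4^\vee)$. By Corollary~\ref{corollary:Sym-U4}(v), the $\mathfrak{A}_5$-representation $\mathrm{Sym}^4(\mathbb{U}_4^\vee)$ contains a \emph{unique} two-dimensional subrepresentation, and this subrepresentation is isomorphic to a direct sum of two trivial representations. In particular, uniqueness ensures that the full subspace of $\mathfrak{A}_5$-invariants in $\mathrm{Sym}^4(\mathbb{U}_4^\vee)$ has dimension exactly two (any additional invariant vector would enlarge this trivial isotypic component, contradicting uniqueness). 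Hence the set of $\mathfrak{A}_5$-invariant quartic surfaces in $\mathbb{P}^3$ forms a pencil, i.e., is parametrized by a $\mathbb{P}^1$.

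Next I would observe that both $\mathcal{S}^1$ and $\mathcal{S}^2$ are $\mathfrak{A}_5$-invariant quartic surfaces: each is swept out by the tangent lines to one of the $\mathfrak{A}_5$-invariant twisted cubics $\mathscr{C}^1$, $\mathscr{C}^2$ from Lemma~\ref{lemma:two-twisted-cubics}, so $\mathfrak{A}_5$-invariance is automatic from construction, and the fact that they are tangent surfaces of non-degenerate twisted cubics gives them degree $4$. The surfaces $\mathcal{S}^1$ and $\mathcal{S}^2$ are distinct (their singular loci are the disjoint curves $\mathscr{C}^1$ and $\mathscr{C}^2$, respectively), so their defining equations span the two-dimensional space of $\mathfrak{A}_5$-invariants in $\mathrm{Sym}^4(\mathbb{U}_4^\vee)$.

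Combining these two steps, every $\mathfrak{A}_5$-invariant quartic surface in $\mathbb{P}^3$ is cut out by a linear combination of the equations of $\mathcal{S}^1$ and $\mathcal{S}^2$, and therefore belongs to the pencil $\mathcal{P}$. The only potential obstacle is verifying the uniqueness and triviality of the two-dimensional subrepresentation — but that is precisely the content of Corollary~\ref{corollary:Sym-U4}(v), which has already been established by direct character computation, so nothing further is required.
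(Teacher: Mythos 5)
Your proof is correct and follows essentially the same route as the paper, which simply cites Corollary~\ref{corollary:Sym-U4}(v); you have merely spelled out the details of why the uniqueness of the two-dimensional (trivial) subrepresentation of $\mathrm{Sym}^4(\mathbb{U}_4^\vee)$ forces the invariant quartics to form a single pencil, and why $\mathcal{S}^1\neq\mathcal{S}^2$ span it. Nothing is missing.
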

\begin{proof}
This follows from Corollary~\ref{corollary:Sym-U4}(v).
\end{proof}

We proceed by describing the base locus of the pencil $\mathcal{P}$.
This was done in~\cite[Remark~2.6]{CheltsovPrzyjalkowskiShramov}, but we reproduce the details here for the convenience of the reader.

\begin{lemma}
\label{lemma:P3-pencil-base-locus}
The base locus of the pencil $\mathcal{P}$ is an irreducible curve $B$ of degree $16$.
It has $24$ singular points, these points are in a union of two $\mathfrak{A}_5$-orbits of length~$12$,
and each of them is an ordinary cusp of the curve $B$.
The curve $B$ contains a unique $\mathfrak{A}_5$-orbit of length $20$.
\end{lemma}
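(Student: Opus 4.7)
My plan is to combine B\'ezout with the orbit classifications from this section. Since the singular loci of $\mathcal{S}^1$ and $\mathcal{S}^2$ are the disjoint twisted cubics $\mathscr{C}^1,\mathscr{C}^2$ (Remark~\ref{remark:cubics-disjoint}), these quartic surfaces are distinct and irreducible, so $B=\mathcal{S}^1\cap\mathcal{S}^2$ has degree $16$ by B\'ezout and is connected as a complete intersection in $\mathbb{P}^3$.

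To locate the singular points, I would first observe that $\mathscr{C}^1\cap\mathcal{S}^2$ is zero-dimensional: otherwise $\mathscr{C}^1\subset\mathcal{S}^2$, contradicting the fact that tangent lines to $\mathscr{C}^2$ meeting the disjoint cubic $\mathscr{C}^1$ form only a finite-dimensional family, insufficient to cover a rational curve. B\'ezout then gives length $12$, and Lemma~\ref{lemma:A5-small-orbits} identifies this reduced subset with one of the two $\mathfrak{A}_5$-orbits of length $12$ in $\mathbb{P}^3$; call it $\Sigma_1\subset\mathscr{C}^1$. Symmetrically, $\Sigma_2:=\mathcal{S}^1\cap\mathscr{C}^2\subset\mathscr{C}^2$ is the other length-$12$ orbit. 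Both are contained in $B$, and $B$ is singular at each of their points because one defining surface is. For the cusp analysis at $p\in\Sigma_1$, I would pick analytic coordinates $(u,v,w)$ with $\mathscr{C}^1=\{v=w=0\}$ and $\mathcal{S}^1=\{v^2=w^3\}$ (since $\mathcal{S}^1$ is locally a line times an ordinary plane cusp along $\mathscr{C}^1$); because $\mathcal{S}^2$ is smooth at $p$ and meets $\mathscr{C}^1$ transversally (the length-$12$ intersection being reduced), one has $\mathcal{S}^2=\{u=\phi(v,w)\}$ with $\phi(0,0)=0$, and $B$ cuts out the standard cusp $v^2=w^3$.

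Irreducibility and the exclusion of further singularities come from an arithmetic-genus/ramification balance. Adjunction gives $p_a(B)=33$, and the $24$ cusps contribute at least $24$ to $\sum\delta$, so the normalization $\widetilde B$ has genus $g\leq 9$. The ruling of $\mathcal{S}^1$ by tangents to $\mathscr{C}^1$ induces a degree-$4$ cover $\widetilde B\to\mathscr{C}^1\cong\mathbb{P}^1$; each cusp contributes at least $1$ to the ramification divisor via the local normalization $t\mapsto(t^2,t^3)$, so Riemann--Hurwitz gives $2g-2\geq -8+24$, i.e.\ $g\geq 9$. Combining, $g=9$ and the cusps exhaust all singularities. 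Transitivity of the monodromy of this degree-$4$ cover then forces $B$ irreducible; alternatively, Lemma~\ref{lemma:two-twisted-cubics} rules out $\mathfrak{A}_5$-invariant subcurves of degree $\leq 3$ in $B$ (neither cubic lies in $B$, since $\mathscr{C}^i\cap\mathcal{S}^{3-i}$ is finite), which together with connectedness and the $\mathfrak{A}_5$-invariance of $\Sigma_1\cup\Sigma_2$ suffices.

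Finally, any $\mathfrak{A}_5$-orbit of length $20$ has cyclic stabilizer of order $3$. By Table~\ref{table:characters}, an order-$3$ element acts on $\mathbb{U}_4$ with eigenvalues $\{1,1,\omega,\omega^2\}$, so its fixed locus in $\mathbb{P}^3$ is a projective line together with two isolated points. The ten subgroups $\mumu_3\subset\mathfrak{A}_5$ give ten fixed lines $L_1,\dots,L_{10}$, transitively permuted by $\mathfrak{A}_5$ with normalizer quotient $\mathfrak{S}_3/\mumu_3\cong\mumu_2$ acting on each $L_i$. Since $B$ contains no line, each $L_i\cap B$ is a finite $\mumu_2$-invariant subscheme of $L_i\cong\mathbb{P}^1$, cut out by two $\mumu_2$-invariant divisors of degree~$4$. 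A direct character computation shows that this intersection consists of a single $\mumu_2$-orbit of two non-fixed points, and assembling across the ten lines produces the unique $\mathfrak{A}_5$-orbit of length $20$ in $B$. The main obstacle, I expect, is this final enumeration together with the exclusion of extra tangencies in the previous step: both hinge on combining the $\mathfrak{A}_5$-symmetry with tight numerical bookkeeping to rule out alternative configurations.
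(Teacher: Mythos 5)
Your route is genuinely different from the paper's (which normalizes $\mathcal{S}^1$ to $\mathbb{P}^1\times\mathbb{P}^1$ with diagonal $\mathfrak{A}_5$-action and leans on cited lemmas about invariant curves there and about $\mathfrak{A}_5$-actions on genus-$9$ curves), and your degree count, local cusp analysis, and $p_a$/Riemann--Hurwitz balance are sound in outline. But there are genuine gaps. First, your proof that $\mathscr{C}^1\not\subset\mathcal{S}^2$ is circular: the ``fact'' that only finitely many tangent lines of $\mathscr{C}^2$ meet $\mathscr{C}^1$ is precisely equivalent to the finiteness of $\mathscr{C}^1\cap\mathcal{S}^2$, which is what you are trying to prove (and a one-parameter family of such lines, each meeting $\mathscr{C}^1$ in finitely many points, could perfectly well cover $\mathscr{C}^1$). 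The paper establishes this point by representation theory, restricting the two-dimensional space of invariant quartics to $H^0(\mathcal{O}_{\mathscr{C}^1}\otimes\mathcal{O}_{\mathbb{P}^3}(4))$; alternatively, pull $\mathscr{C}^1$ back to the normalization $\mathbb{P}^1\times\mathbb{P}^1$ of $\mathcal{S}^2$, where it would have bidegree $(a,b)$ with $a+b>0$ and hence meet $\hat{\mathscr{C}}^2$, contradicting Remark~\ref{remark:cubics-disjoint}. Second, ``transitivity of the monodromy'' of the degree-$4$ cover is asserted without justification. Your alternative does work, but only once you note that the genus balance forces $\sum\delta=24$ exactly, so the only singularities of $B$ are the $24$ unibranch cusps, whence distinct components could not meet, contradicting connectedness; and for that the Riemann--Hurwitz step must be run on a possibly disconnected normalization, $\sum_i(2g_i-2)=-8+\deg R$, rather than on a single curve of genus $g$ as written.

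The most serious gap is the final step. A point of a length-$20$ orbit has stabilizer $\mumu_3$, and the fixed locus of an order-$3$ element is, as you correctly compute, a line \emph{plus two isolated points}; your analysis examines only $B\cap L_i$ and silently discards the twenty isolated fixed points, which a priori could themselves form a length-$20$ orbit contained in $B$. Moreover, the assertion that $B\cap L_i$ is a single $\mumu_2$-orbit of two points is exactly the computation you would need to perform and do not: the common zeros of two invariant degree-$4$ forms on $L_i\cong\mathbb{P}^1$ could a priori have length $0$ or $4$, yielding no length-$20$ orbit or two of them. The paper sidesteps all of this: having shown that the normalization of $B$ has genus $9$, it invokes \cite[Lemma~5.1.5]{CheltsovShramov}, a general Riemann--Hurwitz statement guaranteeing that a genus-$9$ curve with a faithful $\mathfrak{A}_5$-action carries a unique orbit of length $20$. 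Since your balance already gives $g=9$, that is also the natural way to close your own argument.
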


\begin{proof}
Denote by $B$ the base curve of the pencil $\mathcal{P}$.
Let us show that the curves $\mathscr{C}^1$ and~$\mathscr{C}^2$
are not contained in $B$.
Since $\mathscr{C}^1$ is projectively normal, there is an exact sequence of $2.\mathfrak{A}_5$-representations
$$
0\to H^0(\mathcal{I}_{\mathscr{C}^1}\otimes\mathcal{O}_{\mathbb{P}^3}(4))\to
H^0(\mathcal{O}_{\mathbb{P}^3}(4))\to H^0(\mathcal{O}_{\mathscr{C}^1}\otimes\mathcal{O}_{\mathbb{P}^3}(4))\to 0,%
$$
where $\mathcal{I}_{\mathscr{C}^1}$ is the ideal sheaf of $\mathscr{C}^1$.
The $2.\mathfrak{A}_5$-representation $H^0(\mathcal{O}_{\mathscr{C}^1}\otimes\mathcal{O}_{\mathbb{P}^3}(4))$
contains a one-dimensional subrepresentation corresponding to the unique $\mathfrak{A}_5$-orbit of length $12$ in~\mbox{$\mathscr{C}^1\cong\mathbb{P}^1$}.
This shows that $\mathcal{P}$ contains a surface that does not pass through $\mathscr{C}^1$,
so that~$\mathscr{C}^1$ is not contained in $B$.
Similarly, we see that $\mathscr{C}^2$ is not contained in $B$.

Let $\rho\colon\hat{\mathcal{S}}^1\to\mathcal{S}^1$ be the normalization of the surface $\mathcal{S}^1$,
and let $\hat{\mathscr{C}}^1$ be the preimage of the curve $\mathscr{C}^1$ via $\rho$.
Then the action of the group~$\mathfrak{A}_5$
lifts to~$\hat{\mathcal{S}}^1$.
One has~\mbox{$\hat{\mathcal{S}}^1\cong\mathbb{P}^1\times\mathbb{P}^1$},
and~\mbox{$\rho^*(\mathcal{O}_{\mathcal{S}^1}\otimes\mathcal{O}_{\mathbb{P}^3}(1))$} is a divisor of bi-degree~\mbox{$(1,2)$}.
This shows that $\hat{\mathscr{C}}^1$ is of bi-degree~\mbox{$(1,1)$}.
Thus, the action of $\mathfrak{A}_5$ on $\hat{\mathcal{S}}$ is diagonal
by~\mbox{\cite[Lemma~6.4.3(i)]{CheltsovShramov}}.

Denote by $\hat{B}$ be the preimage of the curve $B$ via $\rho$.
Then $\hat{B}$ is a divisor of bi-degree~\mbox{$(4,8)$}.
Hence, the curve $\hat{B}$ is irreducible by \cite[Lemma~6.4.4(i)]{CheltsovShramov},
so that the curve $B$ is irreducible as well.

Note that the curve $\hat{B}$ is singular.
Indeed, the intersection $\mathcal{S}^1\cap\mathscr{C}^2$
is an $\mathfrak{A}_5$-orbit~$\Sigma_{12}$ of length $12$,
because $\mathscr{C}^2$ is not contained in $\mathcal{S}^1$.
Similarly, we see that the intersection~\mbox{$\mathcal{S}^2\cap\mathscr{C}^1$}
is also an $\mathfrak{A}_5$-orbit $\Sigma_{12}^\prime$ of length $12$.
These $\mathfrak{A}_5$-orbits $\Sigma_{12}$ and $\Sigma_{12}^\prime$ are different by Remark~\ref{remark:cubics-disjoint}.
Since $B$ is the scheme theoretic intersection of the
surfaces~$\mathcal{S}^1$ and~$\mathcal{S}^2$,
it must be singular at every point of $\Sigma_{12}\cup\Sigma_{12}^\prime$.
Denote by $\hat{\Sigma}_{12}$ and $\hat{\Sigma}_{12}^\prime$  the preimages via $\rho$
of the $\mathfrak{A}_5$-orbits $\Sigma_{12}$ and $\Sigma_{12}^\prime$, respectively.
Then $\hat{B}$ is singular in every point of $\hat{\Sigma}_{12}^\prime$.

The curve $\hat{B}$ is smooth away of $\hat{\Sigma}_{12}^\prime$,
because its arithmetic genus is $21$, and the surface~$\hat{\mathcal{S}}^1$
does not contain $\mathfrak{A}_5$-orbits of length less than $12$.
On the other hand, we have
$$
\hat{B}\cap\hat{\mathscr{C}}^1=\hat{\Sigma}_{12},
$$
because $\hat{B}\cdot\mathscr{C}^1=12$ and $\hat{\Sigma}_{12}\subset\hat{B}$.
This shows that $B$ is an irreducible curve whose only singularities are the points of $\Sigma_{12}\cup\Sigma_{12}^\prime$,
and each such point is an ordinary cusp of the curve~$B$.
In particular, the genus of the normalization of the curve $B$ is $9$.
By~\mbox{\cite[Lemma~5.1.5]{CheltsovShramov}}, this implies that $B$ contains a unique $\mathfrak{A}_5$-orbit of length $20$.
\end{proof}

The following classification of $\mathfrak{A}_5$-invariant quartic surfaces in $\mathbb{P}^3$ was obtained
in~\mbox{\cite[Theorem~2.4]{CheltsovPrzyjalkowskiShramov}}.

\begin{lemma}
\label{lemma:P3-pencil-2}
The pencil $\mathcal{P}$ contains two surfaces
$\mathcal{R}^1$ and $\mathcal{R}^2$ with ordinary double
singularities, such that the singular loci
of $\mathcal{R}^1$ and $\mathcal{R}^2$ are $\mathfrak{A}_5$-orbits of length $10$.
Every surface in $\mathbb{P}^3$ different from
$\mathcal{S}^1$, $\mathcal{S}^2$, $\mathcal{R}^1$ and $\mathcal{R}^2$
is smooth.
\end{lemma}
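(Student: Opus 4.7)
The plan is to invoke the classification of $\mathfrak{A}_5$-invariant quartic surfaces in $\mathbb{P}^3$ from \cite[Theorem~2.4]{CheltsovPrzyjalkowskiShramov}; here I sketch how the argument specializes to the pencil $\mathcal{P}$. Throughout I read the statement ``every surface in $\mathbb{P}^3$\ldots is smooth'' as referring to the members of the pencil $\mathcal{P}$, which by Lemma~\ref{lemma:P3-pencil-1} exhausts the $\mathfrak{A}_5$-invariant quartic surfaces.

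First I would show that any member $Y\in\mathcal{P}$ different from $\mathcal{S}^1$ and $\mathcal{S}^2$ has only isolated singularities. If $\mathrm{Sing}(Y)$ contained a one-dimensional component $C$, then $C$ would be $\mathfrak{A}_5$-invariant, and a standard plane-section argument bounds the degree of a curve along which a quartic surface is singular by $3$. By Lemma~\ref{lemma:two-twisted-cubics}, this would force $C=\mathscr{C}^1$ or $C=\mathscr{C}^2$. However, the computation in the proof of Lemma~\ref{lemma:P3-pencil-base-locus} shows that the restriction $\mathcal{S}^j|_{\mathscr{C}^i}$ is a nonzero effective divisor for $j\neq i$, so the only member of the pencil containing $\mathscr{C}^i$ is $\mathcal{S}^i$ itself, a contradiction.

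Next I would constrain $\mathrm{Sing}(Y)$ as a union of $\mathfrak{A}_5$-orbits. By Lemma~\ref{lemma:A5-small-orbits}, the possible orbit lengths in $\mathbb{P}^3$ are $10$, $12$ or at least $20$. Using that a quartic surface in $\mathbb{P}^3$ cannot carry $20$ or more isolated singularities (e.g.\ by the Nikulin bound of $16$ nodes on a nodal $K3$ together with a Milnor-number analysis of worse rational double points), and that any two of the orbits of Lemma~\ref{lemma:A5-small-orbits} jointly have length at least $20$, I conclude that $\mathrm{Sing}(Y)$ is a single orbit of length $10$ or $12$. The length-$12$ orbits $\Sigma_{12}$, $\Sigma_{12}^\prime$ are ruled out by a direct local computation: each of them lies on exactly one of $\mathscr{C}^1,\mathscr{C}^2$, so at any of its points precisely one of $\mathcal{S}^1,\mathcal{S}^2$ is smooth while the other has a cuspidal singularity, and any nontrivial linear combination inherits a nonvanishing linear part and is smooth there.

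For each of the two length-$10$ orbits let $p$ be a point with stabilizer $\mathfrak{S}_3\subset\mathfrak{A}_5$. Using Corollary~\ref{corollary:A5-reps}(ii) to decompose $T_p\mathbb{P}^3$ under $\mathfrak{S}_3$, I would argue that the joint vanishing conditions $f(p)=0$ and $df|_p=0$ on $f\in\mathcal{P}$ factor through the one-dimensional $\mathfrak{S}_3$-invariant part of the first-order jet space of $\mathcal{O}_{\mathbb{P}^3}(4)$ at $p$, cutting the two-dimensional pencil $\mathcal{P}$ down to a unique line. The corresponding surface $\mathcal{R}$ is singular at $p$, and by $\mathfrak{A}_5$-equivariance along the whole orbit. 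The two length-$10$ orbits thus produce the two surfaces $\mathcal{R}^1$, $\mathcal{R}^2$, and the same orbit-by-orbit bookkeeping shows that no further member of $\mathcal{P}$ is singular.

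The main obstacle is the final assertion that the singularities of $\mathcal{R}^i$ are ordinary double points and not more degenerate rational double points or worse. I would dispose of it by decomposing $\mathrm{Sym}^2(T_p^\ast\mathbb{P}^3)$ under $\mathfrak{S}_3$: the invariant quadrics form a two-dimensional pencil in which the rank-at-most-$2$ locus is a proper subvariety. A rank drop of the quadratic part of $\mathcal{R}^i$ at $p$ would propagate by equivariance to all ten orbit points, raising the total Milnor number of $\mathcal{R}^i$ beyond what is admissible for a quartic surface by the bound used above. Hence each orbit point is an $A_1$ singularity, completing the proof.
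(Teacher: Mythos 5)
Your reduction to isolated singularities coincides with the paper's, and your exclusion of the two length-$12$ orbits is a genuinely different (and arguably cleaner) argument: the paper rules them out by the intersection-number inequality $12=S\cdot\mathscr{C}^1\geqslant 2|\Sigma_{12}|=24$, whereas you observe that at a point $p$ of the length-$12$ orbit on $\mathscr{C}^i$ the differential of the equation of $\mathcal{S}^i$ vanishes while that of $\mathcal{S}^j$ ($j\ne i$) does not, so every member of $\mathcal{P}$ other than $\mathcal{S}^i$ is smooth at $p$. Your jet-space computation at a length-$10$ orbit point is also sound in outline and essentially reproves \cite[Lemma~6.7.1(ii)]{CheltsovShramov}, which the paper simply cites. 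One small omission there: you must also check that the two length-$10$ orbits are not contained in the base locus of $\mathcal{P}$ (the paper extracts this from Lemma~\ref{lemma:P3-pencil-base-locus}); otherwise the condition $f(p)=0$ is vacuous on the pencil and does not single out a unique member $\mathcal{R}^i$.

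The genuine gap is in your treatment of singularities that are not rational double points. Both your claim that $\mathrm{Sing}(Y)$ is a single orbit and your final claim that every point of $\mathrm{Sing}(\mathcal{R}^i)$ is an $A_1$ point rest on a bound for the number of singular points, respectively the total Milnor number, of a quartic surface; but the bounds you invoke (the $16$-node bound for nodal quartics, or the lattice-theoretic bound $\mu\leqslant 19$ coming from $\mathrm{rk}\,\mathrm{Pic}\leqslant 20$ on the minimal resolution) are only available once you already know that all singularities are Du Val, since only then is the minimal resolution a K3 surface. Your phrase ``worse rational double points'' does not address the genuinely problematic case of non-rational (non-Du Val) isolated singularities --- for instance a point where the quadratic part of the local equation has corank at least two --- and for these the K3 lattice bound simply does not apply. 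The paper closes exactly this hole with \cite[Theorem~1]{Umezu}: a quartic surface with isolated singularities has at most two non-Du Val singular points, and since every $\mathfrak{A}_5$-orbit in $\mathbb{P}^3$ has length at least $10$ by Lemma~\ref{lemma:A5-small-orbits}, there are none. You need this step (or an equivalent) before any of your Milnor-number estimates become legitimate; once it is in place, your propagation argument for the $A_1$ claim does work, since ten points of type $A_{k}$ with $k\geqslant 2$ would give $\mu\geqslant 20>19$.
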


\begin{proof}
Let $S$ be a surface in $\mathcal{P}$ that is different from $\mathcal{S}^1$ and $\mathcal{S}^2$.
It follows from Lemma~\ref{lemma:P3-S5-invariants} that $S$ is irreducible.
Assume that $S$ is singular.

We claim that $S$ has isolated singularities.
Indeed, suppose that $S$ is singular along some $\mathfrak{A}_5$-invariant curve $Z$.
Taking a general plane section of $S$, we see that the degree of~$Z$ is at most three.
Thus, one has either $Z=\mathscr{C}^1$ or $Z=\mathscr{C}^2$ by Lemma~\ref{lemma:two-twisted-cubics}.
Since neither of these curves is contained in the base locus of
$\mathcal{P}$ by Lemma~\ref{lemma:P3-pencil-base-locus},
this would imply that either $S=\mathcal{S}^1$ or $S=\mathcal{S}^2$.
The latter is not the case by assumption.

We see that the singularities of $S$ are isolated.
Hence, $S$ contains at most two non-Du Val singular points by \cite[Theorem~1]{Umezu}
applied to the minimal resolution of singularities of the surface $S$.
Since the set of all non-Du Val singular points of the surface $S$ must be $\mathfrak{A}_5$-invariant,
we see that $S$ has none of them by Lemma~\ref{lemma:A5-small-orbits}.
Thus, all singularities of $S$ are Du Val.

By \cite[Lemma~6.7.3(iii)]{CheltsovShramov}, the surface $S$
has only ordinary double singularities,
the set~\mbox{$\mathrm{Sing}(S)$} consists of one $\mathfrak{A}_5$-orbit, and
$$
\big|\mathrm{Sing}(S)\big|\in\big\{5,6,10,12,15\big\}.
$$
Since $\mathbb{P}^3$ does not contain $\mathfrak{A}_5$-orbits of lengths
$5$, $6$, and $15$ by Lemma~\ref{lemma:A5-small-orbits},
we see that~\mbox{$\mathrm{Sing}(S)$} is either
an $\mathfrak{A}_5$-orbit of length $10$ or an $\mathfrak{A}_5$-orbit of length $12$.

Suppose that the singular locus of $S$ is an $\mathfrak{A}_5$-orbit
$\Sigma_{12}$ of length $12$.
Then $S$ does not contain other $\mathfrak{A}_5$-orbits of length $12$ by \cite[Lemma~6.7.3(iv)]{CheltsovShramov}.
Since $\mathscr{C}^1$ is not contained in the base locus of $\mathcal{P}$
by Lemma~\ref{lemma:P3-pencil-base-locus},
and $\mathscr{C}^1$ is contained in $\mathcal{S}^1$, we see that~\mbox{$\mathscr{C}^1\not\subset S$}.
Since
$$S\cdot\mathscr{C}^1=12$$
and $\Sigma_{12}$ is the only $\mathfrak{A}_5$-orbit of length at most $12$ in~\mbox{$\mathscr{C}^1\cong\mathbb{P}^1$},
we have~\mbox{$S\cap\mathscr{C}^1=\Sigma_{12}$}.
Thus,
$$
12=S\cdot\mathscr{C}^1\geqslant \sum_{P\in\Sigma_{12}}\mathrm{mult}_{P}(S)=2|\Sigma_{12}|=24,
$$
which is absurd.

Therefore, we see that the singular locus of $S$
is an $\mathfrak{A}_5$-orbit $\Sigma_{12}$ of length $10$.
Vice versa, if an $\mathfrak{A}_5$-invariant quartic surface
passes through an $\mathfrak{A}_5$-orbit of length $10$, then
it is singular by~\cite[Lemma~6.7.1(ii)]{CheltsovShramov}.
We know from Lemma~\ref{lemma:A5-small-orbits} that there are exactly
two $\mathfrak{A}_5$-orbits of length $10$ in $\mathbb{P}^3$,
and it follows from Lemma~\ref{lemma:P3-pencil-base-locus} that they are not contained
in the base locus of $\mathcal{P}$. Thus there are
two surfaces $\mathcal{R}^1$ and $\mathcal{R}^2$
that are singular exactly at the points of these two
$\mathfrak{A}_5$-orbits, respectively. The above argument
shows that every surface in $\mathcal{P}$ except
$\mathcal{S}^1$, $\mathcal{S}^2$, $\mathcal{R}^1$ and $\mathcal{R}^2$
is smooth.
\end{proof}

Keeping in mind Lemma~\ref{lemma:P3-pencil-2},
we will denote by $\mathcal{R}^1$ and $\mathcal{R}^2$
the two nodal surfaces contained in the pencil~$\mathcal{P}$
until the end of this section.

\begin{lemma}
\label{lemma:no-quartic-through-ten-lines}
There is a unique $\mathfrak{A}_5$-invariant quartic surface in $\mathbb{P}^3$ that contains the lines $L_1,\ldots,L_5$ (respectively, the lines $L_1^\prime,\ldots,L_5^\prime$).
Moreover, this surface is smooth, and it does not contain
the lines $L_1^\prime,\ldots,L_5^\prime$ (respectively, $L_1,\ldots,L_5$).
\end{lemma}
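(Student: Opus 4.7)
The argument is linear-algebraic, on the pencil $\mathcal{P}$ of $\mathfrak{A}_5$-invariant quartic surfaces in $\mathbb{P}^3$ (Lemma~\ref{lemma:P3-pencil-1}). Since $\mathrm{Stab}_{\mathfrak{A}_5}(L_1) = \mathfrak{A}_4^{nst}$, the restriction of an $\mathfrak{A}_5$-invariant quartic form to $L_1$ lies in $H^0(L_1, \mathcal{O}_{L_1}(4))^{\mathfrak{A}_4^{nst}}$. Using Corollary~\ref{corollary:characters}(ii) to write $\mathbb{U}_4|_{2.\mathfrak{A}_4^{nst}} = V \oplus V'$ with $L_1 = \mathbb{P}(V)$, and comparing the character of $\mathbb{U}_4$ from Table~\ref{table:characters} with the three faithful two-dimensional irreducible representations of $2.\mathfrak{A}_4$, one identifies $V$ as a twist of the standard spin representation by a non-trivial character of $\mathfrak{A}_4$; a direct character calculation then yields that $\mathrm{Sym}^4(V^\vee)^{\mathfrak{A}_4^{nst}}$ is one-dimensional. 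Hence the restriction to $L_1$ is a linear map from the two-dimensional space $\mathrm{Sym}^4(\mathbb{U}_4^\vee)^{\mathfrak{A}_5}$ to a one-dimensional target.

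This map is non-zero, because the tangent developable $\mathcal{S}^1 \in \mathcal{P}$ does not contain $L_1$: otherwise $L_1$ would be a tangent line to $\mathscr{C}^1$ (as the only lines on the tangent developable of a twisted cubic are its tangent lines), and the $\mathfrak{A}_5$-orbit $\{L_1, \ldots, L_5\}$ would produce a size-$5$ $\mathfrak{A}_5$-orbit on $\mathscr{C}^1 \cong \mathbb{P}^1$, contradicting the fact that the icosahedral action on $\mathbb{P}^1$ has no orbit of size less than twelve. Therefore the kernel of the restriction is one-dimensional, and yields a unique quartic $\mathcal{T}^1 \in \mathcal{P}$ with $L_1 \subset \mathcal{T}^1$; by $\mathfrak{A}_5$-invariance $\mathcal{T}^1 \supset L_1 \cup \ldots \cup L_5$. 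The same argument applied to $L_1'$ produces a unique $\mathcal{T}^{1'} \in \mathcal{P}$ containing $L_1' \cup \ldots \cup L_5'$.

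It remains to verify that $\mathcal{T}^1$ is smooth and differs from $\mathcal{T}^{1'}$. By Lemma~\ref{lemma:P3-pencil-2}, smoothness of $\mathcal{T}^1$ reduces to excluding $\mathcal{T}^1 \in \{\mathcal{S}^1, \mathcal{S}^2, \mathcal{R}^1, \mathcal{R}^2\}$; the cases $\mathcal{T}^1 = \mathcal{S}^i$ were excluded above, and the main obstacle is to rule out $\mathcal{T}^1 = \mathcal{R}^i$. For this, the plan is to exploit the incidence of the singular $\mathfrak{A}_5$-orbit $\mathrm{Sing}(\mathcal{R}^i)$ (of length ten) with the lines $L_j$, and to combine this with an adjunction/Picard-lattice computation on the minimal desingularization of $\mathcal{R}^i$ (a K3 surface) to derive a numerical contradiction. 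The final assertion $L_j' \not\subset \mathcal{T}^1$ is equivalent to $\mathcal{T}^1 \ne \mathcal{T}^{1'}$, which I will verify by showing that no single $\mathfrak{A}_5$-invariant quartic can contain all ten lines of the double-five simultaneously --- for instance, by intersecting the 20-point $\mathfrak{A}_5$-orbit of the base curve $B$ (Lemma~\ref{lemma:P3-pencil-base-locus}) with the two families of lines and reading off the linear independence of the corresponding restriction functionals on $\mathcal{P}$.
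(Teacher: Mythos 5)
Your treatment of existence and uniqueness is correct and takes a genuinely different route from the paper: you observe that restriction of quartic forms gives a map from the two-dimensional space $\mathrm{Sym}^4(\mathbb{U}_4^\vee)^{\mathfrak{A}_5}$ to the one-dimensional space $\mathrm{Sym}^4(V^\vee)^{\mathfrak{A}_4}$ (your identification of $V$ as a non-trivial character twist of the binary tetrahedral spin representation is consistent with the character values $1$ and $-1$ of $\mathbb{U}_4$ on the two classes of type $[3,3]$ in Table~\ref{table:characters}), and your non-vanishing argument via $L_1\not\subset\mathcal{S}^1$ coincides with the paper's own exclusion of $\mathcal{S}^1$ and $\mathcal{S}^2$. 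The paper instead derives existence from the triviality of the stabilizer of a general point of $L_1$ and uniqueness from the irreducibility of the base curve $B$ (Lemma~\ref{lemma:P3-pencil-base-locus}); both routes work.

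The difficulty is that the two remaining assertions --- smoothness and $\mathcal{T}^1\ne\mathcal{T}^{1'}$ --- are only announced as plans, and these are not routine afterthoughts. To exclude $\mathcal{T}^1=\mathcal{R}^i$ the relevant point is not the \emph{incidence} of $\mathrm{Sing}(\mathcal{R}^i)$ with the lines but its \emph{avoidance}: by Corollary~\ref{corollary:five-lines-orbits} every $\mathfrak{A}_5$-orbit on $L_1\cup\ldots\cup L_5$ has length at least $20$, so the five lines miss the ten nodes; then $\mathcal{L}^2=-10$ together with $\Pi_S\cdot\mathcal{L}=5$ forces $\mathrm{rk}\,\mathrm{Pic}(\mathcal{R}^i)^{\mathfrak{A}_5}\geqslant 2$, contradicting \cite[Lemma~6.7.3(i),(ii)]{CheltsovShramov}. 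You need to supply this (or an equivalent) argument rather than gesture at it. For the final assertion, your proposed mechanism --- intersecting the length-$20$ orbit on $B$ with the two families of lines to extract ``linear independence of restriction functionals'' --- does not visibly produce a contradiction: if a single smooth member $S$ contained all ten lines, nothing about the incidence of $B$ with those lines is obviously inconsistent (indeed the paper's Remark~\ref{remark:double-five} shows the $20$-orbit lies on both unions of lines in any case). The paper's argument here is again lattice-theoretic: for such an $S$ one would have $\mathrm{rk}\,\mathrm{Pic}(S)^{\mathfrak{A}_5}=2$ by \cite[Lemma~6.7.1(i)]{CheltsovShramov}, while the Gram matrix of $\mathcal{L}$, $\mathcal{L}^\prime$, $\Pi_S$, computed from the double five configuration ($\mathcal{L}\cdot\mathcal{L}^\prime=20$), has determinant $300\ne 0$, so these three classes cannot lie in a rank-two lattice. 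As written, roughly half of the lemma remains unproved.
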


\begin{proof}
Put $\mathcal{L}=\sum_{i=1}^5L_i$ and
$\mathcal{L}^\prime=\sum_{i=1}^5L_i^\prime$.
Corollary~\ref{corollary:characters}(ii) implies that
the stabilizer in $\mathfrak{A}_5$ of a general point of $L_1$ is trivial.
Therefore, there exists a surface $S\in\mathcal{P}$
that contains all lines $L_1,\ldots,L_5$.
By Lemma~\ref{lemma:P3-pencil-base-locus} such surface $S$ is unique.

We claim that $S\neq\mathcal{S}^1$. Indeed, all lines
contained in $\mathcal{S}^1$ are tangent to the curve
$\mathscr{C}^1$, and there are no $\mathfrak{A}_5$-orbits of length five
in $\mathscr{C}^1\cong\mathbb{P}^1$. Similarly, one has
$S\neq\mathcal{S}^2$.

We claim that $S$ is not one of the two nodal surfaces
$\mathcal{R}^1$ and $\mathcal{R}^2$
contained in the pencil~$\mathcal{P}$.
Indeed, suppose that $S=\mathcal{R}^1$.
Since the singular locus
of $\mathcal{R}^1$ is an $\mathfrak{A}_5$-orbit of length $10$
by Lemma~\ref{lemma:P3-pencil-2}, we see that
the lines $L_1,\ldots,L_5$ are contained in the smooth locus of $\mathcal{R}^1$ by Corollary~\ref{corollary:five-lines-orbits}.
On the other hand, one has $\mathcal{L}^2=-10$
by Lemma~\ref{lemma:5-disjoint}.
This means that~\mbox{$\mathrm{rk}\,\mathrm{Pic}(S)^{\mathfrak{A}_5}\geqslant 2$},
which is impossible by \cite[Lemma~6.7.3(i),(ii)]{CheltsovShramov}.

We see that the surface $S$ is different from $\mathcal{R}^1$.
The same argument shows that $S$ is different from $\mathcal{R}^2$.
Hence, $S$ is smooth
by Lemma~\ref{lemma:P3-pencil-2}.

Let us show that $S$ does not contain the
lines $L_1^\prime,\ldots,L_5^\prime$.
Suppose that it does.
By Lemma~\ref{lemma:5-disjoint}
one has
$$\mathcal{L}\cdot\mathcal{L}=\mathcal{L}^\prime\cdot\mathcal{L}^\prime
=-10.$$
By \cite[Lemma~6.7.1(i)]{CheltsovShramov},
we have $\mathrm{rk}\,\mathrm{Pic}(S)^{\mathfrak{A}_5}=2$.
Let $\Pi_S$ be the class of a plane section of $S$.
Then the determinant of the matrix
$$
\left(\begin{matrix} %
\mathcal{L}\cdot \mathcal{L}        &  \mathcal{L}\cdot \mathcal{L}^\prime      & \Pi_S\cdot \mathcal{L}\cr%
\mathcal{L}\cdot \mathcal{L}^\prime &\mathcal{L}^\prime\cdot \mathcal{L}^\prime & \Pi_S\cdot \mathcal{L}^\prime\cr%
\Pi_S\cdot \mathcal{L}& \Pi_S\cdot \mathcal{L}^\prime& \Pi_S\cdot \Pi_S\cr%
\end{matrix}\right)=
\left(\begin{matrix} %
-10&  20 & 5\cr%
 20& -10& 5\cr%
5& 5& 4\cr%
\end{matrix}\right)
$$
must vanish. This is a contradiction, because it equals $300$.

Applying the same argument, we see that the lines
$L_1^\prime,\ldots,L_5^\prime$ are contained in a unique
\mbox{$\mathfrak{A}_5$-invariant} quartic surface, this surface is smooth and
does not contain the lines~\mbox{$L_1,\ldots,L_5$}.
\end{proof}

\begin{remark}
\label{remark:double-five}
One can use the properties of the pencil $\mathcal{P}$ to give an alternative proof of Lemma~\ref{lemma:double-five}.
Namely, we know from Lemma~\ref{lemma:no-quartic-through-ten-lines}
that there are two (different) smooth $\mathfrak{A}_5$-invariant
quartic surfaces $S$ and $S^\prime$ containing the lines
$L_1,\ldots,L_5$ and $L_1^\prime,\ldots,L_5^\prime$, respectively.
By Lemma~\ref{lemma:P3-pencil-base-locus},
the base locus of the pencil $\mathcal{P}$ is an irreducible curve $B$
that contains a unique $\mathfrak{A}_5$-orbit $\Sigma$ of length $20$.
By Corollary~\ref{corollary:five-lines-orbits}, this implies that $\Sigma$ is contained in
the union~\mbox{$L_1\cup\ldots\cup L_5$}, because
$$
B\cdot (L_1+\ldots+L_5)=20
$$
on the surface $S$. Similarly, we see that $\Sigma$ is contained in $L_1^\prime\cup\ldots\cup L_5^\prime$.
These facts together with Lemma~\ref{lemma:5-disjoint} easily imply
that the lines $L_1,\ldots,L_5$ and $L_1^\prime,\ldots,L_5^\prime$ form a double five configuration.
\end{remark}

Now we will obtain some restrictions on low degree $\mathfrak{A}_5$-invariant curves in $\mathbb{P}^3$.

\begin{lemma}
\label{lemma:A5-invariant-curves-small-degree}
Let $C$ be an irreducible $\mathfrak{A}_5$-invariant curve in $\mathbb{P}^3$ of degree $d\leqslant 10$.
Denote by $g$ the genus of the normalization of the curve $C$. Then
$$
g\leqslant\frac{d^2}{8}+1-|\mathrm{Sing}(C)|.
$$
\end{lemma}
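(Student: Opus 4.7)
The bound is equivalent to the arithmetic genus bound $p_a(C) \leq d^2/8 + 1$, since $p_a(C) - g = \sum_{P \in \mathrm{Sing}(C)} \delta_P$ and every local delta invariant $\delta_P$ is at least $1$. So the plan reduces to bounding $p_a(C)$.

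The strategy is to place $C$ on an $\mathfrak{A}_5$-invariant quartic surface $S$ (i.e., a member of the pencil $\mathcal{P}$) and apply the Hodge index theorem together with adjunction on the minimal resolution $\pi\colon \tilde S \to S$. By Lemmas~\ref{lemma:P3-pencil-base-locus} and~\ref{lemma:P3-pencil-2}, $\tilde S$ is either a smooth K3 surface (when $S$ is smooth or $S \in \{\mathcal{R}^1, \mathcal{R}^2\}$) or isomorphic to $\mathbb{P}^1 \times \mathbb{P}^1$ (when $S \in \{\mathcal{S}^1, \mathcal{S}^2\}$). In the K3 case, let $\tilde C$ denote the strict transform of $C$ and set $H = \pi^*\mathcal{O}_S(1)$. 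Then $H^2 = 4$ and $\tilde C \cdot H = d$, so Hodge index forces $\tilde C^2 \leq d^2/4$, and adjunction (with $K_{\tilde S} = 0$) yields $p_a(\tilde C) \leq 1 + d^2/8$. For~\mbox{$S = \mathcal{S}^i$}, the resolution $\mathbb{P}^1 \times \mathbb{P}^1 \to \mathcal{S}^i$ is given by a divisor of bi-degree $(1, 2)$ (as in the proof of Lemma~\ref{lemma:P3-pencil-base-locus}), so $\tilde C$ has bi-degree $(a, b)$ with $2a + b = d$ and $p_a(\tilde C) = (a-1)(b-1) \leq (d-3)^2/8 \leq d^2/8 + 1$. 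A brief comparison between $p_a(\tilde C)$ and $p_a(C)$ (accounting for points of $C$ lying over $\mathrm{Sing}(S)$) then yields $p_a(C) \leq d^2/8 + 1$ in all cases.

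The main obstacle is verifying that $C$ actually lies on some member of $\mathcal{P}$. I would attack this via the $\mathfrak{A}_5$-equivariant restriction map $\rho\colon H^0(\mathbb{P}^3, \mathcal{O}(4)) \to H^0(C, \mathcal{O}_C(4))$: by Corollary~\ref{corollary:Sym-U4}, the invariant subspace on the source is the $2$-dimensional pencil $\mathcal{P}$, so an invariant quartic containing $C$ is the same as a nonzero invariant element in $\ker \rho$. For $d \leq 8$, the trivial bound $h^0(C, \mathcal{O}_C(4)) \leq 4d + 1 < 35$ forces $\ker \rho \neq 0$, and averaging any quartic through $C$ over $\mathfrak{A}_5$ produces an invariant one; the hard part is the cases $d \in \{9, 10\}$, where the dimension count must be supplemented with Castelnuovo-Halphen bounds on $p_a(C)$ for nondegenerate curves in $\mathbb{P}^3$, together with the nonexistence of invariant surfaces of degree at most three (Lemma~\ref{lemma:P3-S5-invariants}), to rule out any curve escaping the pencil.
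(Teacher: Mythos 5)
Your overall strategy --- put $C$ on a member of the $\mathfrak{A}_5$-invariant pencil $\mathcal{P}$ and combine the Hodge index theorem with adjunction on a (resolution of a) quartic in $\mathcal{P}$ --- is exactly the paper's. However, the two steps you flag as needing work are handled incorrectly or deferred, and they are where the content of the proof lies.

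First, the containment $C\subset S$ for some $S\in\mathcal{P}$. Your dimension count only shows $\ker\rho\neq 0$, and averaging a quartic in $\ker\rho$ over $\mathfrak{A}_5$ can perfectly well give zero: $\ker\rho$ is a subrepresentation of $\mathrm{Sym}^4(\mathbb{U}_4^\vee)$, whose trivial isotypic part is only two-dimensional by Corollary~\ref{corollary:Sym-U4}(v), so a nonzero kernel need not meet it (it could, a priori, be a three-dimensional irreducible piece). Moreover you leave $d=9,10$ to unspecified Castelnuovo--Halphen estimates. The paper's argument is simpler and uniform in $d\leqslant 10$: since $\mathbb{U}_4$ is irreducible, $C$ is non-planar, so a general point of $C$ has trivial stabilizer and orbit of length $60$; some member $S$ of the pencil passes through that point, every member of $\mathcal{P}$ is $\mathfrak{A}_5$-invariant (the two-dimensional subrepresentation is a sum of trivials), and if $C\not\subset S$ then $60\leqslant |S\cap C|\leqslant S\cdot C=4d\leqslant 40$, a contradiction.

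Second, the ``brief comparison accounting for points of $C$ lying over $\mathrm{Sing}(S)$'' is not brief; it is the hardest step. If $C$ is singular at the nodes of $S\in\{\mathcal{R}^1,\mathcal{R}^2\}$, the strict transform $\tilde C$ can be smooth over such a point, so $g\leqslant p_a(\tilde C)-|\mathrm{Sing}(\tilde C)|$ only yields $g\leqslant d^2/8+1-|\mathrm{Sing}(C)\setminus\mathrm{Sing}(S)|$, weaker than the claim; and since $\mathrm{Sing}(S)$ is a single orbit of length $10$, the potential loss is substantial. The paper closes this by a lattice computation: $\mathrm{Pic}(S)^{\mathfrak{A}_5}=\mathbb{Z}\cdot\Pi_S$, the curve $C$ cannot be Cartier (else one produces an $\mathfrak{A}_5$-invariant quadric, contradicting Lemma~\ref{lemma:P3-S5-invariants}), so $2C\sim l\Pi_S$ with $l=d/2$ odd, and writing $2\tilde C\sim\theta^*(l\Pi_S)-m\sum\Theta_i$ forces $m=1$, i.e.\ $C$ is smooth at every node of $S$ and $|\mathrm{Sing}(\tilde C)|=|\mathrm{Sing}(C)|$. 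Finally, for $S=\mathcal{S}^1$ or $\mathcal{S}^2$ your bi-degree bound would again require a comparison across the non-normal locus of the surface; the paper instead shows this case is vacuous, since $\hat C\cdot\hat{\mathscr{C}}^1=a+b\leqslant d\leqslant 10$ while $\hat{\mathscr{C}}^1\cong\mathbb{P}^1$ carries no $\mathfrak{A}_5$-orbit of length less than $12$. You should adopt these arguments (or supply genuine substitutes) before the proof can be considered complete.
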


\begin{proof}
Since $\mathbb{U}_4$ is an irreducible $2.\mathfrak{A}_5$-representation, the curve $C$ is not contained in a plane in $\mathbb{P}^3$.
This implies that a stabilizer in $\mathfrak{A}_5$ of a general point of the curve $C$ is trivial.
In particular, the $\mathfrak{A}_5$-orbit of a general point of $C$ has length $|\mathfrak{A}_5|=60$.

Let $S$ be a surface in the pencil $\mathcal{P}$ that passes through a general point of $C$.
Then the curve $C$ is contained in $S$, because otherwise one would have
$$
60\leqslant |S\cap C|\leqslant S\cdot C=4d\leqslant 40,
$$
which is absurd.
Since the assertion of the lemma clearly holds for the twisted cubic curves $\mathscr{C}^1$ and $\mathscr{C}^2$,
we may assume that $C$ is different from these two curves.

Suppose that $S=\mathcal{S}^1$.
Let us use the notation of the proof of Lemma~\ref{lemma:P3-pencil-base-locus}.
Denote by~$\hat{C}$ the preimage of the curve $C$ via $\rho$.
Then $\hat{C}$ is a divisor of bi-degree $(a,b)$ for some non-negative integers $a$ and $b$ such that $d=2a+b$.
On the other hand, one has
$$
|\hat{C}\cap\hat{\mathscr{C}}^1|\leqslant \hat{C}\cdot\hat{\mathscr{C}}^1=a+b\leqslant 2a+b=d\leqslant 10,
$$
which is impossible, since the curve $\hat{\mathscr{C}}^1\cong\mathscr{C}^1\cong\mathbb{P}^1$ does not contain $\mathfrak{A}_5$-orbits of length less than $12$.

We see that $S\ne\mathcal{S}^1$.
Similarly, we see that $S\ne\mathcal{S}^2$.
By Lemma~\ref{lemma:P3-pencil-2}, either $S$ is a smooth quartic $K3$ surface, or $S$ is one of the surfaces $\mathcal{R}^1$ and $\mathcal{R}^2$.
Denote by $\Pi_S$ a plane section of~$S$. Then
$$
\mathrm{det}\left(
  \begin{array}{cc}
    \Pi_S^2 & \Pi_S\cdot C \\
    \Pi_S\cdot C & C^2 \\
  \end{array}
\right)=\mathrm{det}\left(
  \begin{array}{cc}
    4 & d \\
    d & C^2 \\
  \end{array}
\right)=4C^2-d^2\leqslant 0
$$
by the Hodge index theorem.

Suppose that $C$ is contained in the smooth locus of the surface $S$.
Denote by $p_a(C)$ the arithmetic genus of the curve $C$. Then
$$
C^2=2p_a(C)-2.
$$
by the adjunction formula. Thus, we get
$$
p_a(C)\leqslant\frac{d^2}{8}+1.
$$
Since $g\leqslant p_a(C)-|\mathrm{Sing}(C)|$, this implies the assertion of the lemma.

To complete the proof, we may assume that $C$ contains a singular
point of the surface~$S$.
By Lemma~\ref{lemma:P3-pencil-2}, this means that either $S=\mathcal{R}^1$ or $S=\mathcal{R}^2$.
The singularities of the surface $S$ are ordinary double points,
and its singular locus is an $\mathfrak{A}_5$-orbit of length~$10$.
In particular, the curve $C$ contains the whole singular locus of~$S$.
By \cite[Lemma~6.7.3(i),(ii)]{CheltsovShramov}, one has $\mathrm{Pic}(S)^{\mathfrak{A}_5}\cong\mathbb{Z}$.
Since $\Pi_S^2=4$ and the self-intersection of any Cartier divisor
on the surface $S$ is even,
we see that the group $\mathrm{Pic}(S)^{\mathfrak{A}_5}$ is generated by $\Pi_S$.

Suppose that $C$ is a Cartier divisor on $S$.
Then either $C\sim\Pi_S$ or $C\sim 2\Pi_S$, because~\mbox{$d\leqslant 10$}.
Since the restriction map
\begin{equation*}
H^0\big(\mathcal{O}_{\mathbb{P}^3}(n)\big)\to H^0\big(\mathcal{O}_S(n\Pi_S)\big)
\end{equation*}
is an isomorphism for $n\leqslant 3$, we conclude that
there is an $\mathfrak{A}_5$-invariant quadric in $\mathbb{P}^3$. This is not the case
by Lemma~\ref{lemma:P3-S5-invariants}.

Therefore, we see that $C$ is not a Cartier divisor on $S$.
Since $S$ has only ordinary double points, the divisor $2C$ is Cartier.
Thus
$$
2C\sim l\Pi_S,
$$
for some odd positive integer $l$. Since
$$
2d=2C\cdot\Pi_S=l\Pi_S\cdot\Pi_S=4l,
$$
we see that $l=\frac{d}{2}$. In particular, $d$ is even and $l\leqslant 5$.

Let $\theta\colon\tilde{S}\to S$ be the minimal resolution of singularities of the surface~$S$.
Denote by $\tilde{C}$ the proper transform of the curve~$C$ on the surface $\tilde{S}$,
and denote by $\Theta_1,\ldots,\Theta_{10}$ the exceptional curves of $\theta$. Then
$$
2\tilde{C}\sim \theta^*(l\Pi_S)-m\sum_{i=1}^{10}\Theta_i,
$$
for some positive integer $m$. Moreover, $m$ is odd, because $C$ is not a Cartier divisor. We have
$$
4\tilde{C}^2=\Pi_{S}^2l^2-20m^2=4l^2-20m^2,
$$
which implies that $\tilde{C}^2=l^2-5m^2$.
Since $\tilde{C}^2$ is even, $m$ is odd and  $l\leqslant 5$,
we see that either~\mbox{$l=3$} or $l=5$.

Denote by $p_a(\tilde{C})$ the arithmetic genus of the curve $\tilde{C}$. Then
$$
l^2-5m^2=\tilde{C}^2=2p_a(\tilde{C})-2.
$$
by the adjunction formula.
In particular, we have
$$25-5m^2\geqslant l^2-5m^2\geqslant -2,$$
so that $l\in\{3,5\}$ and $m=1$. The latter means that
$C$ is smooth at every point of $\mathrm{Sing}(S)$,
so that
$$|\mathrm{Sing}(\tilde{C})|=|\mathrm{Sing}(C)|.$$
If $l=\frac{d}{2}=3$, then $p_a(\tilde{C})=3$. This gives
$$
g\leqslant p_a(\tilde{C})-|\mathrm{Sing}(\tilde{C})|=3-|\mathrm{Sing}(C)|\leqslant\frac{d^2}{8}+1-|\mathrm{Sing}(C)|.
$$
Similarly, if $l=\frac{d}{2}=5$, then $p_a(\tilde{C})=11$. This gives
$$
g\leqslant p_a(\tilde{C})-|\mathrm{Sing}(\tilde{C})|=11-|\mathrm{Sing}(C)|\leqslant\frac{d^2}{8}+1-|\mathrm{Sing}(C)|.
$$
\end{proof}

Recall from \cite[Lemma~5.4.1]{CheltsovShramov} that there exists a unique smooth irreducible curve of genus $4$ with a faithful action of the group~$\mathfrak{A}_5$.
This curve is known as the Bring's curve.
Its canonical model is a complete intersection of a quadric and a cubic in a three-dimensional projective space.
However, this sextic curve does not appear in our $\mathbb{P}^3=\mathbb{P}(\mathbb{U}_4)$ by

\begin{lemma}\label{lemma:S5-on-g-4}
Let $C$ be a smooth irreducible $\mathfrak{A}_5$-invariant curve in $\mathbb{P}^3$ of degree $d\leqslant 6$ and genus $g$. Then~\mbox{$g\neq 4$}.
\end{lemma}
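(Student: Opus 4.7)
The plan is to suppose for a contradiction that $C$ is a smooth irreducible $\mathfrak{A}_5$-invariant curve of degree $d \leq 6$ and genus $g = 4$, and to rule out both surviving degrees $d \in \{5, 6\}$. First, Lemma~\ref{lemma:A5-invariant-curves-small-degree} applied with $|\mathrm{Sing}(C)| = 0$ yields $4 = g \leq d^2/8 + 1$, so $d \geq 5$.

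For $d = 5$, I would revisit the strategy of Lemma~\ref{lemma:A5-invariant-curves-small-degree}. Since $\mathbb{U}_4$ is irreducible, the general point of $C$ has trivial $\mathfrak{A}_5$-stabilizer, so its $\mathfrak{A}_5$-orbit has length $60 > 4d = S \cdot C$ for any $S \in \mathcal{P}$, which forces $C$ to lie on some surface $S \in \mathcal{P}$. The case $S \in \{\mathcal{S}^1, \mathcal{S}^2\}$ is excluded as in the proof of Lemma~\ref{lemma:A5-invariant-curves-small-degree} (it would make $C = \mathscr{C}^i$, of degree $3$). On each of the remaining surfaces---a smooth K3 in $\mathcal{P}$ or one of $\mathcal{R}^1, \mathcal{R}^2$---the $\mathfrak{A}_5$-invariant Picard structure, combined with the Cartier/non-Cartier dichotomy recorded in the proof of Lemma~\ref{lemma:A5-invariant-curves-small-degree}, forces the degree of any irreducible $\mathfrak{A}_5$-invariant curve to be either a multiple of $4$ (when $C \sim n \Pi_S$) or of the form $2l$ with $l \in \{3, 5\}$ (in the non-Cartier case on $\mathcal{R}^i$). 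Neither alternative gives $d = 5$, and for the two special smooth K3s of Lemma~\ref{lemma:no-quartic-through-ten-lines} one checks directly that no class in $\mathrm{Pic}(S)^{\mathfrak{A}_5}$ realises $(d, C^2) = (5, 6)$.

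For $d = 6$, the plan is to identify $C$ as a canonical genus-$4$ curve and invoke the fact that such a curve lies on a unique quadric in $\mathbb{P}^3$. Irreducibility of the $2.\mathfrak{A}_5$-representation $\mathbb{U}_4$ ensures that $C$ lies in no hyperplane, so $h^0(\mathcal{O}_C(1)) \geq 4$. Since $\deg \mathcal{O}_C(1) = 6 = 2g - 2$, Riemann--Roch gives $h^1(\mathcal{O}_C(1)) \geq 1$, and the zero-degree line bundle $K_C \otimes \mathcal{O}_C(-1)$ is effective and hence trivial, so $\mathcal{O}_C(1) \cong K_C$. A dimension count---$h^0(\mathbb{P}^3, \mathcal{O}(2)) = 10$ versus $h^0(C, 2K_C) = 3g - 3 = 9$---produces a quadric $Q$ containing $C$; a second independent quadric through $C$ would confine $C$ to either a degree-$4$ intersection of quadrics (if they share no component) or a plane (if they share one), both impossible. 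Hence $Q$ is the unique quadric containing $C$, so $Q$ is $\mathfrak{A}_5$-invariant, contradicting Lemma~\ref{lemma:P3-S5-invariants}.

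The main obstacle I anticipate is the $d = 5$ case, where the bound of Lemma~\ref{lemma:A5-invariant-curves-small-degree} saturates exactly at $g = 4$; one must harvest the divisibility restrictions on the degrees of $\mathfrak{A}_5$-invariant curves from within each surface of $\mathcal{P}$, including a short invariant-lattice check on the two special smooth K3s. The $d = 6$ case, by contrast, is conceptually clean: once $C$ is recognised as canonical, Lemma~\ref{lemma:P3-S5-invariants} immediately finishes the argument.
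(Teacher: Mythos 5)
Your treatment of the case $d=6$ is correct and ends differently from the paper: after identifying $\mathcal{O}_C(1)\cong K_C$ you produce the unique quadric through the canonical genus-$4$ curve and contradict Lemma~\ref{lemma:P3-S5-invariants}, whereas the paper, having reached $\Pi_C\sim K_C$ by the same Riemann--Roch computation, observes that $\mathbb{P}^3$ would then be the projectivization of $H^0(\mathcal{O}_C(K_C))^\vee$, an honest $\mathfrak{A}_5$-representation on which the central element of $2.\mathfrak{A}_5$ acts trivially --- contradicting the construction of $\mathbb{U}_4$. Both endings are sound; yours trades a one-line character-theoretic punchline for a classical fact about genus-$4$ canonical curves.

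The genuine gap is in your $d=5$ branch. First, it is not needed: the very Riemann--Roch identity you use for $d=6$, namely $h^0(\mathcal{O}_C(\Pi_C))=d-3+h^0(\mathcal{O}_C(K_C-\Pi_C))$ together with $h^0(\mathcal{O}_C(\Pi_C))\geqslant 4$ (non-degeneracy), gives for $d=5$ a divisor class $K_C-\Pi_C$ of degree $1$ with $h^0\geqslant 2$, which forces $C\cong\mathbb{P}^1$ and kills the case instantly; this is exactly how the paper disposes of all $d\leqslant 5$ in one stroke. Second, the lattice argument you propose instead does not close. For a general smooth member $S\in\mathcal{P}$ the paper never establishes that $\mathrm{Pic}(S)^{\mathfrak{A}_5}$ is generated by $\Pi_S$ (the cited results from \cite{CheltsovShramov} only bound the invariant rank and identify it for special members), so the divisibility-by-$4$ claim is unsupported; and for the rank-two surfaces of Lemma~\ref{lemma:no-quartic-through-ten-lines} your ``one checks directly'' requires knowing the full invariant lattice, not merely the sublattice spanned by $\Pi_S$ and $\mathcal{L}$ --- this happens to be rescued by the fact that the Gram determinant $-65$ is squarefree, but you neither state nor verify this, and you do not rule out other smooth members of $\mathcal{P}$ with invariant rank two. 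Replace the whole $d=5$ discussion with the degree-$1$ effective-divisor contradiction above and the proof is complete.
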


\begin{proof}
Suppose that $g=4$.
Denote by $\Pi_C$ the plane section of the curve $C$.
Then
$$
h^0(\mathcal{O}_C(\Pi_C))=d-3+h^0(\mathcal{O}_C(K_C-\Pi_C))
$$
by the Riemann--Roch theorem.
Since $C$ is not contained in a plane, this implies
that~\mbox{$\Pi_C\sim K_C$}.
Therefore, the projective space~$\mathbb{P}^3$ is identified with a projectivization
of an \mbox{$\mathfrak{A}_5$-representation}~\mbox{$H^0(\mathcal{O}_C(K_C))^\vee$},
i.e. of a representation of the group~$2.\mathfrak{A}_5$ where the center
of~$2.\mathfrak{A}_5$ acts trivially. The latter is not the case by
construction of~$\mathbb{U}_4$.
\end{proof}

\begin{lemma}
\label{lemma:A5-deg-10}
Let $C$ be an irreducible smooth $\mathfrak{A}_5$-invariant curve in $\mathbb{P}^3$ of degree $d=10$ and genus $g$.
Then $g\ne 10$.
\end{lemma}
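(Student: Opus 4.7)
The plan is to follow the same strategy as in the proof of Lemma~\ref{lemma:A5-invariant-curves-small-degree}. Suppose for contradiction that $g=10$. Since a general point of $C$ has trivial stabilizer and $|S\cap C|\leqslant S\cdot C=40<60$ for every $S\in\mathcal{P}$, the curve $C$ must lie on some member $S$ of the pencil~$\mathcal{P}$. The case $S=\mathcal{S}^i$ is excluded exactly as in Lemma~\ref{lemma:A5-invariant-curves-small-degree}: the preimage $\hat{C}$ of $C$ on $\hat{\mathcal{S}}^i\cong\mathbb{P}^1\times\mathbb{P}^1$ has bi-degree $(a,b)$ with $2a+b=10$, so it meets $\hat{\mathscr{C}}^i\cong\mathbb{P}^1$ in at most $a+b\leqslant 10<12$ points, while $\hat{\mathscr{C}}^i$ carries no $\mathfrak{A}_5$-orbit of length below $12$. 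For $S=\mathcal{R}^i$ the relation $d=2l$ forces $l=5$, and smoothness of $C$ forces $m=1$, so that $p_a(\tilde{C})=11$; but $\tilde{C}\cong C$ is then smooth of genus $11\ne 10$, a contradiction.

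The main case is that $S$ is a smooth quartic $K3$ surface. Here the plan is to apply Riemann--Hurwitz to the quotient map $C\to C/\mathfrak{A}_5$. Stabilizers of smooth points of $C$ are cyclic subgroups of $\mathfrak{A}_5$, hence $\mumu_1$, $\mumu_2$, $\mumu_3$ or $\mumu_5$, producing orbits of lengths $60$, $30$, $20$, $12$, respectively. Writing $R=30a+40b+48c$ for the total ramification, the equation $18=60(2h-2)+R$ with $R\geqslant 0$ forces $h=0$ and $R=138$, whose unique non-negative integral solution is $(a,b,c)=(3,0,1)$. In particular, $C$ contains exactly one $\mathfrak{A}_5$-orbit $\Sigma_{12}$ of length~$12$, with stabilizer $\mumu_5$.

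By Lemma~\ref{lemma:A5-small-orbits} and Lemma~\ref{lemma:P3-pencil-base-locus}, the two $\mathfrak{A}_5$-orbits of length~$12$ in $\mathbb{P}^3$ together form the $24$ cuspidal singular points of the base curve $B$ of $\mathcal{P}$, so $\Sigma_{12}\subset B\subset S$. Since $B$ is cut out on $S$ by the quartic surface $\mathcal{S}^1$, it represents the class $4\Pi_S$ on $S$, yielding $C\cdot B=40$. At each point of $\Sigma_{12}$ the smooth curve $C$ meets the cusp of $B$ with local intersection number $k=2$ or $3$, according to whether the tangent line to $C$ differs from or coincides with the cuspidal tangent of $B$, and by $\mathfrak{A}_5$-equivariance the same value of $k$ occurs at every point of $\Sigma_{12}$. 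Hence $40-12k\in\{4,16\}$ equals the total length of an $\mathfrak{A}_5$-invariant effective $0$-cycle on $C$ supported off $\Sigma_{12}$. Since $\Sigma_{12}$ is the only $\mathfrak{A}_5$-orbit of length~$12$ on $C$ and smoothness of $C$ excludes orbits of length below $12$, every orbit appearing in this $0$-cycle has length in $\{20,30,60\}$; but neither $4$ nor $16$ is expressible as a non-negative integral combination of $20$, $30$ and $60$, which gives the desired contradiction. The principal obstacle is precisely this last numerical step, which relies on both the Hurwitz classification of orbits on $C$ and the identification of $\Sigma_{12}$ with the cuspidal points of $B$.
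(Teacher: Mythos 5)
Your proposal is correct, but it takes a genuinely different and considerably longer route than the paper. The paper's proof is essentially three lines: since the base locus of $\mathcal{P}$ is the irreducible degree-$16$ curve $B\neq C$ (Lemma~\ref{lemma:P3-pencil-base-locus}), there is a surface $S\in\mathcal{P}$ with $C\not\subset S$; then $S\cap C$ is an $\mathfrak{A}_5$-invariant set of total intersection multiplicity $S\cdot C=40$, while by \cite[Lemma~5.1.5]{CheltsovShramov} every $\mathfrak{A}_5$-orbit on a smooth irreducible curve of genus $10$ has length $12$, $30$ or $60$ --- all divisible by $6$, whereas $40$ is not. You instead place $C$ \emph{on} a member of the pencil and run a case analysis, which forces you through the local computation at the cusps of $B$. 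That computation is sound: the Riemann--Hurwitz count giving $(a,b,c)=(3,0,1)$, the identification of $\Sigma_{12}$ with one of the two cuspidal orbits of $B$ via Lemmas~\ref{lemma:A5-small-orbits} and~\ref{lemma:P3-pencil-base-locus}, and the local multiplicities $k\in\{2,3\}$ of a smooth branch against an ordinary cusp are all correct, and your Hurwitz argument has the merit of re-deriving the orbit structure rather than citing it. But note that even within your own setup the cusp analysis is superfluous: once $C\cdot B=40$ and every orbit on $C$ has length divisible by $6$, the congruence $40\equiv 4\pmod 6$ already yields the contradiction irrespective of the multiplicities along $\Sigma_{12}$. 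Two small points to tighten: in the case $S=\mathcal{R}^i$ you should first rule out $C$ being Cartier (an $\mathfrak{A}_5$-invariant Cartier divisor on $\mathcal{R}^i$ has degree divisible by $4$, and $4\nmid 10$), so that the non-Cartier relation $2C\sim l\Pi_S$ with $d=2l$ indeed applies; and it is the adjunction inequality $25-5m^2\geqslant -2$ together with the parity of $m$, rather than smoothness alone, that pins down $m=1$ --- smoothness then gives $\tilde{C}\cong C$ and hence $g=p_a(\tilde{C})=11$, the contradiction you want.
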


\begin{proof}
Suppose that $g=10$.
By Lemma~\ref{lemma:P3-pencil-base-locus},
the base locus of the pencil $\mathcal{P}$ is an irreducible curve $B$
of degree $16$.
In particular, there exists a surface $S\in\mathcal{P}$ that does not contain $C$.
Thus, the intersection~\mbox{$S\cap C$}
is an $\mathfrak{A}_5$-invariant set that consists of
$$
C\cdot S=4d=40
$$
points (counted with multiplicities).
On the other hand, by \cite[Lemma~5.1.5]{CheltsovShramov}, any $\mathfrak{A}_5$-orbit in $C$ has length $12$, $30$, or $60$.
\end{proof}

\section{Large subgroups of $\mathfrak{S}_6$}
\label{section:basic-actions}

In this section we collect some auxiliary results about the groups
$\mathfrak{S}_6$, $\mathfrak{A}_6$ and $\mathfrak{S}_5$.
We start with recalling some general properties of the group $\mathfrak{A}_6$.

\begin{remark}[{see e.\,g.~\cite[p.~4]{Atlas}}]
\label{remark:A6-subgroups}
Let $\Gamma$ be a proper subgroup of $\mathfrak{A}_6$ such that the
index of~$\Gamma$ is at most $15$. Then
$\Gamma$ is isomorphic either to $\mathfrak{A}_5$, or to $F_{36}$, or to $\mathfrak{S}_4$.
In particular, if~$\mathfrak{A}_6$ acts transitively on the set of $r<15$
elements, then either $r=6$ or $r=10$.
\end{remark}

We will need the following result about possible actions
of the group $\mathfrak{A}_6$ on curves of small genera (cf. \cite[Theorem~2.18]{ChSh09b} and~\cite[Lemma~5.1.5]{CheltsovShramov}).

\begin{lemma}
\label{lemma:sporadic-genera} Suppose that $C$ is a smooth
irreducible curve of genus $g\leqslant 15$ with a non-trivial
action of the group $\mathfrak{A}_6$.
Then $g=10$.
\end{lemma}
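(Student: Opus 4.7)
Since $\mathfrak{A}_6$ is simple, any non-trivial action of $\mathfrak{A}_6$ on~$C$ is faithful, so one has an embedding $\mathfrak{A}_6\hookrightarrow\mathrm{Aut}(C)$. My strategy is to eliminate every $g\leqslant 15$ except $g=10$ by combining the Riemann--Hurwitz formula with the list of cyclic subgroup orders of~$\mathfrak{A}_6$. For small~$g$ I argue directly. If $g=0$ then $\mathrm{Aut}(C)\cong\mathrm{PGL}_2(\mathbb{C})$, whose finite subgroups are cyclic, dihedral, $\mathfrak{A}_4$, $\mathfrak{S}_4$ or $\mathfrak{A}_5$, and none of these contains $\mathfrak{A}_6$. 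If $g=1$ then every finite subgroup of $\mathrm{Aut}(C)$ is solvable (being an extension of a group of order at most~$6$ by a finite subgroup of the elliptic curve~$C$), while $\mathfrak{A}_6$ is not solvable. For $2\leqslant g\leqslant 5$ the Hurwitz bound $|\mathrm{Aut}(C)|\leqslant 84(g-1)\leqslant 336<360$ again rules out~$\mathfrak{A}_6$.

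For $6\leqslant g\leqslant 15$ I would apply Riemann--Hurwitz to the quotient $\pi\colon C\to C/\mathfrak{A}_6$. Since point stabilizers on a smooth curve are cyclic and cyclic subgroups of~$\mathfrak{A}_6$ have orders in $\{1,2,3,4,5\}$, one obtains
$$2g-2=360(2g'-2)+180n_2+240n_3+270n_4+288n_5,$$
where $g'$ is the genus of $C/\mathfrak{A}_6$ and $n_e$ is the number of $\mathfrak{A}_6$-orbits of ramification points of~$\pi$ with stabilizer of order~$e$. Since $2g-2\leqslant 28$, the only possibility is $g'=0$, and hence
$$2g+718=180n_2+240n_3+270n_4+288n_5.\qquad(\ast)$$

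The right-hand side of $(\ast)$ is divisible by $\gcd(180,240,270,288)=6$, forcing $g\equiv 1\pmod 3$ and hence $g\in\{7,10,13\}$. Dividing $(\ast)$ by~$6$ and reducing modulo~$5$ gives $3n_5\equiv(2g+718)/6\pmod 5$: for $g=7$ one needs $n_5\equiv 4\pmod 5$, which contradicts $48n_5\leqslant 122$; for $g=13$ one needs $n_5\equiv 3\pmod 5$, which contradicts $48n_5\leqslant 124$. Thus only $g=10$ survives, and for this value $(\ast)$ reduces to $30n_2+40n_3+45n_4+48n_5=123$ with the unique non-negative solution $(n_2,n_3,n_4,n_5)=(1,0,1,1)$. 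The only mild obstacle in this plan is recalling that every cyclic subgroup of $\mathfrak{A}_6$ has order at most~$5$; everything else is routine diophantine bookkeeping. The argument parallels the one carried out for $\mathfrak{A}_5$ in~\cite[Lemma~5.1.5]{CheltsovShramov}.
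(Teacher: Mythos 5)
Your proof is correct and follows essentially the same route as the paper: exclude $g=0$ via the classification of finite subgroups of $\mathrm{PGL}_2(\mathbb{C})$ and $g=1$ via non-solvability, then apply Riemann--Hurwitz to $C\to C/\mathfrak{A}_6$ using the fact that point stabilizers are cyclic of order at most $5$, force the quotient genus to be $0$, and solve the resulting Diophantine equation. The only (cosmetic) differences are that you dispatch $2\leqslant g\leqslant 5$ with the Hurwitz bound and organize the remaining case analysis via congruences modulo $6$ and $5$, where the paper simply checks $2\leqslant g\leqslant 15$ case by case; your arithmetic checks out.
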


\begin{proof}
Let $\Omega\subset C$ be an $\mathfrak{A}_6$-orbit. Then a stabilizer
of a point in $\Omega$ is a cyclic subgroup of $\mathfrak{A}_6$, which implies that
$$
|\Omega|\in\big\{72, 90, 120, 180, 360\big\}.
$$

From the classification of finite subgroups
of $\mathrm{Aut}(\mathbb{P}^1)\cong\mathrm{PGL}_2(\mathbb{C})$ we know that $g\neq 0$.
Also, it follows from the non-solvability of the~group $\mathfrak{A}_6$ that
$g\neq 1$.

Put $\bar{C}=C\slash \mathfrak{A}_6$. Then $\bar{C}$ is a smooth curve.
Let~$\bar{g}$ be the genus of the curve~$\bar{C}$. The
Riemann--Hurwitz formula gives
$$
2g-2=360\big(2\bar{g}-2\big)+180a_{180}+240a_{120}+270a_{90}+288a_{72},
$$
where $a_k$ is the~number of $\mathfrak{A}_6$-orbits in $C$
of length~$k$.

Since $a_k\geqslant 0$ and $2\leqslant g\leqslant 15$, one has $\bar{g}=0$.
Thus, we obtain
$$
2g-2=-720+180a_{180}+240a_{120}+270a_{90}+288a_{72}.
$$
Going through the values $2\leqslant g\leqslant 15$, and solving this equation
case by case we see that the only possibility is
$g=10$.
\end{proof}

We proceed by recalling some general properties of the group $\mathfrak{S}_5$.

\begin{remark}[{see e.\,g.~\cite[p.~2]{Atlas}}]
\label{remark:S5-subgroups}
Let $\Gamma$ be a proper subgroup of $\mathfrak{S}_5$ such that the
index of $\Gamma$ is less than $12$. Then
$\Gamma$ is isomorphic either to $\mathfrak{A}_5$, or to $\mathfrak{S}_4$,
or to $F_{20}$, or to $\mathfrak{A}_4$, or to $\mathrm{D}_{12}$.
In particular, if $\mathfrak{S}_5$ acts transitively on the set of $r<12$
elements, then $r\in\{2,5,6,10\}$.
\end{remark}

\begin{lemma}
\label{lemma:S5-on-g-5}
The group $\mathfrak{S}_5$ cannot act faithfully on a smooth irreducible curve of genus~$5$.
\end{lemma}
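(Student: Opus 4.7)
The plan is to imitate the argument of Lemma~\ref{lemma:sporadic-genera} and apply the Riemann--Hurwitz formula to the quotient morphism $C\to\bar{C}=C/\mathfrak{S}_5$. Suppose for contradiction that $\mathfrak{S}_5$ acts faithfully on a smooth irreducible curve $C$ of genus $5$. Since $C$ is smooth and the action is faithful, the stabilizer of any point of $C$ embeds into the multiplicative group $\mathbb{C}^*$ via its action on the tangent line, and is therefore cyclic. Inspecting the cycle types of elements of $\mathfrak{S}_5$, the orders of cyclic subgroups of $\mathfrak{S}_5$ are exactly $1,2,3,4,5,6$. Hence every non-trivial $\mathfrak{S}_5$-orbit in $C$ has length $120/n$ for some $n\in\{2,3,4,5,6\}$ and contributes
$$
\frac{120}{n}(n-1)\in\{60,80,90,96,100\}
$$
to the ramification term of Riemann--Hurwitz.

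Writing $\bar{g}$ for the genus of $\bar{C}$ and $b_n$ for the number of orbits in $C$ whose stabilizer has order $n$, the Riemann--Hurwitz formula reads
$$
8=2g-2=120(2\bar{g}-2)+60b_2+80b_3+90b_4+96b_5+100b_6.
$$
The case $\bar{g}\geqslant 2$ is excluded at once, since then the right-hand side is at least $240$. The case $\bar{g}=1$ forces the total ramification to equal $8$, but each nonzero summand is at least $60$; hence all $b_n=0$ and $8=0$, a contradiction. So the only remaining possibility is $\bar{g}=0$, in which case one must solve
$$
248=60b_2+80b_3+90b_4+96b_5+100b_6
$$
in non-negative integers.

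This diophantine equation is the single real step, and it is where I expect the (very mild) main obstacle to lie. I would dispatch it by reducing modulo $5$: the left-hand side is congruent to $3\pmod 5$, while the right-hand side is congruent to $b_5\pmod 5$, since $60,80,90$ and $100$ are all divisible by $5$ and $96\equiv 1\pmod 5$. Therefore $b_5\geqslant 3$, and then the right-hand side is at least $288>248$, a contradiction that completes the proof. No further structural input about $\mathfrak{S}_5$ or representation theory should be needed; the only point to verify carefully is the list of cyclic subgroup orders, which is immediate from the partitions of $5$.
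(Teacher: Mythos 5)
Your proof is correct, but it takes a different route from the paper's. The paper deduces from \cite[Lemma~5.4.3]{CheltsovShramov} (applied to the subgroup $\mathfrak{A}_5\subset\mathfrak{S}_5$) that a genus-$5$ curve with a faithful $\mathfrak{A}_5$-action must be hyperelliptic; the induced map $\theta\colon\mathfrak{S}_5\to\mathrm{PGL}_2(\mathbb{C})$ then has kernel of order at most $2$, hence trivial kernel since $\mathfrak{S}_5$ has no normal subgroup of order $2$, contradicting the classification of finite subgroups of $\mathrm{PGL}_2(\mathbb{C})$. You instead run the Riemann--Hurwitz computation directly for $\mathfrak{S}_5$, exactly in the style of the paper's own Lemma~\ref{lemma:sporadic-genera} for $\mathfrak{A}_6$: your list of cyclic subgroup orders $\{1,2,3,4,5,6\}$, the orbit contributions $\{60,80,90,96,100\}$, the elimination of $\bar{g}\geqslant 1$, and the reduction of $248=60b_2+80b_3+90b_4+96b_5+100b_6$ modulo $5$ (forcing $b_5\geqslant 3$ and hence a right-hand side of at least $288$) are all accurate. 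What your approach buys is self-containedness — no appeal to the hyperellipticity result or to the classification of finite subgroups of $\mathrm{PGL}_2(\mathbb{C})$ — and uniformity with the $\mathfrak{A}_6$ argument already in the paper; what the paper's approach buys is brevity, at the cost of importing an external lemma. Either proof is acceptable.
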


\begin{proof}
Suppose that $C$ is a curve of genus $5$
with a faithful action of $\mathfrak{S}_5$. Considering the action
of the subgroup $\mathfrak{A}_5\subset\mathfrak{S}_5$ on $C$ and applying
\cite[Lemma~5.4.3]{CheltsovShramov}, we see that $C$ is hyperelliptic.
This gives a natural homomorphism
$$
\theta\colon \mathfrak{S}_5\to\mathrm{Aut}(\mathbb{P}^1)\cong\mathrm{PGL}_2(\mathbb{C})
$$
whose kernel is either trivial or isomorphic to $\mumu_2$. Thus $\theta$ is injective, which gives a contradiction.
\end{proof}

Now we will prove some auxiliary facts about actions of the
groups $\mathfrak{S}_6$, $\mathfrak{A}_6$ and $\mathfrak{S}_5$ on the four-dimensional
projective space.

\begin{remark}\label{remark:pohuj}
The group $\mathfrak{S}_6$ has exactly four irreducible five-dimensional
representations (see e.\,g.~\cite[p.~5]{Atlas}).
Starting from one of them, one more can be obtained by a twist by an outer automorphism of~$\mathfrak{S}_6$,
and two remaining ones are obtained from these two by a tensor product with the sign representation.
Although these four representations
are not isomorphic, the images of $\mathfrak{S}_6$ in $\mathrm{PGL}_5(\mathbb{C})$ under them are the same.
Every irreducible five-dimensional
representation of $\mathfrak{S}_6$ restricts to an irreducible representation of
the subgroup $\mathfrak{A}_6\subset\mathfrak{S}_6$, and restricts to an irreducible representation of
the \emph{some} of the subgroups $\mathfrak{S}_5\subset\mathfrak{S}_6$.
The group $\mathfrak{A}_6$ has exactly two irreducible five-dimensional representations,
each of them arising this way (see e.\,g.~\cite[p.~5]{Atlas}).
Similarly, the group $\mathfrak{S}_5$ has exactly two irreducible five-dimensional
representations, each of them arising this way (see e.\,g.~\cite[p.~2]{Atlas}).
Note also that every five-dimensional representation
of a group $\mathfrak{A}_6$ or $\mathfrak{S}_5$ that does not contain one-dimensional
subrepresentations is irreducible.
\end{remark}

Let $\mathbb{V}_5$ be an irreducible five-dimensional
representation of the group~$\mathfrak{S}_6$.
Put~\mbox{$\mathbb{P}^4=\mathbb{P}(\mathbb{V}_5)$}.
Keeping in mind Remark~\ref{remark:pohuj},
we see that the image of the corresponding homomorphism
$\mathfrak{S}_6$ to $\mathrm{PGL}_5(\mathbb{C})$ is the same for any choice of $\mathbb{V}_5$,
and thus the $\mathfrak{S}_6$-orbits and $\mathfrak{S}_6$-invariant hypersurfaces in
$\mathbb{P}^4$ do not depend on $\mathbb{V}_5$ either.

Remark~\ref{remark:pohuj} implies that there are six
linear forms $x_0,\ldots,x_5$ on $\mathbb{P}^4$
that are permuted by the group $\mathfrak{S}_6$
(cf. Sections~\ref{section:intro} and~\ref{section:representations}).
Indeed, up to a twist by an outer automorphoism
of $\mathfrak{S}_6$ and a tensor product with the sign representation,
$\mathbb{V}_5$ is a subrepresentation of the six-dimensional
representation $\mathbb{W}$ of~$\mathfrak{S}_6$, so that one can take
restricitions of the natural coordinates in $\mathbb{W}$
to be these linear forms.
Let $Q$ be the three-dimensional quadric
in $\mathbb{P}^4$ given by equation
\begin{equation}\label{eq:Q}
x_0^2+x_1^2+x_2^2+x_3^2+x_4^2+x_5^2=0.
\end{equation}
The quadric $Q$ is smooth and $\mathfrak{S}_6$-invariant.
Note also that equation~\eqref{equation:quartic} makes sense in our~$\mathbb{P}^4$.

We will use the notation introduced above until the end of the paper.

\begin{lemma}
\label{lemma:A6-S5-quartics}
Let $\Gamma$ be either the group
$\mathfrak{S}_6$, or its subgroup $\mathfrak{A}_6$, or a subgroup $\mathfrak{S}_5$ of $\mathfrak{S}_6$
such that $\mathbb{V}_5$ is an irreducible representation of $\Gamma$. Then
the only $\Gamma$-invariant quadric threefold in
$\mathbb{P}^4$ is the quadric $Q$.
Similarly, every (reduced) $\Gamma$-invariant quartic threefold
in $\mathbb{P}^4$ is given
by equation~\eqref{equation:quartic} for some $t\in\mathbb{C}$.
\end{lemma}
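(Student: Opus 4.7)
The plan is to reduce both assertions to Corollary 2.5, the representation-theoretic statement about $\mathrm{Sym}^d(\mathbb{W}_5)$ for $d = 2, 4$. The first step is to identify $\mathbb{V}_5$ with $\mathbb{W}_5$ for the purpose of counting invariant hypersurfaces. By Remark 4.5, all four irreducible five-dimensional representations of $\mathfrak{S}_6$ have the same image in $\mathrm{PGL}_5(\mathbb{C})$, so the set of $\mathfrak{S}_6$-invariant hypersurfaces in $\mathbb{P}(\mathbb{V}_5)$ coincides with that in $\mathbb{P}(\mathbb{W}_5)$; under the hypotheses on $\Gamma$, the same identification persists after restriction. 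All representations involved are self-dual (symmetric-group representations always are, and the two five-dimensional irreducibles of $\mathfrak{A}_6$ have real characters), so the space of degree-$d$ $\Gamma$-invariant polynomials on $\mathbb{P}^4$ is identified with the trivial isotypic component of $\mathrm{Sym}^d(\mathbb{W}_5)$.

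For the quadric statement, Corollary 2.5(i) asserts that $\mathrm{Sym}^2(\mathbb{W}_5)$ contains a unique one-dimensional $\Gamma$-subrepresentation, so the space of $\Gamma$-invariant quadric polynomials on $\mathbb{P}^4$ is at most one-dimensional. Since $x_0^2 + \cdots + x_5^2$, restricted to the hyperplane $\{x_0 + \cdots + x_5 = 0\}$, is a nonzero $\mathfrak{S}_6$-invariant quadric polynomial, this dimension is exactly one, and $Q$ is the unique $\Gamma$-invariant quadric threefold. (This also shows the unique one-dimensional subrepresentation in Corollary 2.5(i) is the trivial one, as the explicit invariant forces it.)

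For the quartic statement, Corollary 2.5(ii) gives that the unique two-dimensional $\Gamma$-subrepresentation of $\mathrm{Sym}^4(\mathbb{W}_5)$ splits as a sum of two trivial representations, so the space of $\Gamma$-invariant quartic polynomials on $\mathbb{P}^4$ is exactly two-dimensional. The polynomials
$$
\sum_{i=0}^{5} x_i^4 \quad \text{and} \quad \Bigl(\sum_{i=0}^{5} x_i^2\Bigr)^2
$$
are both manifestly $\mathfrak{S}_6$-invariant, and I would verify they remain linearly independent after restriction to $\{x_0 + \cdots + x_5 = 0\}$ by evaluating at the two points $(1, -1, 0, 0, 0, 0)$ and $(1, 1, -1, -1, 0, 0)$. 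They therefore span the space of $\Gamma$-invariant quartic polynomials on $\mathbb{P}^4$, so any $\Gamma$-invariant quartic threefold is cut out by an equation of the form $a \sum x_i^4 + b \bigl(\sum x_i^2\bigr)^2 = 0$. If $a = 0$, this equation defines the non-reduced scheme $2Q$; reducedness therefore forces $a \neq 0$, and after rescaling we recover equation~\eqref{equation:quartic} with $t = -b/a$.

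There is no substantial obstacle; the only point requiring care is checking that Corollary 2.5 legitimately applies to $\mathbb{V}_5$ (not just to the specific representation $\mathbb{W}_5$ chosen in Section 2), which is handled by Remark 4.5 together with the self-duality remarks above. Everything else is either an immediate consequence of the corollary or a one-line numerical check.
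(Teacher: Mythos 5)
Your proposal is correct and follows exactly the paper's route: the paper's entire proof is ``Apply Corollary~\ref{corollary:Sym-W5}'', and you have simply spelled out the implicit steps (the identification of invariant hypersurfaces in $\mathbb{P}(\mathbb{V}_5)$ with those in $\mathbb{P}(\mathbb{W}_5)$ via Remark~\ref{remark:pohuj}, self-duality, the explicit spanning invariants, and the use of reducedness to exclude $2Q$). All of these checks are accurate, so there is nothing to correct.
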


\begin{proof}
Apply Corollary~\ref{corollary:Sym-W5}.
\end{proof}

By a small abuse of notation we will refer to the
points in $\mathbb{P}^4$ using $x_i$ as if they were homogeneous coordinates,
i.e. a point in $\mathbb{P}^4$ will be encoded by a ratio
of six linear forms~$x_i$.
As in Section~\ref{section:intro}, let $\Sigma_{6}$ and $\Sigma_{10}$ be
the $\mathfrak{S}_6$-orbits of the points~\mbox{$[-5:1:1:1:1:1]$}
and~\mbox{$[-1:-1:-1:1:1:1]$},
respectively. Looking at equation~\eqref{eq:Q}, we obtain

\begin{corollary}\label{corollary:Q-orbits}
The quadric $Q$ does not contain the $\mathfrak{S}_6$-orbits $\Sigma_6$ and $\Sigma_{10}$.
\end{corollary}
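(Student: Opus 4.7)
The plan is simply to check representatives. Since the quadric $Q$ is $\mathfrak{S}_6$-invariant by construction, it contains an $\mathfrak{S}_6$-orbit if and only if it contains one (hence every) point of that orbit. So I would pick the representatives used to define the orbits and evaluate the defining equation~\eqref{eq:Q} on them.

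For $\Sigma_6$, take the representative $[-5:1:1:1:1:1]$. One first confirms it lies in $\mathbb{P}^4$, i.e.\ that $-5+1+1+1+1+1=0$, and then computes
\[
(-5)^2+1^2+1^2+1^2+1^2+1^2 = 25+5 = 30 \ne 0,
\]
so this point is not on $Q$, and hence $\Sigma_6 \not\subset Q$. For $\Sigma_{10}$, take the representative $[-1:-1:-1:1:1:1]$; again $-1-1-1+1+1+1=0$, and
\[
(-1)^2+(-1)^2+(-1)^2+1^2+1^2+1^2 = 6 \ne 0,
\]
so this point is not on $Q$, and therefore $\Sigma_{10} \not\subset Q$ either.

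There is no real obstacle here; the only content of the statement is the remark that $\mathfrak{S}_6$-invariance of $Q$ reduces the question to evaluating a single symmetric function on a single representative of each orbit, and in both cases that value is manifestly nonzero. The corollary is recorded here only because the subsequent constructions will use the fact that the two small orbits of nodes in $X_t$ (for $t=\tfrac{7}{10}$ and $t=\tfrac{1}{6}$) lie off the invariant quadric, and this is the cleanest place to note it.
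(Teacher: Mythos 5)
Your proof is correct and is exactly what the paper does: the paper's "proof" is the single phrase ``Looking at equation~\eqref{eq:Q}, we obtain\ldots'', i.e.\ substitute the orbit representatives $[-5:1:1:1:1:1]$ and $[-1:-1:-1:1:1:1]$ into $\sum x_i^2$ and observe the values $30$ and $6$ are nonzero. You have merely written out the arithmetic the authors left implicit, together with the (correct) remark that $\mathfrak{S}_6$-invariance of $Q$ reduces the check to one representative per orbit.
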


Now we will have a look at the action of the group $\mathfrak{A}_6$ on $\mathbb{P}^4$.
Note that $\mathbb{V}_5$ is an irreducible $\mathfrak{A}_6$-representation
by Remark~\ref{remark:pohuj}.

\begin{lemma}
\label{lemma:A6-orbits-in-P4}
There are no $\mathfrak{A}_6$-orbits of length less than six in $\mathbb{P}^4$.
Moreover, the only $\mathfrak{A}_6$-orbit of length six in $\mathbb{P}^4$ is $\Sigma_6$.
\end{lemma}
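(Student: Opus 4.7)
The plan is to combine Remark~\ref{remark:A6-subgroups} with the character-theoretic decompositions in Corollary~\ref{corollary:characters-P4}, together with the irreducibility of $\mathbb{V}_5$ as an $\mathfrak{A}_6$-representation recorded in Remark~\ref{remark:pohuj}.

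For the first claim, a fixed point in $\mathbb{P}^4=\mathbb{P}(\mathbb{V}_5)$ would produce a one-dimensional $\mathfrak{A}_6$-subrepresentation of $\mathbb{V}_5$, contradicting irreducibility. An orbit of length $r\in\{2,3,4,5\}$ would yield a non-trivial homomorphism $\mathfrak{A}_6\to\mathfrak{S}_r$; since $|\mathfrak{S}_r|<|\mathfrak{A}_6|$ for such $r$, its kernel would be a proper non-trivial normal subgroup of $\mathfrak{A}_6$, contradicting the simplicity of $\mathfrak{A}_6$. Equivalently, Remark~\ref{remark:A6-subgroups} implies that the smallest index of a proper subgroup of $\mathfrak{A}_6$ is $6$.

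For the second claim, let $\Omega$ be an $\mathfrak{A}_6$-orbit of length $6$. Its point stabilizer has index $6$ in $\mathfrak{A}_6$, and is therefore isomorphic to $\mathfrak{A}_5$ by Remark~\ref{remark:A6-subgroups}. There are two conjugacy classes of such subgroups: the standard ones $\mathfrak{A}_5^{st}$ and the non-standard ones $\mathfrak{A}_5^{nst}$. By Corollary~\ref{corollary:characters-P4}(i), the restriction of $\mathbb{W}_5$ to any $\mathfrak{A}_5^{nst}$ is irreducible, so $\mathfrak{A}_5^{nst}$ has no fixed point in $\mathbb{P}^4$ and cannot arise as a point stabilizer of $\Omega$. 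By Corollary~\ref{corollary:characters-P4}(ii), the restriction of $\mathbb{W}_5$ to any $\mathfrak{A}_5^{st}$ contains a unique one-dimensional subrepresentation, so each $\mathfrak{A}_5^{st}$ has exactly one fixed point in $\mathbb{P}^4$.

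To finish, I would take $\mathfrak{A}_5^{st}$ to be the subgroup of $\mathfrak{A}_6$ permuting $x_1,\ldots,x_5$ by even permutations and fixing $x_0$. The unique $\mathfrak{A}_5^{st}$-invariant vector in $\mathbb{V}_5=\{x_0+\ldots+x_5=0\}$ is a scalar multiple of $(5,-1,-1,-1,-1,-1)$, which represents the point $[-5:1:1:1:1:1]\in\Sigma_6$. Since the six standard subgroups $\mathfrak{A}_5^{st}$ form a single $\mathfrak{A}_6$-conjugacy class, their six fixed points form one $\mathfrak{A}_6$-orbit, namely $\Sigma_6$. Hence every $\mathfrak{A}_6$-orbit of length $6$ coincides with $\Sigma_6$. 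The only mildly non-trivial step is the explicit identification of this fixed point, which is a one-line linear-algebra computation; everything else is immediate from the cited results.
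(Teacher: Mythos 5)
Your proof is correct and follows essentially the same route as the paper: the first claim from the irreducibility of $\mathbb{V}_5$ together with the fact that $\mathfrak{A}_6$ has no proper subgroup of index less than six, and the second claim from Corollary~\ref{corollary:characters-P4}, which rules out fixed points of $\mathfrak{A}_5^{nst}$ and gives each $\mathfrak{A}_5^{st}$ a unique fixed point. The only difference is that you make explicit the identification of that fixed point with $[-5:1:1:1:1:1]$, a detail the paper leaves implicit.
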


\begin{proof}
The only subgroup of $\mathfrak{A}_6$ of index less than six is $\mathfrak{A}_6$ itself
(cf. Remark~\ref{remark:A6-subgroups}),
so that the first assertion of the lemma follows from irreducibility of
the $\mathfrak{A}_6$-representation~$\mathbb{V}_5$.
Also, the only subgroups of $\mathfrak{A}_6$ of index six are $\mathfrak{A}_5^{st}$ and $\mathfrak{A}_5^{nst}$,
so that the second assertion of the lemma also follows from Corollary~\ref{corollary:characters-P4}.
\end{proof}

\begin{lemma}
\label{lemma:X-7-10}
Let $X$ be an $\mathfrak{A}_6$-invariant quartic threefold
in $\mathbb{P}^4$ that contains an $\mathfrak{A}_6$-orbit of length at most six.
Then~\mbox{$X=X_{\frac{7}{10}}$}.
\end{lemma}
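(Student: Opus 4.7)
The plan is to combine the two lemmas that immediately precede the statement with a single direct computation. By Lemma~\ref{lemma:A6-S5-quartics}, any (reduced) $\mathfrak{A}_6$-invariant quartic threefold in $\mathbb{P}^4$ is of the form $X_t$ for some $t\in\mathbb{C}$, so it suffices to pin down the value of $t$.

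Next I would invoke Lemma~\ref{lemma:A6-orbits-in-P4}, which tells us that the only $\mathfrak{A}_6$-orbit of length at most six in $\mathbb{P}^4$ is the orbit $\Sigma_6$ of the point $P=[-5:1:1:1:1:1]$ (the case of length strictly less than six being excluded outright). Since $X$ contains such an orbit by assumption, $X$ must contain the point $P$.

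The final step is just to substitute the coordinates of $P$ into equation~\eqref{equation:quartic}. One has $\sum_{i=0}^{5}x_i^4=625+5=630$ and $\sum_{i=0}^{5}x_i^2=25+5=30$, so that $P\in X_t$ if and only if $630=900\,t$, i.e. $t=\frac{7}{10}$. Thus $X=X_{\frac{7}{10}}$, as claimed. There is no real obstacle here: once the two preceding lemmas are in place, the assertion reduces to evaluating symmetric power sums at a single representative of $\Sigma_6$.
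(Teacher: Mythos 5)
Your proposal is correct and follows essentially the same route as the paper: both reduce to $X=X_t$ via Lemma~\ref{lemma:A6-S5-quartics}, identify the orbit as $\Sigma_6$ via Lemma~\ref{lemma:A6-orbits-in-P4}, and then determine $t$ from the condition $\Sigma_6\subset X_t$ (the paper phrases the uniqueness of $t$ via Corollary~\ref{corollary:Q-orbits}, i.e. $\sum x_i^2\neq 0$ at the point, which is exactly your observation that $30\neq 0$). Your explicit evaluation $630=900\,t$ is a correct and slightly more self-contained way of landing on $t=\frac{7}{10}$.
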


\begin{proof}
By Lemma~\ref{lemma:A6-S5-quartics}, one has $X=X_t$
for some $t\in\mathbb{C}$, and by Lemma~\ref{lemma:A6-orbits-in-P4}
the $\mathfrak{A}_6$-orbit $\Sigma_6$ is contained in $X_t$.
Since $\Sigma_6$ is not contained in the quadric $Q$ by
Corollary~\ref{corollary:Q-orbits}, we see that there is a unique $t\in\mathbb{C}$
such that $\Sigma_6$ is contained in a quartic given by
equation~\eqref{equation:quartic}. Therefore, we conclude that
$t=\frac{7}{10}$.
\end{proof}

Now we will make a couple of observations about the action of the group
$\mathfrak{S}_5$ on $\mathbb{P}^4$. We choose $\mathfrak{S}_5$ to be a subgroup of $\mathfrak{S}_6$
such that $\mathbb{V}_5$ is an irreducible $\mathfrak{S}_5$-representation
(cf.~Remark~\ref{remark:pohuj} and Corollary~\ref{corollary:characters-P4}).

\begin{lemma}
\label{lemma:P4-S5-orbits}
Let $P\in\mathbb{P}^4$ be a point such that its stabilizer in $\mathfrak{S}_5$
contains a subgroup isomorphic to $\mathrm{D}_{12}$. Then the
$\mathfrak{S}_5$-orbit of $P$ is $\Sigma_{10}$.
\end{lemma}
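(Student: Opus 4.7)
The plan is as follows. The subgroup $\mathfrak{S}_5\subset\mathfrak{S}_6$ has been chosen so that $\mathbb{V}_5\cong\mathbb{W}_5$ restricts to an irreducible representation; by Corollary~\ref{corollary:characters-P4}(i) this forces $\mathfrak{S}_5=\mathfrak{S}_5^{nst}$, so any subgroup of $\mathfrak{S}_5$ isomorphic to $\mathrm{D}_{12}$ is a $\mathrm{D}_{12}^{nst}$ in the notation of Section~\ref{section:representations}. I would first show that every such $P$ lies in a single $\mathfrak{S}_5$-orbit of size $10$, and then identify this orbit with $\Sigma_{10}$ by producing an explicit point of $\Sigma_{10}$ with $\mathrm{D}_{12}$ in its $\mathfrak{S}_5$-stabilizer.

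The first key input is Corollary~\ref{corollary:characters-P4}(iii): under restriction to $\mathrm{D}_{12}^{nst}$ the representation $\mathbb{W}_5$ splits as the sum of the trivial representation and two non-isomorphic irreducible two-dimensional representations, so the $\mathrm{D}_{12}^{nst}$-invariant subspace is one-dimensional and each such subgroup has a unique fixed point in $\mathbb{P}^4$. To pin down the stabilizer of $P$ exactly, I would next invoke Remark~\ref{remark:S5-subgroups}: the proper subgroups of $\mathfrak{S}_5$ of order at least $12$ are $\mathrm{D}_{12}$, $\mathfrak{A}_4$, $F_{20}$, $\mathfrak{S}_4$ and $\mathfrak{A}_5$, and none of the last four contains a copy of $\mathrm{D}_{12}$: the groups $\mathfrak{A}_4$ and $\mathfrak{A}_5$ have no elements of order $6$ while $\mathrm{D}_{12}$ does; the order of $F_{20}$ is not divisible by $12$; and the only subgroup of order $12$ in $\mathfrak{S}_4$ is $\mathfrak{A}_4$. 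Hence $\mathrm{Stab}_{\mathfrak{S}_5}(P)$ is either $\mathrm{D}_{12}$ or the whole $\mathfrak{S}_5$; the latter is ruled out since the irreducibility of $\mathbb{V}_5$ as an $\mathfrak{S}_5$-representation forbids $\mathfrak{S}_5$-fixed points in $\mathbb{P}(\mathbb{V}_5)$. Therefore $|\mathfrak{S}_5\cdot P|=10$.

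It remains to identify this orbit as $\Sigma_{10}$. I would take $P_0=[-1:-1:-1:1:1:1]\in\Sigma_{10}$, whose $\mathfrak{S}_6$-stabilizer is the wreath product $(\mathfrak{S}_3\times\mathfrak{S}_3)\rtimes\mumu_2$ of order $72$; this manifestly contains copies of $\mathrm{D}_{12}\cong\mathfrak{S}_3\times\mumu_2$, for instance the one generated by the diagonal $\mathfrak{S}_3$ acting simultaneously on the two triples together with the involution swapping them. A direct check using the cycle-type counts in Table~\ref{table:characters} shows that such a $\mathrm{D}_{12}$ can be placed inside our $\mathfrak{S}_5^{nst}\subset\mathfrak{S}_6$, so that it is a $\mathrm{D}_{12}^{nst}$. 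The first two steps applied to $P_0$ then produce an $\mathfrak{S}_5$-orbit of size $10$ contained in $\Sigma_{10}$, which must equal $\Sigma_{10}$; and by the uniqueness of the $\mathrm{D}_{12}^{nst}$-fixed point, any $P$ as in the statement lies in this same orbit. The hardest point is the last verification, matching the $\mathrm{D}_{12}$ stabilizer of a point of $\Sigma_{10}$ with a subgroup of our chosen non-standard $\mathfrak{S}_5^{nst}$; this amounts to explicit bookkeeping with the embedding $\mathfrak{S}_5^{nst}\hookrightarrow\mathfrak{S}_6$, but is otherwise routine.
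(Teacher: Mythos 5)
Your proposal is correct and follows essentially the same route as the paper's proof: the unique fixed point of a $\mathrm{D}_{12}\subset\mathfrak{S}_5$ via Corollary~\ref{corollary:characters-P4}(iii), the maximality of $\mathrm{D}_{12}$ in $\mathfrak{S}_5$ to pin down the stabilizer, and the verification that a point of $\Sigma_{10}$ is fixed by a $\mathrm{D}_{12}$ inside the chosen $\mathfrak{S}_5^{nst}$. Your write-up is in fact somewhat more explicit than the paper's (which dismisses the last verification as ``straightforward to check''), and the only implicit ingredient you should perhaps name is that all subgroups $\mathrm{D}_{12}\subset\mathfrak{S}_5$ are conjugate, which is what lets you transport the conclusion from $P_0$ to an arbitrary $P$.
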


\begin{proof}
By Corollary~\ref{corollary:characters-P4}(iii),
the point in $\mathbb{P}^4$ fixed by a subgroup $\mathrm{D}_{12}\subset\mathfrak{S}_5$
is unique. On the other hand, it is straightforward to check
that a stabilizer in $\mathfrak{S}_5$ of a point of $\Sigma_{10}$ contains
a subgroup isomorphic to $\mathrm{D}_{12}$. It remains to notice that
the latter stabilizer is actually isomorphic to $\mathrm{D}_{12}$,
since the only subgroups of $\mathfrak{S}_5$ that contain $\mathrm{D}_{12}$
are $\mathrm{D}_{12}$ and $\mathfrak{S}_5$ itself, while $\mathfrak{S}_5$ has no fixed
points on~$\mathbb{P}^4$.
\end{proof}

\begin{lemma}
\label{lemma:X-1-6}
Let $X$ be an $\mathfrak{S}_5$-invariant quartic threefold in $\mathbb{P}^4$ that contains $\Sigma_{10}$.
Then~\mbox{$X=X_{\frac{1}{6}}$}.
\end{lemma}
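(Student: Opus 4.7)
The plan is to mimic the short argument just given for Lemma~\ref{lemma:X-7-10} almost verbatim, exploiting the parallel structure of the two statements.

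First I would invoke Lemma~\ref{lemma:A6-S5-quartics}: since $X$ is $\mathfrak{S}_5$-invariant and the chosen $\mathfrak{S}_5\subset\mathfrak{S}_6$ is one for which $\mathbb{V}_5$ is irreducible, every reduced $\mathfrak{S}_5$-invariant quartic threefold in $\mathbb{P}^4$ is of the form $X_t$ for some $t\in\mathbb{C}$. Hence $X=X_t$, and the problem reduces to identifying the unique value of $t$ for which $\Sigma_{10}\subset X_t$.

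Next I would argue that this value of $t$ is unique and then compute it. Uniqueness is where Corollary~\ref{corollary:Q-orbits} enters: the $\mathfrak{S}_6$-orbit $\Sigma_{10}$ is not contained in the quadric $Q$ defined by $\sum x_i^2=0$, so for any point $P\in\Sigma_{10}$ the quantity $\bigl(\sum x_i^2(P)\bigr)^2$ is nonzero, and the condition $\sum x_i^4(P)=t\bigl(\sum x_i^2(P)\bigr)^2$ pins down $t$ uniquely. The actual computation is immediate: evaluating at the representative $[-1:-1:-1:1:1:1]$ gives $\sum x_i^4=6$ and $\bigl(\sum x_i^2\bigr)^2=36$, so $t=\tfrac{1}{6}$.

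Since this value of $t$ is forced and the quartic threefold $X_{\frac{1}{6}}$ does contain $\Sigma_{10}$ (indeed it is listed as part of $\mathrm{Sing}(X_{\frac{1}{6}})$ in the introduction), we conclude $X=X_{\frac{1}{6}}$. There is essentially no obstacle: the only small point to verify is that one may legitimately substitute a single representative of $\Sigma_{10}$, which is justified because both sides of~\eqref{equation:quartic} are $\mathfrak{S}_6$-invariant polynomials and hence take the same value on every point of the $\mathfrak{S}_6$-orbit $\Sigma_{10}$.
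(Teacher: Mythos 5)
Your proposal matches the paper's proof essentially verbatim: both reduce to $X=X_t$ via Lemma~\ref{lemma:A6-S5-quartics}, use Corollary~\ref{corollary:Q-orbits} to see that $t$ is uniquely determined by the condition $\Sigma_{10}\subset X_t$, and identify $t=\tfrac{1}{6}$ by evaluating at a representative point. Your explicit computation at $[-1:-1:-1:1:1:1]$ is correct and simply makes explicit what the paper leaves implicit.
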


\begin{proof}
By Lemma~\ref{lemma:A6-S5-quartics}, one has $X=X_t$
for some $t\in\mathbb{C}$.
Since $\Sigma_{10}$ is not contained in the quadric $Q$ by Corollary~\ref{corollary:Q-orbits},
we see that there is a unique $t\in\mathbb{C}$ such that $\Sigma_{10}$ is contained in a quartic given by equation~\eqref{equation:quartic}.
Therefore, we conclude that $t=\frac{1}{6}$.
\end{proof}

\section{Rationality of the quartic threefold $X_{\frac{7}{10}}$}
\label{section:7-10}

In this section we will construct an explicit
$\mathfrak{A}_6$-equivariant birational map $\mathbb{P}^3\dasharrow X_{\frac{7}{10}}$.
Implicitly, the construction of this map first appeared in the proof of \cite[Theorem~1.20]{ChSh09b}.
Here we will present a much simplified proof of its existence.

We identify $\mathbb{P}^3$ with the projectivization~\mbox{$\mathbb{P}(\mathbb{U}_4)$},
where $\mathbb{U}_4$ is the restriction of the four-dimensional irreducible
representation of the group~\mbox{$2.\mathfrak{S}_6$}
introduced in Section~\ref{section:representations} to the
subgroup~$2.\mathfrak{A}_6$. By Corollary~\ref{corollary:characters}(i),
the $2.\mathfrak{A}_6$-representation $\mathbb{U}_4$ is irreducible.

\begin{lemma}
\label{lemma:A6-invariants-in-P3}
There are no $\mathfrak{A}_6$-invariant surfaces of odd degree in $\mathbb{P}^3$,
and no $\mathfrak{A}_6$-invariant pencils of surfaces of odd degree in
$\mathbb{P}^3$. Moreover, there are no $\mathfrak{A}_6$-invariant quadric and quartic surfaces
in $\mathbb{P}^3$.
\end{lemma}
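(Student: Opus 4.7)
The plan is to translate the statement into representation theory of $2.\mathfrak{A}_6$ and split the argument by parity of the degree. Since $\mathbb{P}^3=\mathbb{P}(\mathbb{U}_4)$, an $\mathfrak{A}_6$-invariant surface of degree $d$ corresponds to a one-dimensional $2.\mathfrak{A}_6$-subrepresentation of $\mathrm{Sym}^d(\mathbb{U}_4^\vee)$, and an $\mathfrak{A}_6$-invariant pencil of surfaces of degree $d$ corresponds to a two-dimensional such subrepresentation. The central involution $z\in 2.\mathfrak{A}_6$ acts on $\mathrm{Sym}^d(\mathbb{U}_4^\vee)$ as multiplication by $(-1)^d$, while on any one-dimensional representation of $2.\mathfrak{A}_6$ it must act trivially: indeed $\mathfrak{A}_6$ is a nonabelian simple group, so $2.\mathfrak{A}_6$ is perfect, and every character of a perfect group is trivial.

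The odd-degree case for surfaces is then immediate: a one-dimensional subrepresentation inside $\mathrm{Sym}^d(\mathbb{U}_4^\vee)$ with $d$ odd would force $z$ to act both as $-1$ and as $+1$, which is absurd. For an $\mathfrak{A}_6$-invariant pencil of surfaces of odd degree, the corresponding two-dimensional subrepresentation $V\subset \mathrm{Sym}^d(\mathbb{U}_4^\vee)$ has $z$ acting as $-\mathrm{Id}$. Hence $V$ cannot be a direct sum of two one-dimensional subrepresentations (on each of which $z$ would act trivially), and so $V$ is irreducible. Since $2.\mathfrak{A}_6$ is perfect, its image in $\mathrm{GL}(V)\cong \mathrm{GL}_2(\mathbb{C})$ lies in $\mathrm{SL}_2(\mathbb{C})$, and the kernel of the corresponding homomorphism $2.\mathfrak{A}_6\to \mathrm{GL}(V)$ is a normal subgroup of $2.\mathfrak{A}_6$ that does not contain $z$; since the only proper normal subgroup of $2.\mathfrak{A}_6$ is $\{1,z\}$, the kernel is trivial. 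This embeds $2.\mathfrak{A}_6$, a group of order $720$, into $\mathrm{SL}_2(\mathbb{C})$, contradicting the classical classification of finite subgroups of $\mathrm{SL}_2(\mathbb{C})$, whose largest member is the binary icosahedral group of order~$120$.

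Finally, the non-existence of $\mathfrak{A}_6$-invariant quadrics and quartics in $\mathbb{P}^3$ is immediate from Corollary~\ref{corollary:Sym-U4}(i),(ii), which state that $\mathrm{Sym}^2(\mathbb{U}_4^\vee)$ and $\mathrm{Sym}^4(\mathbb{U}_4^\vee)$ contain no one-dimensional $\mathfrak{A}_6$-subrepresentations. The only nonroutine step is the handling of pencils of odd degree, which requires invoking the classification of finite subgroups of $\mathrm{SL}_2(\mathbb{C})$; everything else reduces to a short combination of the perfectness of $2.\mathfrak{A}_6$ with the central character computation.
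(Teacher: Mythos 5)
Your proof is correct and follows essentially the same route as the paper: both reduce the statement to the action of the central element $z$ of $2.\mathfrak{A}_6$ on $\mathrm{Sym}^d(\mathbb{U}_4^\vee)$ for odd $d$, combined with the absence of non-trivial one-dimensional and of irreducible two-dimensional representations of $2.\mathfrak{A}_6$, and both quote Corollary~\ref{corollary:Sym-U4}(i),(ii) for the quadric and quartic cases. The only divergence is that the paper simply asserts that the unique two-dimensional representation of $2.\mathfrak{A}_6$ is a sum of two trivial ones, whereas you derive this from the classification of finite subgroups of $\mathrm{SL}_2(\mathbb{C})$ --- a valid argument, though your phrase about the ``largest member'' should be qualified to exclude the arbitrarily large cyclic and binary dihedral subgroups, which is harmless here since $2.\mathfrak{A}_6$ is perfect and non-solvable.
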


\begin{proof}
Recall that the only one-dimensional representation of the group
$2.\mathfrak{A}_6$ is the trivial representation.
Therefore, any $\mathfrak{A}_6$-invariant surface of odd degree $d$ in $\mathbb{P}^3$
gives rise to a trivial $2.\mathfrak{A}_6$-subrepresentation in
$R_d=\mathrm{Sym}^d(\mathbb{U}_4)$. On the other hand, the non-trivial central
element $z$ of $2.\mathfrak{A}_6$ acts on $R_d$ by a scalar matrix with diagonal
entries equal to~\mbox{$-1$}, which shows that $R_d$ does not contain
trivial $2.\mathfrak{A}_6$-representations.
Also, since the only two-dimensional representation of $2.\mathfrak{A}_6$
is the sum of two trivial representations, this implies that
there are no $\mathfrak{A}_6$-invariant pencils of surfaces of odd degree in $\mathbb{P}^3$.

The last assertion of the lemma follows from
Corollary~\ref{corollary:Sym-U4}(i),(ii).
\end{proof}

\begin{lemma}
\label{lemma:P3-A6-orbits}
Let $\Omega$ be an $\mathfrak{A}_6$-orbit in $\mathbb{P}^3$. Then $|\Omega|\geqslant 16$.
\end{lemma}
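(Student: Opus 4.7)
The plan is to combine the orbit--stabiliser theorem with the character data compiled in Section~\ref{section:representations}. Suppose that $\Omega\subset\mathbb{P}^3$ is an $\mathfrak{A}_6$-orbit with $|\Omega|=r\leqslant 15$, pick $P\in\Omega$, and let $H=\mathrm{Stab}_{\mathfrak{A}_6}(P)$ with preimage $\tilde H$ in $2.\mathfrak{A}_6$. Then $P$ being fixed by $\tilde H$ on $\mathbb{P}(\mathbb{U}_4)$ is equivalent to the existence of a one-dimensional $\tilde H$-subrepresentation of $\mathbb{U}_4$; on any such subrepresentation the central element $z$ must act as $-1$, because it does so on all of $\mathbb{U}_4$. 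By Remark~\ref{remark:A6-subgroups} the only indices at most $15$ of proper subgroups of $\mathfrak{A}_6$ are those of $\mathfrak{A}_5$, $F_{36}$ and $\mathfrak{S}_4$, so the only values of $r$ I have to rule out are $1$, $6$, $10$ and $15$.

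The cases $r\in\{1,6,10\}$ should fall out directly from Section~\ref{section:representations}. For $r=1$ one has $\tilde H=2.\mathfrak{A}_6$, and $\mathbb{U}_4$ is irreducible of dimension four. For $r=6$ the subgroup $\tilde H$ is conjugate to $2.\mathfrak{A}_5^{st}$ or $2.\mathfrak{A}_5^{nst}$, and in either case Corollary~\ref{corollary:characters} shows $\mathbb{U}_4|_{\tilde H}$ has no one-dimensional summand: it is either irreducible or splits as a sum of two non-isomorphic two-dimensional irreducibles. For $r=10$ one has $\tilde H=2.F_{36}$, on which $\mathbb{U}_4$ remains irreducible by Corollary~\ref{corollary:characters}(i).

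The main obstacle will be $r=15$, since $\mathfrak{S}_4$-subgroups of $\mathfrak{A}_6$ are not among those handled by Corollary~\ref{corollary:characters}. My plan there is to show that $\tilde H$ is the Schur cover $2.\mathfrak{S}_4$ rather than the split extension $\mathfrak{S}_4\times\mumu_2$. Any embedding $\mathfrak{S}_4\hookrightarrow\mathfrak{A}_6$ must send a $4$-cycle of $\mathfrak{S}_4$ to an order-$4$ even permutation of $\{1,\dots,6\}$, and the only such cycle type is $[4,2]$; one can see this concretely by letting $\mathfrak{S}_4$ act on the two-element subsets of $\{1,2,3,4\}$, under which $(1234)$ becomes a permutation of cycle type $[4,2]$ in $\mathfrak{S}_6$. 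By Table~\ref{table:characters} elements of this cycle type lift to order $8$ in $2.\mathfrak{S}_6$, so $\tilde H$ contains elements of order $8$; this rules out the split extension $\mathfrak{S}_4\times\mumu_2$, in which every element has order at most $4$. Hence $\tilde H\cong 2.\mathfrak{S}_4$ is a Schur cover of $\mathfrak{S}_4$, so that $z$ lies in $[\tilde H,\tilde H]$. Then every one-dimensional character of $\tilde H$ factors through the abelianisation of $\mathfrak{S}_4$ and is therefore trivial on $z$, contradicting $\chi(z)=-1$ on any $\tilde H$-invariant line in $\mathbb{U}_4$. This eliminates the remaining value of $r\leqslant 15$ and forces $|\Omega|\geqslant 16$.
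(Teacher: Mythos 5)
Your argument is correct and arrives at the bound the same way for $r\in\{6,10\}$ (orbit--stabiliser plus Corollary~\ref{corollary:characters}), but it handles the odd-length cases differently from the paper. The paper invokes Lemma~\ref{lemma:A6-invariants-in-P3}: since $z$ acts by $-1$ on $\mathrm{Sym}^d(\mathbb{U}_4)$ for odd $d$, there are no $\mathfrak{A}_6$-orbits of odd length at all, which kills $r=1$ and $r=15$ in one stroke and leaves only $r\in\{6,10\}$ to check. You instead attack $r=15$ head-on by identifying the preimage $\tilde H$ of a stabiliser $\mathfrak{S}_4$ as a Schur cover, using the order-$8$ lifts of $[4,2]$-elements from Table~\ref{table:characters}; this is a legitimate and more explicit alternative that stays entirely inside the character table. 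One small caution: $H^2(\mathfrak{S}_4,\mumu_2)$ has order four, so ruling out the split extension does not by itself force $\tilde H$ to be a Schur cover --- there is a third, non-split class (splitting over $\mathfrak{A}_4$) that you do not mention. Your order-$8$ element does in fact exclude it, since in that extension every lift of a $4$-cycle has order at most $4$; but it is cleaner to skip the classification altogether: your element $\tilde\sigma$ of order $8$ satisfies $\tilde\sigma^4=z$, so a character $\chi$ with $\chi(z)=-1$ would send $\tilde\sigma$ to a primitive $8$th root of unity and force the abelianisation of $\tilde H$ to contain an element of order $8$, which is absurd because that abelianisation has order at most $4$. With that repair your proof is complete; the paper's parity argument buys brevity and uniformity, while yours makes visible exactly which stabiliser is responsible for the threshold.
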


\begin{proof}
Lemma~\ref{lemma:A6-invariants-in-P3} implies that there are no
$\mathfrak{A}_6$-orbits of odd length in $\mathbb{P}^3$.
Thus, if $\Omega$ is an $\mathfrak{A}_6$-orbit in $\mathbb{P}^3$ of length
at most $15$, then by Remark~\ref{remark:A6-subgroups} a stabilizer
of its general point is isomorphic either to $\mathfrak{A}_5$ or to $F_{36}$.
Both of these cases are impossible by
Corollary~\ref{corollary:characters}.
\end{proof}

Actually, the minimal degree of an $\mathfrak{A}_6$-invariant surface in $\mathbb{P}^3$
equals~$8$ (see~\mbox{\cite[Lemma~3.7]{ChSh09b}}), and
the minimal length of an $\mathfrak{A}_6$-orbit in $\mathbb{P}^3$ equals~$36$
(see~\mbox{\cite[Lemma~3.8]{ChSh09b}}), but we will not need this here.

\begin{lemma}[{cf. \cite[Lemma~4.26]{ChSh09b}}]
\label{lemma:A6-on-g-9}
Let $C$ be a smooth irreducible $\mathfrak{A}_6$-invariant curve
of degree $9$ and genus $g$ in $\mathbb{P}^3$.
Then $g\neq 10$.
\end{lemma}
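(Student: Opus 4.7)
The plan is to argue by contradiction: assume $g=10$, then combine the representation theory of $2.\mathfrak{A}_6$ on $\mathbb{U}_4$ from Section~\ref{section:representations} with Riemann-Roch applied to the ideal sheaf $\mathcal{I}_C\subset\mathcal{O}_{\mathbb{P}^3}$. Routine computations give $\chi(\mathcal{I}_C(2))=1$ and $\chi(\mathcal{I}_C(3))=2$, and the higher cohomology is controlled via Serre duality on $C$ together with vanishing on $\mathbb{P}^3$.

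First I would establish that $\mathrm{Sym}^2(\mathbb{U}_4^\vee)=H^0(\mathbb{P}^3,\mathcal{O}(2))$ is an irreducible $10$-dimensional $\mathfrak{A}_6$-representation. By Corollary~\ref{corollary:Sym-U4}(i) it has no one-dimensional subrepresentation, so the only alternatives are irreducibility or a $5+5$ decomposition. The second option is ruled out by restricting to $\mathfrak{A}_5^{nst}$, where the decomposition is $3+3+4$ by Corollary~\ref{corollary:Sym-U4}(iii), while each irreducible five-dimensional $\mathfrak{A}_6$-representation restricts to $\mathfrak{A}_5^{nst}$ as either $5$ or $1+4$ (using Corollary~\ref{corollary:characters-P4} together with the outer automorphism of $\mathfrak{A}_6$ that swaps the two conjugacy classes of $\mathfrak{A}_5$), and no combination of these yields $3+3+4$. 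Then $H^0(\mathcal{I}_C(2))$, being an $\mathfrak{A}_6$-subrepresentation of $\mathrm{Sym}^2(\mathbb{U}_4^\vee)$, is either $0$ or the whole space; the second case is absurd since $|\mathcal{O}_{\mathbb{P}^3}(2)|$ is base-point-free, so $h^0(\mathcal{I}_C(2))=0$. Combined with $\chi(\mathcal{I}_C(2))=1$ and $h^2(\mathcal{I}_C(2))=h^0(K_C-2H|_C)\leqslant 1$, this forces $K_C\sim 2H|_C$.

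Next I would extract a contradiction from cubics. Since $3H|_C$ has degree $27>2g-2$, it is non-special, so $h^2(\mathcal{I}_C(3))=0$ and $h^0(\mathcal{I}_C(3))\geqslant\chi(\mathcal{I}_C(3))=2$. If $h^0(\mathcal{I}_C(3))=2$, then $H^0(\mathcal{I}_C(3))$ is a two-dimensional $\mathfrak{A}_6$-invariant subspace of $\mathrm{Sym}^3(\mathbb{U}_4^\vee)$, cutting out an $\mathfrak{A}_6$-invariant pencil of cubics and contradicting Lemma~\ref{lemma:A6-invariants-in-P3}; hence $h^0(\mathcal{I}_C(3))\geqslant 3$. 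Moreover, the $\mathfrak{A}_6$-invariant linear system $|H^0(\mathcal{I}_C(3))|$ has no fixed component, since $\mathbb{P}^3$ contains no $\mathfrak{A}_6$-invariant surface of degree $\leqslant 3$: degrees $1$ and $3$ are excluded by Lemma~\ref{lemma:A6-invariants-in-P3}, while degree $2$ is excluded by the irreducibility of $\mathrm{Sym}^2(\mathbb{U}_4^\vee)$ established above.

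Two general cubics $F_1,F_2$ in this fixed-component-free system therefore share no common surface factor, and Bezout yields a one-dimensional scheme-theoretic intersection of degree $9$. Since $C$ is a smooth irreducible curve of degree $9$ contained in it, the complete intersection $V(F_1,F_2)$ must coincide with $C$ scheme-theoretically (complete intersections being Cohen-Macaulay rule out embedded points). Consequently $\mathcal{I}_C=(F_1,F_2)$ and $h^0(\mathcal{I}_C(3))=2$, contradicting $h^0(\mathcal{I}_C(3))\geqslant 3$. The main obstacle is the first representation-theoretic step—showing that $\mathrm{Sym}^2(\mathbb{U}_4^\vee)$ is irreducible as an $\mathfrak{A}_6$-representation—which requires combining several corollaries from Section~\ref{section:representations}; everything after is a routine blend of Riemann-Roch for ideal sheaves, Bezout, and the nonexistence of low-degree $\mathfrak{A}_6$-invariant surfaces.
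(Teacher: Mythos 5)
Your proposal is correct, and its logical skeleton is the same as the paper's: both arguments come down to excluding two cases (the curve lies on a quadric; the curve forces an $\mathfrak{A}_6$-invariant pencil of cubics) by means of Lemma~\ref{lemma:A6-invariants-in-P3}. The difference is in how the dichotomy is obtained. The paper simply cites \cite[Example~6.4.3]{Har77}, which states that a smooth curve of degree $9$ and genus $10$ either lies on a unique quadric (uniqueness makes the quadric automatically $\mathfrak{A}_6$-invariant) or is a complete intersection of two cubics; you instead re-derive this content from scratch via Riemann--Roch for $\mathcal{I}_C(2)$ and $\mathcal{I}_C(3)$ together with the Cohen--Macaulay/Bezout argument identifying $V(F_1,F_2)$ with $C$. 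Along the way you prove something slightly stronger than the paper needs at the quadric step, namely that $\mathrm{Sym}^2(\mathbb{U}_4^\vee)$ is an irreducible $10$-dimensional $\mathfrak{A}_6$-representation (your elimination of the $5\oplus 5$ possibility by restricting to $\mathfrak{A}_5^{nst}$ and comparing with the $3+3+4$ decomposition of Corollary~\ref{corollary:Sym-U4}(iii) is sound), so that $C$ lies on no quadric whatsoever rather than merely on no invariant one. What your route buys is self-containedness and independence from the uniqueness assertion in Hartshorne's example; what it costs is length --- the paper's proof is three sentences. All the numerical inputs you use ($\chi(\mathcal{I}_C(2))=1$, $\chi(\mathcal{I}_C(3))=2$, $h^2(\mathcal{I}_C(3))=0$ since $\deg(3H|_C)=27>2g-2$, and $h^0(\mathcal{I}_C(3))=2$ for a complete intersection of two cubics because the ideal $(F_1,F_2)$ is saturated) check out.
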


\begin{proof}
Suppose that $g=10$.
Then it follows from \cite[Example~6.4.3]{Har77} that either $C$ is
contained in a unique quadric surface in $\mathbb{P}^3$,
or $C$ is a complete intersection of two cubic surfaces in $\mathbb{P}^3$.
The former case is impossible, since there are no $\mathfrak{A}_6$-invariant quadrics in~$\mathbb{P}^3$ by Lemma~\ref{lemma:A6-invariants-in-P3}.
The latter case is impossible, because there are no $\mathfrak{A}_6$-invariant
pencils of cubic surfaces in $\mathbb{P}^3$ by Lemma~\ref{lemma:A6-invariants-in-P3}.
\end{proof}

Recall that the group $\mathfrak{A}_6$ contains six standard subgroups isomorphic to $\mathfrak{A}_5$
and six non-standard subgroups isomorphic to $\mathfrak{A}_5$ (see the conventions made in Section~\ref{section:representations}).
Denote the former ones by $H_1^\prime,\ldots,H_6^\prime$,
and denote the latter ones by $H_1,\ldots,H_6$.
By  Corollary~\ref{corollary:characters}(ii), each group $H_i^\prime$ leaves invariant two lines $L_i^1$ and $L_i^2$ in $\mathbb{P}^3$.
Note that each group $H_i$ permutes transitively the lines $L_1^1,\ldots,L_6^1$ (respectively, $L_1^2,\ldots,L_6^2$).

Put $\mathcal{L}^1=L_1^1+\ldots+L_6^1$ and $\mathcal{L}^2=L_1^2+\ldots+L_6^2$.
Then the curves $\mathcal{L}^1$ and $\mathcal{L}^2$ are $\mathfrak{A}_6$-invariant,
and the curve $\mathcal{L}^1+\mathcal{L}^2$ is $\mathfrak{S}_6$-invariant.

\begin{lemma}
\label{lemma:six-lines}
The lines $L_1^1,\ldots,L_6^1$ (respectively, the lines $L_1^2,\ldots,L_6^2$) are pairwise disjoint.
Moreover, the curves $\mathcal{L}^1$ and $\mathcal{L}^2$ are disjoint.
\end{lemma}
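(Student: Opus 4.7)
The plan is to handle the three claims separately, each by producing an $\mathfrak{A}_6$-invariant object that cannot exist.

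First I would prove the pairwise disjointness of $L_1^1,\ldots,L_6^1$ exactly in the style of Lemma~\ref{lemma:5-disjoint}. The group $\mathfrak{A}_6$ acts on the six standard subgroups $H_1',\ldots,H_6'$ by conjugation, and this is the natural (in fact $4$-transitive) action on six points, hence at least doubly transitive on the set $\{L_1^1,\ldots,L_6^1\}$. If two of these lines met, every pair would meet, and then by a classical fact either all six lines lie in a common plane or all six pass through a common point. In the first case this plane is the unique plane containing any two meeting members of the configuration and so is $\mathfrak{A}_6$-invariant, giving an invariant linear form on $\mathbb{P}^3$ — ruled out by Lemma~\ref{lemma:A6-invariants-in-P3}. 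In the second case the common point is $\mathfrak{A}_6$-fixed, violating Lemma~\ref{lemma:P3-A6-orbits}. The same argument applies to $L_1^2,\ldots,L_6^2$.

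For the disjointness of $\mathcal{L}^1$ and $\mathcal{L}^2$, I would show that $L_i^1 \cap L_j^2 = \emptyset$ for every pair $(i,j)$. Suppose $P \in L_i^1 \cap L_j^2$. If $i=j$, then $P$ is fixed by $H_i'$ (the two lines $L_i^1$ and $L_i^2$ cannot be swapped by $H_i'$ since they correspond to \emph{non-isomorphic} two-dimensional summands of $\mathbb{U}_4|_{2.H_i'}$ by Corollary~\ref{corollary:characters}(ii)); but a fixed point would yield a one-dimensional subrepresentation of $\mathbb{U}_4|_{2.H_i'}$, contradicting the very same corollary. If $i \neq j$, then $P$ is fixed by $H_i' \cap H_j' \cong \mathfrak{A}_4$. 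The stabilizer of $P$ inside $H_i' \cong \mathfrak{A}_5$ therefore contains this $\mathfrak{A}_4$; since $\mathfrak{A}_4$ is maximal in $\mathfrak{A}_5$, it is either $\mathfrak{A}_4$ or all of $H_i'$, and the latter is excluded by the case $i=j$ we just handled. So the $H_i'$-orbit of $P$ inside $L_i^1$ has exactly length $5$.

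The final step is to rule this out: the two-dimensional summand of $\mathbb{U}_4|_{2.H_i'}$ cutting out $L_i^1$ is irreducible with the central element $z$ acting by $-1$, so the induced action of $H_i' \cong \mathfrak{A}_5$ on $L_i^1 \cong \mathbb{P}^1$ is faithful, i.e.\ the standard icosahedral action in $\mathrm{PGL}_2(\mathbb{C})$. Point stabilizers for such an action are cyclic (as subgroups of the stabilizer $\mathbb{C}^\ast$ of a point in $\mathrm{PGL}_2$), so the only $\mathfrak{A}_5$-orbit lengths on $\mathbb{P}^1$ are $12$, $20$, $30$ and $60$ — no orbit of length $5$. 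This contradiction completes the proof.

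The main obstacle is the mixed case $i \neq j$ in the last claim: the straightforward $\mathfrak{A}_6$-orbit counting on $\mathcal{L}^1 \cap \mathcal{L}^2$ is too loose, and the cleanest way through is the icosahedral orbit-length observation above, which pins the problem down at the level of a single stabilizer $H_i'$ acting on a single line $L_i^1$.
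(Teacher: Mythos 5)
Your proof is correct and follows essentially the same route as the paper: the first claim is handled by the double-transitivity argument of Lemma~\ref{lemma:5-disjoint}, and the second by showing that an intersection point of $\mathcal{L}^1$ and $\mathcal{L}^2$ would produce an impossibly small $H_i^\prime$-invariant subset of $L_i^1\cong\mathbb{P}^1$. The paper obtains this small set by letting $H_1^\prime$ sweep one intersection point across the five lines $L_2^2,\ldots,L_6^2$ transitively, whereas you identify the stabilizer $H_i^\prime\cap H_j^\prime\cong\mathfrak{A}_4$ of a single intersection point and invoke the cyclicity of point stabilizers for a faithful action on $\mathbb{P}^1$ --- the same contradiction reached from the opposite end.
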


\begin{proof}
We use an argument
similar to one in the proof of Lemma~\ref{lemma:5-disjoint}.
Suppose that some of the lines $L_1^1,\ldots,L_6^1$ have a common point.
Since the action of $\mathfrak{A}_6$ on the set~\mbox{$\{L_1^1,\ldots,L_6^1\}$}
is doubly transitive,
this implies that any two of the lines~\mbox{$L_1^1,\ldots,L_6^1$}
have a common point.
Therefore, either all lines $L_1^1,\ldots,L_6^1$ are coplanar, or all of them
pass through one point. Both of these cases are impossible since
$\mathbb{U}_4$ is an irreducible $2.\mathfrak{A}_6$-representation (see
Corollary~\ref{corollary:characters}(i)).
Therefore, the lines $L_1^1,\ldots,L_6^1$ are pairwise disjoint.
The same argument applies to the lines $L_1^2,\ldots,L_6^2$.

Suppose that some of the lines $L_1^1,\ldots,L_6^1$, say, $L_1^1$, intersects
some of the lines~\mbox{$L_1^2,\ldots,L_6^2$}.
Since the lines $L_1^1$ and $L_2^1$ are
disjoint by construction, we may assume that $L_1^1$ intersects~$L_2^2$.
Since the stabilizer $H_1^\prime\subset\mathfrak{A}_6$ of $L_1^1$ acts transitively on the lines
$L_2^2,\ldots,L_6^2$, we conclude that all five lines $L_2^2,\ldots,L_6^2$ intersect $L_1^1$.
Therefore, the line $L_1^1$ contains a subset of at most five points
that is invariant with respect to the group $H_1^\prime\cong\mathfrak{A}_5$, which is a contradiction.
Thus, $\mathcal{L}^1$ and $\mathcal{L}^2$ are disjoint.
\end{proof}

\begin{lemma}
\label{lemma:A6-invariant-curves-small-degree}
Let $C$ be an $\mathfrak{A}_6$-invariant curve in $\mathbb{P}^3$ of degree $d\leqslant 10$.
Then either $C=\mathcal{L}^1$ or $C=\mathcal{L}^2$.
\end{lemma}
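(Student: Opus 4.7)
The plan is to reduce the problem to two separate sub-problems: first ruling out any irreducible $\mathfrak{A}_6$-invariant curve of degree at most $10$, then analysing the $\mathfrak{A}_6$-orbits of irreducible components of a reducible such curve.

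For the irreducible case, since the $2.\mathfrak{A}_6$-representation $\mathbb{U}_4$ is irreducible by Corollary~\ref{corollary:characters}(i), the space $\mathbb{P}^3$ has no $\mathfrak{A}_6$-fixed points, so simplicity of $\mathfrak{A}_6$ yields a faithful $\mathfrak{A}_6$-action on the normalization $\widetilde{C}$. Regarding $C$ as $\mathfrak{A}_5^{nst}$-invariant and applying Lemma~\ref{lemma:A5-invariant-curves-small-degree}, I would obtain $g(\widetilde{C})\leqslant d^2/8+1\leqslant 13$, which makes Lemma~\ref{lemma:sporadic-genera} applicable and forces $g(\widetilde{C})=10$. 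The resulting inequality $10\leqslant d^2/8+1-|\mathrm{Sing}(C)|$ excludes $d\leqslant 8$ outright, and bounds $|\mathrm{Sing}(C)|\leqslant 3$ for $d\in\{9,10\}$; but every $\mathfrak{A}_6$-orbit in $\mathbb{P}^3$ has length at least $16$ by Lemma~\ref{lemma:P3-A6-orbits}, so $C$ must be smooth. Then Lemma~\ref{lemma:A6-on-g-9} eliminates $d=9$ and Lemma~\ref{lemma:A5-deg-10} eliminates $d=10$.

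For reducible $C$, I would decompose it into $\mathfrak{A}_6$-orbits of its irreducible components. An orbit of length $k$ whose members have degree $d'$ contributes $kd'\leqslant 10$ to $\deg C$. By Remark~\ref{remark:A6-subgroups}, one has $k\in\{1,6,10\}$ whenever $k\leqslant 14$, and $k\geqslant 15$ is impossible since $d'\geqslant 1$. The case $k=1$ is excluded by the first step, leaving only orbits of $6$ or $10$ lines. An orbit of $10$ lines would have each line stabilised by a subgroup of index $10$ in $\mathfrak{A}_6$, necessarily isomorphic to $F_{36}$; but $\mathbb{U}_4$ restricted to $2.F_{36}$ is irreducible by Corollary~\ref{corollary:characters}(i), so no such invariant line exists. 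Therefore $C$ consists entirely of orbits of $6$ lines, and since two such orbits would already give degree at least $12$, the curve $C$ is a single orbit of $6$ lines. The stabiliser of such a line is an $\mathfrak{A}_5$-subgroup of $\mathfrak{A}_6$ of order $60$, and must be \emph{standard} because a non-standard $\mathfrak{A}_5^{nst}$ again acts irreducibly on $\mathbb{U}_4$. By construction, the only lines fixed by a standard subgroup $H_i'$ are $L_i^1$ and $L_i^2$, whose $\mathfrak{A}_6$-orbits are $\mathcal{L}^1$ and $\mathcal{L}^2$, and so $C=\mathcal{L}^1$ or $C=\mathcal{L}^2$.

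The main obstacle is the irreducible case, where one must simultaneously deploy Lemma~\ref{lemma:A5-invariant-curves-small-degree} to bound the geometric genus, Lemma~\ref{lemma:sporadic-genera} to pin it at $10$, Lemma~\ref{lemma:P3-A6-orbits} to enforce smoothness, and finally Lemmas~\ref{lemma:A6-on-g-9} and~\ref{lemma:A5-deg-10} to handle the two surviving degrees $9$ and $10$. Once irreducibility is ruled out, the reducible analysis is essentially combinatorial, relying only on the irreducibility of $\mathbb{U}_4$ under $2.F_{36}$ and $2.\mathfrak{A}_5^{nst}$ to pin down the geometric meaning of a $6$-orbit of invariant lines.
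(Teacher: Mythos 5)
Your proposal is correct and follows essentially the same route as the paper's proof: the same genus bound from Lemma~\ref{lemma:A5-invariant-curves-small-degree} combined with Lemmas~\ref{lemma:sporadic-genera}, \ref{lemma:P3-A6-orbits}, \ref{lemma:A6-on-g-9} and~\ref{lemma:A5-deg-10} in the irreducible case, and the same orbit count via Remark~\ref{remark:A6-subgroups} and Corollary~\ref{corollary:characters} in the reducible case. The only difference is presentational: you spell out the reduction to a single transitive orbit of components and the identification of the six-line orbit with $\mathcal{L}^1$ or $\mathcal{L}^2$ in more detail than the paper, which simply asserts these steps.
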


\begin{proof}
Suppose first that $C$ is reducible.
We may assume that  $\mathfrak{A}_6$ permutes the irreducible components of $C$ transitively.
Thus, $C$ has either $6$ or $10$ irreducible components by
Remark~\ref{remark:A6-subgroups},
and these irreducible components are lines.
By Remark~\ref{remark:A6-subgroups} and
Corollary~\ref{corollary:characters} the latter case is impossible,
and in the former case
one has either $C=\mathcal{L}^1$ or $C=\mathcal{L}^2$.

Therefore, we assume that the curve $C$ is irreducible.
Let $g$ be the genus of the normalization of the curve $C$.
We have
\begin{equation}
\label{equation:A6-pa}
g\leqslant\frac{d^2}{8}+1-|\mathrm{Sing}(C)|\leqslant 13-|\mathrm{Sing}(C)|
\end{equation}
by Lemma~\ref{lemma:A5-invariant-curves-small-degree}.
This implies that the curve $C$ is smooth, because $\mathbb{P}^3$ does not contain $\mathfrak{A}_6$-orbits of length less than~$16$ by Lemma~\ref{lemma:P3-A6-orbits}.

If $d\leqslant 8$, then \eqref{equation:A6-pa} gives $g\leqslant 9$.
This is impossible by Lemma~\ref{lemma:sporadic-genera}

If~\mbox{$d=9$}, then~\eqref{equation:A6-pa} gives $g\leqslant 11$,
so that $g=10$ by Lemma~\ref{lemma:sporadic-genera}.
This is impossible by Lemma~\ref{lemma:A6-on-g-9}.

Therefore, we see that $d=10$.
Thus, \eqref{equation:A6-pa} gives $g\leqslant 13$, so that $g=10$ by Lemma~\ref{lemma:sporadic-genera}.
The latter is impossible by Lemma~\ref{lemma:A5-deg-10}.
\end{proof}

Denote by $\mathcal{M}$ the linear system on $\mathbb{P}^3$
consisting of all quartic surfaces passing through the~lines $L_{1}^1,\ldots,L_{6}^1$.
Then $\mathcal{M}$ is not empty. In fact, its dimension is at least four by parameter count.
Moreover, the linear system $\mathcal{M}$ does not have base components by Lemma~\ref{lemma:A6-invariants-in-P3}.

\begin{lemma}
\label{lemma:base-locus-six-lines}
The base locus of $\mathcal{M}$ does not contain curves
except the lines~\mbox{$L_{1}^1,\ldots,L_{6}^1$}.
Moreover, a general surface in $\mathcal{M}$ is smooth at a general
point of each of the lines~\mbox{$L_{1}^1,\ldots,L_{6}^1$}.
\end{lemma}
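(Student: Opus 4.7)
The plan is to treat the smoothness statement and the base-curve statement separately, using the former to bound intersection multiplicities in the latter.

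For smoothness, I would first check that $\mathcal{M}$ has no fixed component and is not composed of a pencil, so that Bertini's theorem applies. Absence of a fixed component is Lemma~\ref{lemma:A6-invariants-in-P3}, since any fixed component of a $\mathfrak{A}_6$-invariant linear system is itself $\mathfrak{A}_6$-invariant. The only dimensionally admissible way for $\mathcal{M}$ to be composed of a pencil (given $\dim \mathcal{M} \ge 4$) is a pencil of planes through some axis $\ell$, but then every base curve of $\mathcal{M}$ would have to be contained in $\ell$, whereas $\mathcal{L}^1 \subseteq \mathrm{Bs}(\mathcal{M})$ is a union of six skew lines. Thus a general $S \in \mathcal{M}$ is irreducible. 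If $S$ were singular at a general point of some $L_i^1$, then by $\mathfrak{A}_6$-invariance one would have $\mathrm{mult}_{L_j^1}(S) \ge 2$ for every $j$, and a general plane section of $S$ would be an irreducible plane quartic with six points of multiplicity $\ge 2$. This contradicts the standard bound $\sum_P \binom{m_P}{2} \le \binom{3}{2} = 3$ for irreducible plane quartics.

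For the base-curve assertion, suppose that some curve $C \not\subseteq \mathcal{L}^1$ lies in $\mathrm{Bs}(\mathcal{M})$, and let $R$ denote the union of those components of the $\mathfrak{A}_6$-orbit of $C$ that are not components of $\mathcal{L}^1$. For two generic $S_1, S_2 \in \mathcal{M}$, the smoothness just established implies that $\mathcal{L}^1$ appears with multiplicity one in the $1$-cycle $S_1 \cdot S_2$ of degree $16$, contributing $6$. Hence $\deg R \le 10$, and Lemma~\ref{lemma:A6-invariant-curves-small-degree} forces the reduced support of $R$ to equal $\mathcal{L}^2$. It then remains to rule out $\mathcal{L}^2 \subseteq \mathrm{Bs}(\mathcal{M})$, which I would do by exhibiting an explicit $S \in \mathcal{M}$ omitting some line of $\mathcal{L}^2$. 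The natural candidate is $Q \cup Q'$, where $Q$ is the unique smooth quadric containing three skew lines $L_{i_1}^1, L_{i_2}^1, L_{i_3}^1$ and $Q'$ the analogous quadric for the complementary triple, both of which exist by Lemma~\ref{lemma:six-lines}; this reducible quartic visibly lies in $\mathcal{M}$, and by $\mathfrak{A}_6$-equivariance of the base locus it suffices to show that for some partition and some index $k$, the line $L_k^2$ lies in neither $Q$ nor $Q'$.

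The main obstacle is precisely this last verification: the configuration of the twelve skew lines in $\mathcal{L}^1 \cup \mathcal{L}^2$ is rigid, and one must check (by a short combinatorial/geometric analysis using Lemmas~\ref{lemma:six-lines} and~\ref{lemma:A6-invariant-curves-small-degree}) that some quadric on a triple from $\mathcal{L}^1$ fails to contain the entire family $\mathcal{L}^2$. A representation-theoretic alternative would be to establish $h^0(\mathcal{I}_{\mathcal{L}^1}(4)) = 5$ and $h^0(\mathcal{I}_{\mathcal{L}^1 \cup \mathcal{L}^2}(4)) < 5$, for example by identifying $H^0(\mathcal{I}_{\mathcal{L}^1}(4))$ and $H^0(\mathcal{I}_{\mathcal{L}^2}(4))$ with distinct isotypic components of $\mathrm{Sym}^4(\mathbb{U}_4^\vee)$ as $\mathfrak{A}_6$-modules, so that their intersection is zero.
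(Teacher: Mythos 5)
Your overall strategy coincides with the paper's: decompose the cycle $M_1\cdot M_2$ for two general members of $\mathcal{M}$, use the total degree $16$ to bound the degree of the residual $\mathfrak{A}_6$-invariant base curve by $10$, and invoke Lemma~\ref{lemma:A6-invariant-curves-small-degree}. Your smoothness argument, however, is genuinely different and heavier than the paper's: the paper simply notes that the common multiplicity $m$ of the lines $L_i^1$ in $M_1\cdot M_2$ satisfies $m\leqslant 2$, whereas singularity of the general member along $L_i^1$ would force $m\geqslant 4$; this avoids Bertini irreducibility and the genus bound for plane quartics entirely. Your version is correct as far as it goes (though a general $S\in\mathcal{M}$ is not itself $\mathfrak{A}_6$-invariant --- what you need is that $\mathrm{mult}_{L_i^1}(S)$ is independent of $i$ for general $S$ because $\mathfrak{A}_6$ permutes the lines and preserves $\mathcal{M}$; and smoothness gives multiplicity $\geqslant 1$, not $=1$, in $S_1\cdot S_2$, which is all you actually use).

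The genuine gap is the step you yourself flag: excluding $\mathcal{L}^2$ from the base locus. This case is unavoidable --- Lemma~\ref{lemma:A6-invariant-curves-small-degree} leaves exactly the two curves $\mathcal{L}^1$ and $\mathcal{L}^2$, and nothing in the degree count distinguishes them (the paper's own proof is silent here, deducing $d=0$ directly, so you have correctly isolated a real subtlety rather than missed an easy point). Your first proposed repair is the right one, but the deferred ``short verification'' is the entire content, and it can be closed as follows. Suppose every $L_j^2$ lies in $Q_{ijk}\cup Q_{lmn}$ for each of the ten partitions, where $Q_{ijk}$ is the unique smooth quadric through the skew lines $L_i^1,L_j^1,L_k^1$. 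Counting incidences, each $Q_T$ contains at least three of the $L_j^2$; since $L_i^1$ and $L_i^2$ are both fixed by the same standard subgroup $H_i^\prime$, the stabilizer of the triple $T$ acts on the indices of the $L_j^2$ with orbits $T$ and its complement, so the set of $j$ with $L_j^2\subset Q_T$ is either $T$, or its complement, or everything; the last option forces $Q_{123}=Q_{124}$ and ultimately an $\mathfrak{A}_6$-invariant quadric, contradicting Lemma~\ref{lemma:A6-invariants-in-P3}. In the surviving cases the distinct quadrics $Q_{123}$ and $Q_{124}$ share four pairwise skew lines (two from $\mathcal{L}^1$ and two from $\mathcal{L}^2$, skew by Lemma~\ref{lemma:six-lines}); but $Q_{124}$ cuts out on $Q_{123}\cong\mathbb{P}^1\times\mathbb{P}^1$ a curve of bidegree $(2,2)$, which cannot contain four disjoint lines of a single ruling. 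Finally, a caution on your representation-theoretic alternative: it is not enough that $H^0(\mathcal{I}_{\mathcal{L}^1}(4))$ and $H^0(\mathcal{I}_{\mathcal{L}^2}(4))$ lie in ``distinct isotypic components,'' since a priori they could be isomorphic irreducible submodules of one isotypic component of multiplicity at least two, in which case their being distinct subspaces requires a further argument.
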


\begin{proof}
Denote by $Z$ the union of the curves that are contained in the base locus of $\mathcal{M}$ and are different from the lines $L_{1}^1,\ldots,L_{6}^1$.
Then $Z$ is a (possibly empty) $\mathfrak{A}_6$-invariant curve.
Denote its degree by $d$. Pick two general surfaces $M_1$ and $M_2$ in $\mathcal{M}$.
Then
$$
M_1\cdot M_2=Z+m\mathcal{L}^1+\Delta,
$$
where $m$ is a positive integer,
and $\Delta$ is an effective one-cycle on $\mathbb{P}^3$
that contains none of the lines $L_{1}^1,\ldots,L_{6}^1$.
Note that $\Delta$ may contain irreducible components of the curve $Z$.
Let $\Pi$ be a plane in $\mathbb{P}^3$.
Then
$$
16=\Pi\cdot M_1\cdot M_2=\Pi\cdot Z+m\Pi\cdot\mathcal{L}^1+\Pi\cdot\Delta=d+6m+\Pi\cdot\Delta\leqslant d+6m,
$$
which implies that $m\leqslant 2$ and $d\leqslant 10$.
By Lemma~\ref{lemma:A6-invariant-curves-small-degree}, we have $d=0$, so that $Z$ is empty.
Since
$$
2\geqslant m\geqslant\mathrm{mult}_{L_{i}^1}\big(M_1\big)\mathrm{mult}_{L_{i}^1}\big(M_2\big),
$$
we see that a general surface in $\mathcal{M}$ is smooth at a general point of $L_i^1$.
\end{proof}

Let $\alpha\colon U\to\mathbb{P}^{3}$ be a blow up along the lines
$L_{1}^1,\ldots,L_{6}^1$.
Then the action of $\mathfrak{A}_6$ lifts to $U$.
Denote by $E_1,\ldots,E_6$ the $\alpha$-exceptional surfaces that are
mapped to~\mbox{$L_{1}^1,\ldots,L_{6}^1$}, respectively.
Denoting by $\Pi$ a plane in $\P^3$, we compute
$$
(-K_U)^3=\left(4\alpha^*\Pi-\sum\limits_{i=1}^6 E_i\right)^3=
64\left(\alpha^*\Pi\right)^3+12\sum\limits_{i=1}^6 \alpha^*\Pi\cdot E_i^2-
\sum\limits_{i=1}^6 E_i^3=64-72+12=4.
$$

\begin{lemma}
\label{lemma:twisted-diagonal-action}
The action of the stabilizer $H_i^\prime\cong\mathfrak{A}_5$ in $\mathfrak{A}_6$
of the line $L_i^1$ on the surface~\mbox{$E_i\cong\mathbb{P}^1\times\mathbb{P}^1$}
is twisted diagonal, i.e., $E_i$ is identified with $\mathbb{P}(\mathbb{U}_2)\times\mathbb{P}(\mathbb{U}_2^\prime)$,
where~$\mathbb{U}_2$ and~$\mathbb{U}_2^\prime$ are different two-dimensional irreducible representations of the group $2.\mathfrak{A}_5$.
\end{lemma}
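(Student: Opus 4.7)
The plan is to exploit the equivariant splitting of $\mathbb{U}_4$ over $H_i^\prime$ and then compute the normal bundle of $L_i^1$ equivariantly, reading off the splitting of $E_i$ directly from this computation.

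First I would recall why the conventions of Section~\ref{section:representations} apply. By construction the $H_i^\prime$ are the standard $\mathfrak{A}_5$-subgroups of $\mathfrak{A}_6$, so the preimage of $H_i^\prime$ in $2.\mathfrak{A}_6$ is a standard subgroup $2.\mathfrak{A}_5^{st}$. Corollary~\ref{corollary:characters}(ii) then yields an equivariant decomposition
\[
\mathbb{U}_4 \cong \mathbb{U}_2 \oplus \mathbb{U}_2^\prime
\]
into two non-isomorphic two-dimensional irreducible $2.\mathfrak{A}_5^{st}$-representations. The two $H_i^\prime$-invariant lines in $\mathbb{P}^3 = \mathbb{P}(\mathbb{U}_4)$ guaranteed by Corollary~\ref{corollary:characters}(ii) can be nothing other than $\mathbb{P}(\mathbb{U}_2)$ and $\mathbb{P}(\mathbb{U}_2^\prime)$, so after renaming we may assume $L_i^1 = \mathbb{P}(\mathbb{U}_2)$.

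Next I would compute the normal bundle $N = N_{L_i^1/\mathbb{P}^3}$ equivariantly. For an arbitrary linear embedding of projective spaces $\mathbb{P}(W) \subset \mathbb{P}(V)$ the pointwise identification $T_{[w]}\mathbb{P}(V) \cong \mathrm{Hom}(\langle w\rangle, V/\langle w\rangle)$ yields the standard formula
\[
N_{\mathbb{P}(W)/\mathbb{P}(V)} \cong \mathcal{O}_{\mathbb{P}(W)}(1) \otimes (V/W),
\]
which is manifestly equivariant with respect to any group acting linearly on $V$ preserving $W$. Applied to $W = \mathbb{U}_2 \subset \mathbb{U}_4 = V$, and using $V/W \cong \mathbb{U}_2^\prime$ as $2.\mathfrak{A}_5^{st}$-representations, this gives
\[
N_{L_i^1/\mathbb{P}^3} \cong \mathcal{O}_{L_i^1}(1) \otimes \mathbb{U}_2^\prime
\]
as an equivariant rank-two bundle on $L_i^1 = \mathbb{P}(\mathbb{U}_2)$.

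Finally, projectivizing this bundle, I would use the fact that twisting a vector bundle by a line bundle does not change its projectivization; hence
\[
E_i = \mathbb{P}\bigl(N_{L_i^1/\mathbb{P}^3}\bigr) \cong \mathbb{P}(\mathbb{U}_2^\prime) \times L_i^1 = \mathbb{P}(\mathbb{U}_2) \times \mathbb{P}(\mathbb{U}_2^\prime)
\]
as a $2.\mathfrak{A}_5^{st}$-variety, with the first factor carrying the action via $\mathbb{U}_2$ and the second via $\mathbb{U}_2^\prime$. Since $\mathbb{U}_2$ and $\mathbb{U}_2^\prime$ are non-isomorphic, this is precisely the twisted diagonal action claimed. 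There is no real obstacle here; the only point requiring mild care is tracking that all identifications above are equivariant and that the central element $z \in 2.\mathfrak{A}_5^{st}$, which acts by $-1$ on both $\mathbb{U}_2$ and $\mathbb{U}_2^\prime$, acts trivially on the projectivizations, so that the action indeed descends to $\mathfrak{A}_5 \cong H_i^\prime$.
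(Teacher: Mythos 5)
Your proof is correct and follows essentially the same route as the paper, which simply cites Corollary~\ref{corollary:characters}(ii) for the splitting $\mathbb{U}_4\cong\mathbb{U}_2\oplus\mathbb{U}_2^\prime$ over the standard subgroup $2.\mathfrak{A}_5^{st}$ and leaves the equivariant normal-bundle computation implicit. You have merely spelled out the standard details (the identification of the invariant lines with $\mathbb{P}(\mathbb{U}_2)$ and $\mathbb{P}(\mathbb{U}_2^\prime)$, the formula $N_{\mathbb{P}(W)/\mathbb{P}(V)}\cong\mathcal{O}_{\mathbb{P}(W)}(1)\otimes(V/W)$, and the descent through the center), all of which are accurate.
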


\begin{proof}
This follows from Corollary~\ref{corollary:characters}(ii).
\end{proof}

Let us denote by $\mathcal{M}_U$ the proper transform of the linear system $\mathcal{M}$ on the threefold $U$.
Then~\mbox{$\mathcal{M}_U\sim -K_U$} by Lemma~\ref{lemma:base-locus-six-lines}.

\begin{lemma}
\label{lemma:base-locus-six-lines-blow-up}
The linear system $\mathcal{M}_U$ is base point free.
\end{lemma}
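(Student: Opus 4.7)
The plan is to show that $\mathrm{Bs}(\mathcal{M}_U)=\emptyset$ by ruling out one-dimensional and isolated components of the base locus separately. The base locus is $\mathfrak{A}_6$-invariant since $\mathcal{M}_U$ is, and the two main inputs are the intersection number $(-K_U)^3=4$ already computed and the description of $\mathrm{Bs}(\mathcal{M})$ in Lemma~\ref{lemma:base-locus-six-lines}.

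First, no base curves. A one-dimensional component $C\subset\mathrm{Bs}(\mathcal{M}_U)$ either projects under $\alpha$ to a curve in $\mathbb{P}^3$ not contained in $\mathcal{L}^1$, contradicting Lemma~\ref{lemma:base-locus-six-lines}, or is contained in the exceptional locus $E_1\cup\ldots\cup E_6$. By the multiplicity-one part of Lemma~\ref{lemma:base-locus-six-lines}, no $E_i$ itself is a base component, so $C$ cuts out a proper $H_i^\prime$-invariant curve $C_i$ on each $E_i$. An intersection-theoretic calculation based on the formulas $(E_i|_{E_i})^2=-2$ and $\alpha^*\Pi\cdot E_i|_{E_i}=-1$ gives $\mathcal{M}_U|_{E_i}$ bidegree $(1,3)$ on $E_i\cong\mathbb{P}(\mathbb{U}_2)\times\mathbb{P}(\mathbb{U}_2^\prime)$. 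Using Lemma~\ref{lemma:twisted-diagonal-action} together with the fact that the minimal length of an $\mathfrak{A}_5$-orbit on $\mathbb{P}(\mathbb{U}_2)$ or $\mathbb{P}(\mathbb{U}_2^\prime)$ is $12$, no non-trivial $H_i^\prime$-invariant effective divisor of bidegree $(a,b)$ with $0\leq a\leq 1$, $0\leq b\leq 3$ exists apart from the unique bidegree-$(1,1)$ graph $\Gamma_i$ of an $\mathfrak{A}_5$-equivariant projective identification $\mathbb{P}(\mathbb{U}_2)\cong\mathbb{P}(\mathbb{U}_2^\prime)$. To exclude $\Gamma_i$ as a base component, one verifies the dimension identities $\dim H^0(U,\mathcal{M}_U)\geq 5$ (parameter count for quartics through six disjoint lines) and $\dim H^0(U,\mathcal{M}_U-E_i)\leq 1$; together they force the image of $H^0(U,\mathcal{M}_U)$ in $H^0(E_i,\mathcal{M}_U|_{E_i})$ to have dimension at least~$4$, whereas if $\Gamma_i$ were a fixed component the image would lie in the three-dimensional space $H^0(E_i,\mathcal{O}_{E_i}(0,2))$.

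Second, no isolated base points. Three general members of $\mathcal{M}_U$ meet in a zero-dimensional scheme of length $(-K_U)^3=4$, bounding the number of isolated base points by four. On the other hand, every $\mathfrak{A}_6$-orbit on $\mathbb{P}^3$ has length at least $16$ by Lemma~\ref{lemma:P3-A6-orbits}, and every $\mathfrak{A}_6$-orbit meeting $\bigcup E_i$ has length at least $6\cdot 12=72$, since $[\mathfrak{A}_6:H_i^\prime]=6$ and the minimal length of an $H_i^\prime$-orbit on $E_i$ under the twisted diagonal action of Lemma~\ref{lemma:twisted-diagonal-action} is $12$ by the icosahedral orbit analysis. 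Both bounds exceed four, so no isolated base points are possible.

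The main obstacle is the elimination of $\Gamma_i$: it is a genuine $H_i^\prime$-invariant curve on $E_i$, so the orbit-counting argument used for isolated points fails. The dimensional argument sketched above is the cleanest route, but it rests on a non-trivial parameter count showing that few quartics in $\mathbb{P}^3$ passing through all six $L_j^1$ can be doubly tangent along $L_i^1$; alternatively, one may exhibit an explicit member of $\mathcal{M}$ whose proper transform on $U$ does not contain $\Gamma_i$.
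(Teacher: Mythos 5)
Your overall strategy coincides with the paper's: reduce any base curve of $\mathcal{M}_U$ to an $H_i^\prime$-invariant curve on some exceptional divisor $E_i$ via Lemma~\ref{lemma:base-locus-six-lines}, compute that $\mathcal{M}_U\vert_{E_i}$ has bidegree $(1,3)$, rule out invariant curves of bidegree $(a,b)$ with $a\leqslant 1$, $b\leqslant 3$ using the twisted diagonal action of Lemma~\ref{lemma:twisted-diagonal-action}, and then bound the number of isolated base points by $(-K_U)^3=4$ and compare with the minimal orbit length from Lemma~\ref{lemma:P3-A6-orbits}. The second half of your argument is fine.

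The problem is your treatment of the bidegree $(1,1)$ case. You assert that there is a unique $H_i^\prime$-invariant curve $\Gamma_i$ of bidegree $(1,1)$, namely the graph of an $\mathfrak{A}_5$-equivariant identification $\mathbb{P}(\mathbb{U}_2)\cong\mathbb{P}(\mathbb{U}_2^\prime)$. No such identification, and no such curve, exists: $\mathbb{U}_2$ and $\mathbb{U}_2^\prime$ are \emph{non-isomorphic} irreducible representations of $2.\mathfrak{A}_5$, and since $2.\mathfrak{A}_5$ is perfect they remain inequivalent as projective representations; by Schur's lemma $H^0(\mathcal{O}_{E_i}(1,1))\cong\mathbb{U}_2^\vee\otimes\mathbb{U}_2^{\prime\vee}$ contains no one-dimensional subrepresentation (a character computation shows it is the irreducible four-dimensional representation of $\mathfrak{A}_5$). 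This is precisely the content of ``twisted'' in Lemma~\ref{lemma:twisted-diagonal-action}, and it is what the paper's citation of \cite[Lemmas~6.4.2(i) and~6.4.11(o)]{CheltsovShramov} delivers: a twisted diagonal $\mathfrak{A}_5$-action on $\mathbb{P}^1\times\mathbb{P}^1$ admits no invariant curve of bidegree $(a,b)$ with $a\leqslant 1$ and $b\leqslant 3$ (the $(0,b)$ and $(a,0)$ cases follow from the minimal orbit length $12$ on $\mathbb{P}^1$, and the $(1,b)$ cases from $\mathrm{Hom}_{2.\mathfrak{A}_5}(\mathbb{U}_2,\mathrm{Sym}^b\mathbb{U}_2^{\prime\vee})=0$). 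Because you believe $\Gamma_i$ exists, you are forced into the auxiliary dimension count $h^0(\mathcal{O}_U(-K_U-E_i))\leqslant 1$, which you explicitly leave unproved (``rests on a non-trivial parameter count''); as written this is a genuine gap in your argument. The repair, however, is immediate: delete $\Gamma_i$ from the discussion, since the representation theory already excludes it, and the proof closes exactly as in the paper.
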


\begin{proof}
Let us first show that $\mathcal{M}_U$ is free from base curves.
Suppose that the base locus of the linear system $\mathcal{M}_U$ contains some curves.
Then each of these curves is contained in some of the $\alpha$-exceptional surfaces by Lemma~\ref{lemma:base-locus-six-lines}.
Denote by $Z$ the union of all such curves that are contained in $E_1$.
Then $Z$ is an $H_1^\prime$-invariant curve.
For some non-negative integers~$a$ and~$b$, one has
$$
Z\sim as+bl,
$$
where $s$ is a section of the natural projection $E_1\to L_1^1$ such that $s^2=0$ on $E_1$, and $l$ is a fiber of this projection.
On the other hand, we have
$$
\mathcal{M}_U\vert_{E_1}\sim -K_U\vert_{E_1}\sim s+3l.
$$
This gives $a\leqslant 1$ and $b\leqslant 3$.
Since the action of $H_1^\prime$ on the surface $E_1$ is twisted diagonal by Lemma~\ref{lemma:twisted-diagonal-action},
the latter is impossible by \cite[Lemma~6.4.2(i)]{CheltsovShramov} and \cite[Lemma~6.4.11(o)]{CheltsovShramov}.

We see that  $\mathcal{M}_U$ is free from base curves.
Since $\mathcal{M}_U\sim -K_{U}$, the linear system $\mathcal{M}_U$ cannot have more than $-K_U^3=4$ base points.
By Lemma~\ref{lemma:P3-A6-orbits},
this implies that $\mathcal{M}_U$ is base point free.
\end{proof}

\begin{corollary}
\label{corollary:base-locus-six-lines-blow-up}
The base locus of the linear system $\mathcal{M}$ consists of the lines
$L_{1}^1,\ldots,L_{6}^1$.
\end{corollary}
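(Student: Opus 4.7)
The plan is to derive this corollary directly from Lemma~\ref{lemma:base-locus-six-lines-blow-up}, which has just been proved. By the very definition of $\mathcal{M}$, every surface in $\mathcal{M}$ contains each of the lines $L_1^1,\ldots,L_6^1$, so the inclusion
\[
L_1^1\cup\ldots\cup L_6^1\subseteq\mathrm{Bs}(\mathcal{M})
\]
is automatic. All that remains is the reverse inclusion, that is, the absence of isolated base points of $\mathcal{M}$ off the six lines.

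To establish this, I would exploit the fact that the blow up $\alpha\colon U\to\mathbb{P}^3$ restricts to an isomorphism over the complement $V=\mathbb{P}^3\setminus(L_1^1\cup\ldots\cup L_6^1)$ of the blown up locus. Over $V$ the pullback of any $M\in\mathcal{M}$ coincides with its proper transform, so $\alpha^{-1}(\mathcal{M}|_V)$ is identified with $\mathcal{M}_U|_{\alpha^{-1}(V)}$. Given a point $P\in V$, let $\tilde{P}=\alpha^{-1}(P)\in U$. By Lemma~\ref{lemma:base-locus-six-lines-blow-up} the linear system $\mathcal{M}_U$ is base point free, so we may pick $\tilde{M}\in\mathcal{M}_U$ with $\tilde{P}\notin\tilde{M}$; pushing forward, $M=\alpha_*\tilde{M}$ is a member of $\mathcal{M}$ that avoids $P$. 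Hence $P\notin\mathrm{Bs}(\mathcal{M})$.

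Combining the two inclusions yields $\mathrm{Bs}(\mathcal{M})=L_1^1\cup\ldots\cup L_6^1$. There is no real obstacle at this step: it is a formal consequence of the base point freeness of $\mathcal{M}_U$ on the blow up, and the genuine work has already been done in Lemmas~\ref{lemma:base-locus-six-lines} and~\ref{lemma:base-locus-six-lines-blow-up}.
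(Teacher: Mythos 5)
Your argument is correct and is exactly the (implicit) reasoning behind the paper's statement: the paper gives no separate proof, treating the corollary as an immediate consequence of Lemma~\ref{lemma:base-locus-six-lines-blow-up}, since $\alpha$ is an isomorphism away from the six lines and proper transforms agree with total transforms there. Your write-up just makes this formal step explicit; there is no gap.
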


By Lemma~\ref{lemma:base-locus-six-lines-blow-up}, the divisor $-K_U$ is nef.
Since $-K_U^3=4$, it is also big.
Thus, we have $$
h^1\Big(\mathcal{O}_U\big(-K_U\big)\Big)=h^2\Big(\mathcal{O}_U\big(-K_U\big)\Big)=0
$$
by the Kawamata--Viehweg vanishing theorem (see \cite{Kawamata-vanishing}).
Hence, the Riemann--Roch formula gives
\begin{equation}\label{eq:A6-h0}
h^0(\mathcal{O}_U(-K_U))=5.
\end{equation}
In particular, we see that $|-K_U|=\mathcal{M}_U$.

\begin{lemma}
\label{lemma:anticanonical-linear-system-W5}
The $\mathfrak{A}_6$-representation $H^0(\mathcal{O}_U(-K_U))$
is irreducible.
\end{lemma}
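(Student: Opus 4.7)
The plan is to combine the representation-theoretic dichotomy of Remark~\ref{remark:pohuj} with the invariant-theoretic obstruction of Lemma~\ref{lemma:A6-invariants-in-P3}. By~\eqref{eq:A6-h0}, $H^0(\mathcal{O}_U(-K_U))$ is a five-dimensional $\mathfrak{A}_6$-representation. According to Remark~\ref{remark:pohuj}, any five-dimensional $\mathfrak{A}_6$-representation that contains no one-dimensional subrepresentation is automatically irreducible. So the goal reduces to showing $H^0(\mathcal{O}_U(-K_U))$ has no one-dimensional subrepresentation.

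Since $\mathfrak{A}_6$ is simple and non-abelian, its only one-dimensional representation is the trivial one. Thus, it is enough to prove $H^0(\mathcal{O}_U(-K_U))^{\mathfrak{A}_6}=0$. The canonical line bundle $\mathcal{O}_U(-K_U)$ carries a canonical $\mathfrak{A}_6$-linearization, and a nonzero $\mathfrak{A}_6$-invariant section would cut out an $\mathfrak{A}_6$-invariant divisor in $|-K_U|$. Because $|-K_U|=\mathcal{M}_U$ is the proper transform of $\mathcal{M}$, and $\alpha$ is $\mathfrak{A}_6$-equivariant, pushing this divisor down by $\alpha$ would produce an $\mathfrak{A}_6$-invariant quartic surface in $\mathbb{P}^3$ passing through the lines $L_1^1,\ldots,L_6^1$.

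However, Lemma~\ref{lemma:A6-invariants-in-P3} already rules out the existence of any $\mathfrak{A}_6$-invariant quartic surface in $\mathbb{P}^3$ whatsoever. This contradiction shows $H^0(\mathcal{O}_U(-K_U))^{\mathfrak{A}_6}=0$, hence $H^0(\mathcal{O}_U(-K_U))$ contains no trivial (and therefore no one-dimensional) subrepresentation. By the criterion in Remark~\ref{remark:pohuj}, the representation is irreducible. No obstacle of substance arises: both ingredients are available, and the argument is a straightforward synthesis of representation theory with the invariant-theoretic input already established.
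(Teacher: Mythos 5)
Your proof is correct and follows essentially the same route as the paper: both reduce irreducibility to the absence of one-dimensional subrepresentations via Remark~\ref{remark:pohuj}, and then exclude an invariant anticanonical divisor using the nonexistence of $\mathfrak{A}_6$-invariant quartic surfaces from Lemma~\ref{lemma:A6-invariants-in-P3}. Your write-up merely makes explicit the intermediate steps (simplicity of $\mathfrak{A}_6$, the canonical linearization, pushing the divisor down by $\alpha$) that the paper leaves implicit.
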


\begin{proof}
By Lemma~\ref{lemma:A6-invariants-in-P3},
there are no $\mathfrak{A}_6$-invariant quartic surfaces in $\mathbb{P}^3$.
This implies that $H^0(\mathcal{O}_U(-K_U))$ does not contain one-dimensional subrepresentations.
Hence it is irreducible by Remark~\ref{remark:pohuj}.
\end{proof}

Lemma~\ref{lemma:base-locus-six-lines-blow-up}
together with~\eqref{eq:A6-h0} implies that
there is an~$\mathfrak{A}_6$-equivariant commutative diagram
\begin{equation}
\label{equation:A6-half-link}
\xymatrix{
&U\ar@{->}[dl]_{\alpha}\ar@{->}[dr]^{\beta}&\\
\mathbb{P}^{3}\ar@{-->}[rr]_{\phi}&&\mathbb{P}^4}
\end{equation}
where $\phi$ is the rational map given by $\mathcal{M}$,
and $\beta$ is a morphism given by the anticanonical linear system $|-K_U|$.
By Lemma~\ref{lemma:anticanonical-linear-system-W5}
the projective space $\mathbb{P}^4$ in~\eqref{equation:A6-half-link} is a projectivization
of an irreducible $\mathfrak{A}_6$-representation.

Recall from Lemma~\ref{lemma:two-twisted-cubics} that $\mathbb{P}^3$ contains exactly two $H_1$-invariant
twisted cubic curves~$\mathscr{C}_1^1$ and~$\mathscr{C}_1^2$.

\begin{lemma}
\label{lemma:six-cubics}
The curve $\mathcal{L}^1$ intersects exactly one curve among $\mathscr{C}_1^1$ and $\mathscr{C}_1^2$.
Moreover, each line among $L_1^1,\ldots,L_6^1$ contains two points of this intersection.
Similarly, the curve~$\mathcal{L}^2$ intersects exactly one curve among $\mathscr{C}_1^1$ and $\mathscr{C}_1^2$,
and this curve is different from the one that intersects $\mathcal{L}^1$.
\end{lemma}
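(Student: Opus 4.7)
The plan is to study $\mathcal{L}^j\cap\mathscr{C}_1^k$ through the fixed locus of a suitable cyclic subgroup $\mumu_5\subset H_1$. I would first fix $\mumu_5\subset H_1\cong\mathfrak{A}_5^{nst}$ whose normaliser $D\cong\mathrm{D}_{10}$ in $H_1$ stabilises some line $L_{i_0}^1$ and, by Corollary~\ref{corollary:A5-reps}(i), also a unique line $L_{i_0'}^2$. By Corollary~\ref{corollary:A5-reps}(iv), the lift $\mumu_{10}\subset 2.H_1$ acts on $\mathbb{U}_4$ with four distinct characters, so $\mumu_5$ has exactly four fixed points on $\mathbb{P}^3$. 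Since $\mathfrak{A}_5$ acts faithfully on each twisted cubic $\mathscr{C}_1^k\cong\mathbb{P}^1$, each cubic contains exactly two of these fixed points; combined with Remark~\ref{remark:cubics-disjoint}, this gives a $2+2$ partition of the four points by cubic. The decomposition of Corollary~\ref{corollary:A5-reps}(i) produces a second $2+2$ partition of the same four points as the unique length-two $D$-orbits on $L_{i_0}^1$ and $L_{i_0'}^2$.

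The heart of the proof is to compare these two partitions via their $\mumu_{10}$-characters. Writing $\zeta$ for a primitive tenth root of unity, $\mathbb{U}_4$ restricted to $\mumu_{10}$ has the four weights $\{\zeta^{\pm 1},\zeta^{\pm 3}\}$, and the $D$-stable decomposition pairs them as $\{\zeta,\zeta^{-1}\}$ and $\{\zeta^3,\zeta^{-3}\}$; these are the weights of the fixed points on the two $D$-invariant lines. Each cubic $\mathscr{C}_1^k$ arises as the image of a $2.\mathfrak{A}_5$-equivariant embedding $\mathbb{P}(\mathbb{U}_2)\hookrightarrow\mathbb{P}(\mathrm{Sym}^3\mathbb{U}_2)\cong\mathbb{P}(\mathbb{U}_4)$, where $\mathbb{U}_2$ is one of the two faithful two-dimensional irreducible representations of $2.\mathfrak{A}_5$ (with $\mumu_{10}$-weights $\{\zeta^{\pm 1}\}$ or $\{\zeta^{\pm 3}\}$); cubing gives $\mumu_{10}$-weights $\{\zeta^{\pm 3}\}$ or $\{\zeta^{\pm 9}\}=\{\zeta^{\mp 1}\}$ at the fixed points on the two cubics. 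Matching weights shows that the two fixed points on $L_{i_0}^1$ coincide, as points of $\mathbb{P}^3$, with the two fixed points on exactly one of $\mathscr{C}_1^1,\mathscr{C}_1^2$, while those on $L_{i_0'}^2$ coincide with those on the other.

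The final step is a B\'ezout and transitivity argument. Since $D$ acts faithfully on $L_{i_0}^1\cong\mathbb{P}^1$ with smallest orbit sizes $2,5,5$, any $D$-invariant subset of $L_{i_0}^1\cap\mathscr{C}_1^k$ has at most $\deg\mathscr{C}_1^k=3$ elements and hence is either empty or the unique length-two $D$-orbit; Remark~\ref{remark:cubics-disjoint} forbids both cubics from sharing this orbit, so $L_{i_0}^1$ meets exactly one cubic in exactly two points. Since $H_1$ acts transitively on $\{L_1^1,\ldots,L_6^1\}$ and preserves each $\mathscr{C}_1^k$, the same dichotomy holds uniformly for every $L_i^1$, yielding $|\mathcal{L}^1\cap\mathscr{C}_1^k|=12$ for exactly one $k$ and $0$ for the other. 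Applying the identical argument to $\mathcal{L}^2$, whose lines carry the opposite $\mumu_{10}$-weight pair, picks out the complementary cubic. I expect the main obstacle to be the character-matching step: carefully aligning the two $H_1$-orbits $\{L_i^1\},\{L_i^2\}$ with the two weight pairs $\{\zeta^{\pm 1}\},\{\zeta^{\pm 3}\}$ and the two faithful two-dimensional $2.\mathfrak{A}_5$-representations must be done in a coordinated way, so that the cubic distinguished by $\mathcal{L}^1$ is genuinely different from the one distinguished by $\mathcal{L}^2$.
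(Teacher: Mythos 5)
Your proof is correct, but it takes a genuinely different route from the paper's. The paper argues globally at the level of $\mathfrak{A}_5$-orbits: by Lemma~\ref{lemma:A5-small-orbits} there are exactly two $H_1$-orbits of length $12$ in $\mathbb{P}^3$; each of $\mathcal{L}^1$ and $\mathcal{L}^2$ contains one of them (the $H_1$-orbit of the length-two orbit of the $\mathrm{D}_{10}$-stabiliser of a line), these two orbits are distinct because $\mathcal{L}^1$ and $\mathcal{L}^2$ are disjoint (Lemma~\ref{lemma:six-lines}), and each cubic $\mathscr{C}_1^k$ contains exactly one of them, the two cubics containing different ones by Remark~\ref{remark:cubics-disjoint}; pigeonhole then forces the matching, and the absence of trisecants of a twisted cubic gives the ``two points per line'' claim. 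You instead localise at the four fixed points of a single $\mumu_5$ and match them by their $\mumu_{10}$-weights, using the realisation of the two invariant cubics as $\mathrm{Sym}^3$-Veronese images of $\mathbb{P}(\mathbb{U}_2)$ and $\mathbb{P}(\mathbb{U}_2^\prime)$. This works, and the alignment issue you flag at the end is harmless for the statement as given: you only need that the partition of the four fixed points by line coincides with the partition by cubic, not which weight pair belongs to which family. In fact the weight computation can be bypassed entirely: since $\mathbb{U}_4$ restricted to $2.\mathrm{D}_{10}$ has no one-dimensional subrepresentations (Corollary~\ref{corollary:A5-reps}(i)), the group $D$ has no fixed points in $\mathbb{P}^3$, so the four $\mumu_5$-fixed points form exactly two $D$-orbits of length two; both partitions (by line and by cubic) are partitions into $D$-orbits, hence they coincide. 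The price of your approach is the identification $\mathrm{Sym}^3(\mathbb{U}_2)\cong\mathbb{U}_4$ and the Veronese description of $\mathscr{C}_1^1$ and $\mathscr{C}_1^2$, which are standard but not established in the paper; what it buys is finer, more explicit information, namely the $\mumu_{10}$-weights of the intersection points lying over each cyclic subgroup of order five.
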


\begin{proof}
By Remark~\ref{remark:A5-subgroups}, the stabilizer in $H_1$ of the curve $L_1^1$ is isomorphic to $\mathrm{D}_{10}$,
and thus it has an orbit of length $2$ on $L_1^1$.
Thus, the curve  $\mathcal{L}^1$ contains an $H_1$-orbit $\Sigma_{12}^1$
of length~$12$ by Lemma~\ref{lemma:A5-small-orbits}.
Similarly, the curve  $\mathcal{L}^2$ contains an $H_1$-orbit $\Sigma_{12}^2$ of length $12$.
By Lemma~\ref{lemma:six-lines}, one has $\Sigma_{12}^1\ne \Sigma_{12}^2$.
Moreover, $\Sigma_{12}^1$ and $\Sigma_{12}^2$ are the only $H_1$-orbits in $\mathbb{P}^3$ of length $12$ by Lemma~\ref{lemma:A5-small-orbits}.
Since $\mathscr{C}_1^1$ and $\mathscr{C}_1^2$ are disjoint by Remark~\ref{remark:cubics-disjoint},
and each of them contains an $H_1$-orbit of length $12$, we see that either $\Sigma_{12}^1\subset\mathscr{C}_1^1$
and $\Sigma_{12}^2\subset\mathscr{C}_1^2$, or $\Sigma_{12}^2\subset\mathscr{C}_1^1$ and $\Sigma_{12}^1\subset\mathscr{C}_1^2$.
Since a line cannot have more than two common points with a twisted cubic,
this also implies the last assertion of the lemma.
\end{proof}

Without loss of generality, we may assume that the curve  $\mathcal{L}^1$ intersects $\mathscr{C}_1^1$,
and the curve $\mathcal{L}^2$ intersects $\mathscr{C}_1^2$.
Let $\mathscr{C}_1^1,\ldots,\mathscr{C}_6^1$ be the $\mathfrak{A}_6$-orbit of the curve $\mathscr{C}_1^1$,
and let $\mathscr{C}_1^2,\ldots,\mathscr{C}_6^2$ be the $\mathfrak{A}_6$-orbit of the curve $\mathscr{C}_1^2$.
By Lemma~\ref{lemma:two-twisted-cubics}, the curves $\mathscr{C}_i^1$ and $\mathscr{C}_i^2$ are the only twisted cubic curves in $\mathbb{P}^3$ that are $H_i$-invariant.
By Lemma~\ref{lemma:six-cubics}, we have

\begin{corollary}
\label{corollary:six-twisted-cubic-lines}
Every twisted cubic curve  $\mathscr{C}_i^1$ intersects each line among $L_1^1,\ldots,L_6^1$ by two points.
Similarly, every twisted cubic curve  $\mathscr{C}_i^2$ intersects each line among $L_1^2,\ldots,L_6^2$ by two points.
\end{corollary}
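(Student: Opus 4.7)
The plan is to deduce the corollary from Lemma~\ref{lemma:six-cubics} by a straightforward $\mathfrak{A}_6$-equivariance argument; I do not foresee any real obstacle, since this is a purely formal consequence of the preceding two lemmas.

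First I would observe that the case $i=1$ of the first assertion is already contained in Lemma~\ref{lemma:six-cubics}. Using the convention fixed immediately after its proof (namely, that $\mathcal{L}^1$ meets $\mathscr{C}_1^1$ and $\mathcal{L}^2$ meets $\mathscr{C}_1^2$), the lemma asserts that the twelve points of $\mathcal{L}^1\cap\mathscr{C}_1^1$ are distributed two to each line $L_j^1$, so that $|L_j^1\cap\mathscr{C}_1^1|=2$ for every $j\in\{1,\ldots,6\}$.

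Next I would propagate this to all $i$ via the $\mathfrak{A}_6$-action. By construction $\{\mathscr{C}_1^1,\ldots,\mathscr{C}_6^1\}$ is a single $\mathfrak{A}_6$-orbit, so one can choose $g\in\mathfrak{A}_6$ with $g(\mathscr{C}_1^1)=\mathscr{C}_i^1$. The curve $\mathcal{L}^1$ is $\mathfrak{A}_6$-invariant, and by Lemma~\ref{lemma:six-lines} its irreducible components are the pairwise disjoint lines $L_1^1,\ldots,L_6^1$; hence $g$ permutes these six lines. For every $j$ there is thus a unique $k$ with $g(L_k^1)=L_j^1$, and the equality
$$
\mathscr{C}_i^1\cap L_j^1=g\big(\mathscr{C}_1^1\cap L_k^1\big)
$$
together with the previous paragraph yields $|\mathscr{C}_i^1\cap L_j^1|=2$.

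Finally, the second assertion concerning $\mathscr{C}_i^2$ and $L_j^2$ follows by the identical argument applied to the $\mathfrak{A}_6$-invariant curve $\mathcal{L}^2$ and the orbit $\{\mathscr{C}_1^2,\ldots,\mathscr{C}_6^2\}$, invoking the ``similarly'' half of Lemma~\ref{lemma:six-cubics} to supply the base case $i=1$. No essential computation is required; the corollary is purely a transport-of-structure statement, which is why it is stated as a corollary rather than a lemma.
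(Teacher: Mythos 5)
Your argument is correct and is exactly the one the paper intends: the paper derives the corollary directly from Lemma~\ref{lemma:six-cubics} together with the labelling convention and the $\mathfrak{A}_6$-equivariance of $\mathcal{L}^1$ and $\mathcal{L}^2$, which is precisely the transport-of-structure step you spell out. No gaps.
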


Denote by $\widetilde{\mathscr{C}}_1^1,\ldots,\widetilde{\mathscr{C}}_6^1$ the proper
transforms on $U$ of the curves $\mathscr{C}_1^1,\ldots,\mathscr{C}_6^1$, respectively.

\begin{lemma}
\label{lemma:six-twisted-curves-contracted}
One has $-K_{U}\cdot\widetilde{\mathscr{C}}_1^1=\ldots=
-K_{U}\cdot\widetilde{\mathscr{C}}_6^1=0$.
\end{lemma}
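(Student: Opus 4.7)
The plan is to compute $-K_U \cdot \widetilde{\mathscr{C}}_1^1$ directly from the expression
$$
-K_U \sim 4\alpha^*\Pi - \sum_{i=1}^{6} E_i,
$$
where $\Pi$ is a plane in $\mathbb{P}^{3}$, and then invoke $\mathfrak{A}_6$-equivariance. Since by construction the curves $\mathscr{C}_1^1,\ldots,\mathscr{C}_6^1$ form a single $\mathfrak{A}_6$-orbit, the same equality will hold for $\widetilde{\mathscr{C}}_2^1,\ldots,\widetilde{\mathscr{C}}_6^1$ once it is established for $\widetilde{\mathscr{C}}_1^1$.

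For the first contribution, I would observe that the twisted cubic $\mathscr{C}_1^1$ is not contained in the union of the blow-up centers $L_1^1\cup\ldots\cup L_6^1$ (it is an irreducible curve of degree three, not a line, and by Lemma~\ref{lemma:two-twisted-cubics} it is one of only two $H_1$-invariant twisted cubics). Hence $\alpha_*\widetilde{\mathscr{C}}_1^1 = \mathscr{C}_1^1$ and
$$
\alpha^*\Pi \cdot \widetilde{\mathscr{C}}_1^1 \;=\; \Pi \cdot \mathscr{C}_1^1 \;=\; \deg(\mathscr{C}_1^1) \;=\; 3.
$$

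For the second contribution, I would compute $E_j \cdot \widetilde{\mathscr{C}}_1^1$ for each $j$ in terms of the set-theoretic intersection $\mathscr{C}_1^1 \cap L_j^1$. By Corollary~\ref{corollary:six-twisted-cubic-lines}, this intersection consists of two distinct points for every $j$. A smooth twisted cubic and a line in $\mathbb{P}^3$ meet at a point with local intersection multiplicity equal to one unless the line is tangent to the cubic at that point, in which case the scheme-theoretic intersection at that point would have length at least two and manifest as a single geometric point; thus the existence of two distinct intersection points forces the intersection to be transversal. Consequently $E_j \cdot \widetilde{\mathscr{C}}_1^1 = 2$ for each $j$, and therefore
$$
-K_U \cdot \widetilde{\mathscr{C}}_1^1 \;=\; 4\cdot 3 - \sum_{j=1}^{6} 2 \;=\; 12 - 12 \;=\; 0.
$$

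The only step that requires actual justification rather than bookkeeping is the transversality assertion used to conclude $E_j \cdot \widetilde{\mathscr{C}}_1^1 = 2$; everything else is a one-line numerical consequence of the intersection formula on the blow-up together with the count in Corollary~\ref{corollary:six-twisted-cubic-lines}. The $\mathfrak{A}_6$-symmetry then propagates the vanishing to all six curves $\widetilde{\mathscr{C}}_i^1$.
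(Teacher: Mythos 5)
Your computation is exactly the paper's intended argument: the proof there is a one-line citation of Corollary~\ref{corollary:six-twisted-cubic-lines}, with the arithmetic $4\cdot 3-6\cdot 2=0$ left implicit. One remark on your transversality step: two distinct intersection points do not by themselves rule out tangency at one of them, but since a twisted cubic is a scheme-theoretic intersection of quadrics, a line meets it in a scheme of length at most two, so the presence of two distinct points forces both local multiplicities to equal one (equivalently, $E_j\cdot\widetilde{\mathscr{C}}_1^1$ equals the length of the scheme $\mathscr{C}_1^1\cap L_j^1$, which is then exactly $2$).
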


\begin{proof}
This follows from Corollary~\ref{corollary:six-twisted-cubic-lines}.
\end{proof}

We see that each curve $\widetilde{\mathscr{C}}_i^1$ is contracted by $\beta$ to a point.
Since the $\mathfrak{A}_6$-orbit of $\widetilde{\mathscr{C}}_1^1$ consists of six
curves, we also obtain the following.

\begin{corollary}
\label{corollary:X-7-10-Sigma-6}
The image of the morphism $\beta$ contains an $\mathfrak{A}_6$-orbit of length at most six.
\end{corollary}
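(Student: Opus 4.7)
The plan is essentially to read off the corollary from Lemma~\ref{lemma:six-twisted-curves-contracted} together with the $\mathfrak{A}_6$-equivariance of the morphism $\beta$. Since $\beta$ is the morphism defined by the base-point-free linear system $|-K_U|$ (Lemma~\ref{lemma:base-locus-six-lines-blow-up}), any irreducible curve on $U$ whose intersection with $-K_U$ is zero must be contracted by $\beta$ to a single point. I would first observe that for each $i$ the proper transform $\widetilde{\mathscr{C}}_i^1$ is a genuine irreducible curve on $U$: the twisted cubic $\mathscr{C}_i^1$ is not a line, so it is not an irreducible component of the blow-up centre $\mathcal{L}^1 = L_1^1 + \ldots + L_6^1$, and $\alpha$ is an isomorphism away from $\mathcal{L}^1$.

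Having this, Lemma~\ref{lemma:six-twisted-curves-contracted} gives $-K_U \cdot \widetilde{\mathscr{C}}_i^1 = 0$ for every $i$, so each $\widetilde{\mathscr{C}}_i^1$ is mapped by $\beta$ to a single point $P_i \in \mathbb{P}^4$. The commutative diagram~\eqref{equation:A6-half-link} is $\mathfrak{A}_6$-equivariant, and by construction $\mathfrak{A}_6$ permutes the six curves $\mathscr{C}_1^1, \ldots, \mathscr{C}_6^1$ transitively; this permutation action lifts to the proper transforms and hence descends to the image points. Therefore the set $\{P_1, \ldots, P_6\}$ is a non-empty, $\mathfrak{A}_6$-invariant finite subset of $\mathbb{P}^4$ of cardinality at most six.

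Any non-empty finite $\mathfrak{A}_6$-invariant subset of $\mathbb{P}^4$ is a disjoint union of $\mathfrak{A}_6$-orbits, and since the whole set has cardinality at most six, each of these orbits has length at most six. Picking any one of them yields the desired $\mathfrak{A}_6$-orbit contained in the image of $\beta$, which proves the corollary.

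There is no real obstacle: the argument reduces to combining the numerical input of Lemma~\ref{lemma:six-twisted-curves-contracted} with the fact that a morphism defined by a base-point-free linear system contracts precisely the curves on which the defining divisor has degree zero. The only point that might deserve a half-sentence of comment in the final write-up is the verification that the six proper transforms $\widetilde{\mathscr{C}}_i^1$ are honest curves, but as noted this is immediate.
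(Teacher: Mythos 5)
Your argument is correct and is essentially the paper's own proof: both deduce from Lemma~\ref{lemma:six-twisted-curves-contracted} that the six curves $\widetilde{\mathscr{C}}_1^1,\ldots,\widetilde{\mathscr{C}}_6^1$, forming a single $\mathfrak{A}_6$-orbit, are contracted by the anticanonical morphism $\beta$ to an $\mathfrak{A}_6$-invariant set of at most six points. The extra remarks you include (that the proper transforms are honest curves, and that the invariant set decomposes into orbits of length at most six) are just slightly more explicit versions of steps the paper leaves implicit.
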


Since $-K_U^3=4$, the image of $\beta$ is either an $\mathfrak{A}_6$-invariant quartic threefold or an $\mathfrak{A}_6$-invariant quadric threefold.
Using results of \cite{Vazzana}, one can show that the latter case is impossible.
However, this immediately follows from Corollary~\ref{corollary:X-7-10-Sigma-6}.
Indeed, an $\mathfrak{A}_6$-orbit of length at most six cannot be contained in the $\mathfrak{A}_6$-invariant quadric by Corollary~\ref{corollary:Q-orbits} and Lemma~\ref{lemma:A6-orbits-in-P4}.

\begin{corollary}
\label{corollary:X-7-10-Q}
The morphism $\beta$ is birational onto its image,
and its image is a quartic threefold.
\end{corollary}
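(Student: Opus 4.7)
The plan is to use the intersection-number bookkeeping on the big and nef divisor $-K_U$, combined with the orbit obstruction from Corollary~\ref{corollary:X-7-10-Sigma-6}, to rule out all possibilities except the birational one.

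First I would set up the basic dimension and degree count. Since $-K_U$ is nef by Lemma~\ref{lemma:base-locus-six-lines-blow-up} and satisfies $(-K_U)^3=4>0$, it is big, so $\beta$ is generically finite onto its image; in particular $Y:=\beta(U)$ is a threefold. Since the linear system $|-K_U|$ is base-point free and $H^0(\mathcal{O}_U(-K_U))$ is an irreducible $\mathfrak{A}_6$-representation by Lemma~\ref{lemma:anticanonical-linear-system-W5}, the image $Y$ is not contained in any hyperplane (otherwise we would obtain a non-trivial invariant subspace). Thus $Y\subset\mathbb{P}^4$ is a non-degenerate irreducible threefold, hence a hypersurface of degree $\deg Y\geqslant 2$. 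The projection formula gives the fundamental equality
\begin{equation*}
\deg(\beta)\cdot\deg(Y)=(-K_U)^3=4.
\end{equation*}

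Next I would narrow down the possibilities. Combining $\deg Y\geqslant 2$ with the equality above leaves only two cases: either $\deg(\beta)=1$ and $\deg Y=4$, which is exactly what we want; or $\deg(\beta)=2$ and $\deg Y=2$. So the entire problem reduces to excluding the case that $Y$ is a quadric. Since $\beta$ is $\mathfrak{A}_6$-equivariant and $\mathbb{P}^4=\mathbb{P}(\mathbb{W}_5)$ carries an irreducible $\mathfrak{A}_6$-action by Lemma~\ref{lemma:anticanonical-linear-system-W5} and Remark~\ref{remark:pohuj}, the threefold $Y$ is $\mathfrak{A}_6$-invariant; if it were a quadric, Lemma~\ref{lemma:A6-S5-quartics} would force $Y=Q$.

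Finally, I would invoke the orbit obstruction. By Corollary~\ref{corollary:X-7-10-Sigma-6}, the image $Y$ contains an $\mathfrak{A}_6$-orbit of length at most six. By Lemma~\ref{lemma:A6-orbits-in-P4}, the only such orbit in $\mathbb{P}^4$ is $\Sigma_6$, so $\Sigma_6\subset Y$. But Corollary~\ref{corollary:Q-orbits} says that $\Sigma_6\not\subset Q$, contradicting $Y=Q$. The quadric case is therefore ruled out, leaving $\deg(\beta)=1$ and $\deg Y=4$, which is exactly the assertion.

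The routine-looking but genuinely essential step is step one — making sure $\beta$ is generically finite and the image genuinely three-dimensional spanning $\mathbb{P}^4$; everything else is just a clean numerical/orbit argument. There is no real obstacle beyond this, since all the non-trivial geometric input (irreducibility of the anticanonical representation, the list of small $\mathfrak{A}_6$-orbits in $\mathbb{P}^4$, the fact that $\Sigma_6$ avoids $Q$, and the presence of a short orbit in the image) has already been established.
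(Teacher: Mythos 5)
Your proposal is correct and follows essentially the same route as the paper: the degree count $\deg(\beta)\cdot\deg(Y)=(-K_U)^3=4$ reduces everything to excluding the quadric case, which is then ruled out exactly as you do, via Corollary~\ref{corollary:X-7-10-Sigma-6}, Lemma~\ref{lemma:A6-orbits-in-P4}, and Corollary~\ref{corollary:Q-orbits}. The only difference is that you spell out the non-degeneracy and generic finiteness steps that the paper leaves implicit.
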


Now Lemma~\ref{lemma:X-7-10} implies that the image of $\beta$ is the quartic $X_{\frac{7}{10}}$.
This proves

\begin{corollary}
\label{corollary:X-7-10-rational}
The threefold $X_{\frac{7}{10}}$ is rational.
\end{corollary}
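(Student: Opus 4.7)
The plan is essentially to assemble the results that have already been proved in Section~\ref{section:7-10} and draw the rationality conclusion. The main work has been done in constructing the diagram~\eqref{equation:A6-half-link}, so what remains is to observe that this diagram realises $X_{\frac{7}{10}}$ as birational to~$\mathbb{P}^{3}$.

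First, I would recall from Corollary~\ref{corollary:X-7-10-Q} that the morphism $\beta\colon U\to\mathbb{P}^{4}$ defined by the anticanonical linear system $|-K_{U}|$ is birational onto its image, and that this image is a quartic threefold in $\mathbb{P}^{4}$. Since the $\mathfrak{A}_{6}$-action on $U$ descends equivariantly through $\beta$ (because $|-K_{U}|=\mathcal{M}_{U}$ is an $\mathfrak{A}_{6}$-invariant linear system, cf. Lemma~\ref{lemma:anticanonical-linear-system-W5}), the image $\beta(U)$ is an $\mathfrak{A}_{6}$-invariant quartic threefold in~$\mathbb{P}^{4}=\mathbb{P}(H^{0}(\mathcal{O}_{U}(-K_{U}))^{\vee})$.

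Second, I would identify this quartic. By Corollary~\ref{corollary:X-7-10-Sigma-6}, the image $\beta(U)$ contains an $\mathfrak{A}_{6}$-orbit of length at most six (coming from the contracted twisted cubics $\widetilde{\mathscr{C}}_{i}^{1}$). Lemma~\ref{lemma:X-7-10} then applies verbatim and forces $\beta(U)=X_{\frac{7}{10}}$.

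Finally, since $\alpha\colon U\to\mathbb{P}^{3}$ is a birational morphism (it is a blow-up along the lines $L_{1}^{1},\ldots,L_{6}^{1}$), the composition $\phi=\beta\circ\alpha^{-1}\colon\mathbb{P}^{3}\dasharrow X_{\frac{7}{10}}$ is an $\mathfrak{A}_{6}$-equivariant birational map. Therefore $X_{\frac{7}{10}}$ is birational to $\mathbb{P}^{3}$, and in particular rational. There is no real obstacle left at this stage: all the delicate content, namely the freeness of $\mathcal{M}_{U}$ (Lemma~\ref{lemma:base-locus-six-lines-blow-up}), the irreducibility of the target representation (Lemma~\ref{lemma:anticanonical-linear-system-W5}), and the identification of the quartic via its small orbit (Lemma~\ref{lemma:X-7-10}), has already been arranged; the final statement is just a one-line consequence.
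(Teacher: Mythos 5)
Your proposal is correct and follows exactly the paper's route: Corollary~\ref{corollary:X-7-10-Q} gives that $\beta$ is birational onto a quartic threefold, Corollary~\ref{corollary:X-7-10-Sigma-6} together with Lemma~\ref{lemma:X-7-10} identifies that quartic as $X_{\frac{7}{10}}$, and composing with the blow-up $\alpha$ yields the birational map $\mathbb{P}^3\dasharrow X_{\frac{7}{10}}$. Nothing is missing.
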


Let us conclude this section by recalling two related results proved in \cite[\S4]{ChSh09b}.
The commutative diagram~\eqref{equation:A6-half-link} can be extended to
an $\mathfrak{A}_6$-equivariant commutative diagram
\begin{equation}
\label{equation:A6-link}
\xymatrix{
&U\ar@{-->}[rrrr]^{\rho}\ar@{->}[ddl]_{\alpha}\ar@{->}[dr]^{\gamma}&&&& U\ar@{->}[ddr]^{\alpha}\ar@{->}[dl]_{\gamma}&\\
&&X_{\frac{7}{10}}\ar@{->}[rr]^{\sigma}&&X_{\frac{7}{10}}&&\\
\mathbb{P}^{3}\ar@{-->}[rrrrrr]_{\psi}\ar@{-->}[rru]_{\phi}&&&&&&\mathbb{P}^{3}\ar@{-->}[ull]^{\phi}}
\end{equation}
Here $\sigma$ is an automorphism of the quartic threefold $X_{\frac{7}{10}}$ given by an odd permutation in $\mathfrak{S}_6$
acting on~$\mathbb{P}^4$, cf. Remark~\ref{remark:pohuj}.
The birational map $\rho$ is a composition of Atiyah flops in $36$ curves contracted by $\gamma$,
and the birational map $\psi$ is not regular.

The diagram~\eqref{equation:A6-link} is a so-called $\mathfrak{A}_6$-Sarkisov link.
The subgroup $\mathfrak{A}_6\subset\mathrm{Aut}(\mathbb{P}^3)$ together with $\psi\in\mathrm{Bir}^{\mathfrak{A}_6}(\mathbb{P}^3)$ generates a subgroup isomorphic to $\mathfrak{S}_6$.
Moreover, the subgroup
$$\mathrm{Aut}^{\mathfrak{A}_6}(\mathbb{P}^3)\subset\mathrm{Bir}^{\mathfrak{A}_6}(\mathbb{P}^3)$$
is also isomorphic to $\mathfrak{S}_6$.
By \cite[Theorem~1.24]{ChSh09b}, the whole group $\mathrm{Bir}^{\mathfrak{A}_6}(\mathbb{P}^3)$ is a free product of these two copies of $\mathfrak{S}_6$
amalgamated over the original $\mathfrak{A}_6$.

\section{Rationality of the quartic threefold $X_{\frac{1}{6}}$}
\label{section:1-6}

In this section we will construct an explicit
$\mathfrak{S}_5$-equivariant birational map $\mathbb{P}^3\dasharrow X_{\frac{1}{6}}$.
We identify $\mathbb{P}^3$ with the projectivization~\mbox{$\mathbb{P}(\mathbb{U}_4)$},
where $\mathbb{U}_4$ is the restriction of the four-dimensional irreducible
representation of the group~\mbox{$2.\mathfrak{S}_6$}
introduced in Section~\ref{section:representations} to a
subgroup~$2.\mathfrak{S}_5^{nst}$, and denote the latter
subgroup simply by $2.\mathfrak{S}_5$. By Corollary~\ref{corollary:characters}(i),
the $2.\mathfrak{S}_5$-representation $\mathbb{U}_4$ is irreducible.

\begin{lemma}
\label{lemma:P3-S5-orbits}
Let $\Omega$ be an $\mathfrak{S}_5$-orbit in $\mathbb{P}^3$. Then $|\Omega|\geqslant 12$.
\end{lemma}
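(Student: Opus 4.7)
The plan is to imitate the proof of Lemma~\ref{lemma:P3-A6-orbits}, replacing the irreducibility observations used there for $2.\mathfrak{A}_6$ by the corresponding ones for subgroups of $2.\mathfrak{S}_5^{nst}$. First I would apply Remark~\ref{remark:S5-subgroups} to the stabilizer of a point in $\Omega$ in order to conclude that if $|\Omega|<12$, then $|\Omega|\in\{1,2,5,6,10\}$ and the stabilizer is one of $\mathfrak{S}_5$, $\mathfrak{A}_5$, $\mathfrak{S}_4$, $F_{20}$, $\mathfrak{A}_4$, or $\mathrm{D}_{12}$ (the last two both producing index $10$).

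Next I would translate each possibility into a representation-theoretic assertion: a point of $\mathbb{P}^3=\mathbb{P}(\mathbb{U}_4)$ fixed by a subgroup $\Gamma\subset\mathfrak{S}_5$ corresponds to a one-dimensional subrepresentation of the restriction of $\mathbb{U}_4$ to the preimage of $\Gamma$ in $2.\mathfrak{S}_5^{nst}\subset 2.\mathfrak{S}_6$. The case $|\Omega|=1$ is ruled out by Corollary~\ref{corollary:characters}(i), which tells us that $\mathbb{U}_4$ is already an irreducible $2.\mathfrak{S}_5^{nst}$-representation. The cases $|\Omega|\in\{2,5,6\}$ are ruled out by the same Corollary~\ref{corollary:characters}(i), which asserts that $\mathbb{U}_4$ remains irreducible after restriction to any of the subgroups $2.\mathfrak{A}_5^{nst}$, $2.\mathfrak{S}_4^{nst}$, or $2.F_{20}$; in particular, no such restriction admits a one-dimensional subrepresentation, so no point of $\mathbb{P}^3$ is fixed by a subgroup of $\mathfrak{S}_5$ of these isomorphism types. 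Finally, if $|\Omega|=10$ the stabilizer is either $\mathfrak{A}_4^{nst}$ or $\mathrm{D}_{12}^{nst}$, and Corollary~\ref{corollary:characters}(ii) shows that in both cases $\mathbb{U}_4$ restricts to a sum of two non-isomorphic irreducible two-dimensional representations, which again contains no one-dimensional subrepresentation.

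Combining the five exclusions, no orbit of length below $12$ can exist, and the lemma follows. There is no genuine obstacle: the content is simply a book-keeping exercise matching each admissible small-index subgroup provided by Remark~\ref{remark:S5-subgroups} to the corresponding line of Corollary~\ref{corollary:characters} (equivalently, of Table~\ref{table:characters}). The only mildly delicate point is to remember that index $10$ admits two non-conjugate subgroup types, so both $\mathfrak{A}_4^{nst}$ and $\mathrm{D}_{12}^{nst}$ must be treated, and that the relevant $\mathrm{D}_{12}$ really is the non-standard one, since by convention any $\mathrm{D}_{12}\subset\mathfrak{S}_5^{nst}$ is called $\mathrm{D}_{12}^{nst}$.
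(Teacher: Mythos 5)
Your proposal is correct and is precisely the intended argument: the paper's own proof is the one-line instruction ``Apply Remark~\ref{remark:S5-subgroups} together with Corollary~\ref{corollary:characters}'', and your case-by-case matching of the possible stabilizers to parts (i) and (ii) of that corollary is exactly the book-keeping this instruction compresses. The one point you flag as delicate (both $\mathfrak{A}_4^{nst}$ and $\mathrm{D}_{12}^{nst}$ occurring at index $10$, and the convention that any such subgroup of $\mathfrak{S}_5^{nst}$ is non-standard) is handled correctly.
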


\begin{proof}
Apply Remark~\ref{remark:S5-subgroups} together with
Corollary~\ref{corollary:characters}.
\end{proof}

\begin{lemma}
\label{lemma:S5-invariant-curves-small-degree}
Let $C$ be an $\mathfrak{S}_5$-invariant curve in $\mathbb{P}^3$ of degree $d$. Then $d\geqslant 6$.
\end{lemma}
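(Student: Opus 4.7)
The plan is to suppose for contradiction that $C$ is an $\mathfrak{S}_5$-invariant curve in $\mathbb{P}^3\cong\mathbb{P}(\mathbb{U}_4)$ of degree $d\leqslant 5$, and to eliminate first the reducible and then the irreducible cases.

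First I would analyse a reducible $C$ via the $\mathfrak{S}_5$-action on its set of irreducible components. If one $\mathfrak{S}_5$-orbit consists of $n$ components of common degree $k$, then $nk\leqslant 5$ and the stabilizer of one component has index $n$ in $\mathfrak{S}_5$, so by Remark~\ref{remark:S5-subgroups} only $n\in\{1,2,5\}$ is possible. The case $n=5$, $k=1$ requires a line fixed by a subgroup $\mathfrak{S}_4^{nst}$, and the case $n=2$, $k\in\{1,2\}$ requires a line or a conic fixed by a subgroup $\mathfrak{A}_5^{nst}$; both would give a proper invariant subspace of $\mathbb{U}_4$ under one of the restrictions shown irreducible by Corollary~\ref{corollary:characters}(i), which is impossible. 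A degenerate $\mathfrak{A}_5^{nst}$-invariant conic consisting of two lines is also excluded: the two lines would either have to be individually $\mathfrak{A}_5^{nst}$-invariant or swapped by $\mathfrak{A}_5^{nst}$, the latter being forbidden by the simplicity of $\mathfrak{A}_5$. Any $\mathfrak{S}_5$-invariant line or planar curve is ruled out by the same irreducibility of $\mathbb{U}_4$. This reduces the problem to the case of an irreducible $\mathfrak{S}_5$-invariant curve $C$ of degree $d\in\{3,4,5\}$.

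Next I would verify that such a $C$ is smooth, non-planar, and carries a faithful $\mathfrak{S}_5$-action. Applying Lemma~\ref{lemma:A5-invariant-curves-small-degree} to $C$ as an $\mathfrak{A}_5^{nst}$-invariant curve yields the bound $g\leqslant d^2/8+1-|\mathrm{Sing}(C)|\leqslant 4-|\mathrm{Sing}(C)|$ for the genus of its normalisation; since $\mathrm{Sing}(C)$ is an $\mathfrak{A}_5^{nst}$-invariant subset of $\mathbb{P}^3$ and any non-empty such subset has size at least $10$ by Lemma~\ref{lemma:A5-small-orbits}, this forces $\mathrm{Sing}(C)=\emptyset$. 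Non-planarity is immediate from the absence of an $\mathfrak{S}_5$-invariant plane in $\mathbb{P}^3$. Faithfulness follows because the kernel of the $\mathfrak{S}_5$-action on $C$ is a normal subgroup of $\mathfrak{S}_5$, hence either trivial or containing $\mathfrak{A}_5$; in the latter case every point of $C$ would be $\mathfrak{A}_5^{nst}$-fixed, again contradicting Lemma~\ref{lemma:A5-small-orbits}.

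To finish, Castelnuovo's bound for smooth non-degenerate irreducible curves of degree $d\leqslant 5$ in $\mathbb{P}^3$ gives $g\leqslant 2$, and each value of $g$ is then excluded by a classical obstruction to a faithful $\mathfrak{S}_5$-action: in genus $0$ one uses that $\mathfrak{S}_5$ does not embed in $\mathrm{PGL}_2(\mathbb{C})$; in genus $1$ that every finite subgroup of the automorphism group of an elliptic curve is solvable while $\mathfrak{S}_5$ is not; and in genus $2$ the Hurwitz bound $|\mathrm{Aut}(C)|\leqslant 84(g-1)=84<120=|\mathfrak{S}_5|$. I expect the main obstacle to lie in the systematic bookkeeping in the reducible step, where every $\mathfrak{S}_5$-equivariant decomposition of a degree $\leqslant 5$ curve has to be individually dispatched using the restriction data collected in Section~\ref{section:representations}; once we have reduced to an irreducible $C$, the subsequent combination of Lemma~\ref{lemma:A5-invariant-curves-small-degree}, Castelnuovo's inequality, and classical automorphism bounds is straightforward.
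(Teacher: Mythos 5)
Your proposal is correct, and its overall architecture --- splitting into reducible and irreducible cases, bounding the number of components via Remark~\ref{remark:S5-subgroups} and the irreducibility statements of Corollary~\ref{corollary:characters}, and deducing smoothness of an irreducible $C$ from Lemma~\ref{lemma:A5-invariant-curves-small-degree} together with the absence of small orbits --- coincides with the paper's. The genuine difference is the endgame for an irreducible smooth $C$. The paper only extracts $g\leqslant 4$ from Lemma~\ref{lemma:A5-invariant-curves-small-degree}, rules out $g=0$ by non-faithfulness of $\mathfrak{S}_5$ on $\mathbb{P}^1$, reduces to $g\in\{4,5\}$ via the classification of genera admitting a faithful $\mathfrak{A}_5$-action, and then needs the two dedicated Lemmas~\ref{lemma:S5-on-g-4} and~\ref{lemma:S5-on-g-5}. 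You instead apply Castelnuovo's bound for smooth non-degenerate space curves of degree $d\leqslant 5$, which gives the sharper estimate $g\leqslant 2$, and then dispose of $g=0,1,2$ by elementary obstructions: $\mathfrak{S}_5$ does not embed into $\mathrm{PGL}_2(\mathbb{C})$, every finite subgroup of the automorphism group of a genus-one curve is solvable, and in genus two the Hurwitz bound gives $84<120$. This bypasses Lemmas~\ref{lemma:S5-on-g-4} and~\ref{lemma:S5-on-g-5} and the external classification of $\mathfrak{A}_5$-curves entirely, at the modest cost of importing Castelnuovo's inequality; note that non-degeneracy of $C$, which you correctly record, is needed for that inequality. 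Two minor remarks: in the two-component case the paper cites Lemma~\ref{lemma:two-twisted-cubics} (no $\mathfrak{A}_5$-invariant curve of degree at most two exists) rather than arguing directly from irreducibility of $\mathbb{U}_4$, but this is the same computation; and your concern about a degenerate invariant conic is moot, since an irreducible component of degree two is automatically an irreducible conic. Your explicit verification that $\mathfrak{S}_5$ acts faithfully on $C$, via normality of the kernel and Lemma~\ref{lemma:A5-small-orbits}, fills in a step the paper leaves implicit.
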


\begin{proof}
Suppose that $d\leqslant 5$.
To start with, assume that $C$ is reducible and denote by $r$ the number of its irreducible components.
We may assume that  $\mathfrak{S}_5$ permutes the irreducible components of $C$ transitively.
Thus, either $r=2$ or $r=5$ by Remark~\ref{remark:S5-subgroups}.
If $r=5$, the irreducible components of $C$ are lines,
so that this case is impossible by Remark~\ref{remark:S5-subgroups} and Corollary~\ref{corollary:characters}(i).
Hence, we have $r=2$, and the stabilizer of each of the two irreducible components $C_1$ and $C_2$ of $C$ is the subgroup $\mathfrak{A}_5\subset\mathfrak{S}_5$.
Moreover, in this case one has
$$
\mathrm{deg}(C_1)=\mathrm{deg}(C_2)\leqslant 2,
$$
which is impossible by Lemma~\ref{lemma:two-twisted-cubics}.

Therefore, we assume that the curve $C$ is irreducible.
Let $g$ be the genus of the normalization of the curve $C$.
Then
$$
g\leqslant\frac{d^2}{8}+1-|\mathrm{Sing}(C)|
$$
by Lemma~\ref{lemma:A5-invariant-curves-small-degree}, so that
$g\leqslant 4-|\mathrm{Sing}(C)|$.
This implies that $C$ is smooth, because there are no $\mathfrak{S}_5$-orbits of length less than $12$ by Lemma~\ref{lemma:P3-S5-orbits}.

Since $\mathfrak{S}_5$ does not act faithfully on $\mathbb{P}^1$, we see that $g\ne 0$.
Thus, either $g=4$ or $g=5$ by \cite[Lemma~5.1.5]{CheltsovShramov}.
The former case is impossible by Lemma~\ref{lemma:S5-on-g-4},
while the latter case is impossible by Lemma~\ref{lemma:S5-on-g-5}.
\end{proof}

Recall from Section~\ref{section:icosahedron} that
the subgroup $\mathfrak{A}_4\subset\mathfrak{A}_5\subset\mathfrak{S}_5$ fixes
two disjoint lines $L_1$ and $L_1^\prime$.
As before, we consider the $\mathfrak{A}_5$-orbit $L_1,\ldots,L_5$
of the line $L_1$ and the $\mathfrak{A}_5$-orbit $L_1^\prime,\ldots,L_5^\prime$
of the line $L_1^\prime$.
By Lemma~\ref{lemma:double-five} the lines
$L_1,\ldots,L_5,L_1^\prime,\ldots,L_5^\prime$
form a double five configuration
(see Definition~\ref{definition:double-five}).
Corollary~\ref{corollary:characters}(i) implies that
the $\mathfrak{S}_5$-orbit of the line $L_1$
is~\mbox{$L_1,\ldots,L_5,L_1^\prime,\ldots,L_5^\prime$}.

\begin{remark}\label{remark:F20-transitive}
Any subgroup $F_{20}\subset\mathfrak{S}_5$ permutes the ten
lines~\mbox{$L_1,\ldots,L_5,L_1^\prime,\ldots,L_5^\prime$}
transitively. Indeed, let $c\in F_{20}$ be an element of order
five. Then $c$ is not contained in a stabilizer of the line $L_1$,
so that the orbit of $L_1$ with respect to the group
$\Gamma\cong\mumu_5$ generated by $c$ is $L_1,\ldots,L_5$.
Similarly, the $\Gamma$-orbit of the line
$L_1^\prime$ is $L_1^\prime,\ldots,L_5^\prime$.
Also, the group $F_{20}$ is not contained in $\mathfrak{A}_5$, so that
the $F_{20}$-orbit of $L_1$ contains some of the lines
$L_1^\prime,\ldots,L_5^\prime$, and thus contains all the ten
lines~\mbox{$L_1,\ldots,L_5,L_1^\prime,\ldots,L_5^\prime$}.
\end{remark}

Let $\mathcal{M}$ be the linear system on $\mathbb{P}^3$
consisting of all quartic surfaces passing through all lines $L_1,\ldots,L_5$ and $L_1^\prime,\ldots,L_5^\prime$.
Then $\mathcal{M}$ is not empty.
In fact, Lemma~\ref{lemma:double-five} and parameter count imply that its dimension is at least four.
Moreover, the linear system $\mathcal{M}$ does not have base components by Lemma~\ref{lemma:P3-S5-invariants}.

\begin{lemma}
\label{lemma:base-locus-ten-lines}
The base locus of $\mathcal{M}$ does not contain curves that are different from the lines $L_1,\ldots,L_5,L_1^\prime,\ldots,L_5^\prime$.
Moreover, a general surface in $\mathcal{M}$ is smooth in a general point of each of these lines.
Furthermore, two general surfaces in $\mathcal{M}$ intersect transversally at a general point of each of these lines.
\end{lemma}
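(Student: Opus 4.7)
The plan is to follow the strategy of Lemma~\ref{lemma:base-locus-six-lines} nearly verbatim, adapted to the ten lines of the double-five configuration. Let $Z$ denote the $\mathfrak{S}_5$-invariant union of those irreducible components of the base locus of $\mathcal{M}$ that are different from each $L_i$ and $L_i^\prime$, and set $d=\deg Z$. For two general $M_1,M_2\in\mathcal{M}$ we may write
$$
M_1\cdot M_2=Z+m\mathcal{L}+\Delta,
$$
where $\mathcal{L}=L_1+\ldots+L_5+L_1^\prime+\ldots+L_5^\prime$, $m\geqslant 1$ is an integer, and $\Delta$ is an effective $1$-cycle whose support contains none of the ten lines. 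Intersecting with a general plane gives $16\geqslant d+10m$, forcing $m=1$ and $d\leqslant 6$.

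Once $m=1$, the smoothness and transversality claims are immediate: the general inequality $m\geqslant\mathrm{mult}_{L_i}(M_1)\cdot\mathrm{mult}_{L_i}(M_2)$ together with $\mathrm{mult}_{L_i}(M_j)\geqslant 1$ forces both multiplicities to equal~$1$, and this in turn yields transversal intersection at a general point of each line. Thus only the first claim, namely $d=0$, remains substantive. By Lemma~\ref{lemma:S5-invariant-curves-small-degree} one has $d\in\{0,6\}$, and to exclude $d=6$ I would enumerate the possible $\mathfrak{S}_5$-invariant curves of degree~$6$. A genus-by-genus analysis rules out smooth irreducible cases, using the Hurwitz bound and the absence of faithful projective representations of $\mathfrak{S}_5$ in small dimension to kill genera~$0$, $1$, $2$, $3$, and Lemmas~\ref{lemma:S5-on-g-4}--\ref{lemma:S5-on-g-5} to kill genera $4$ and $5$; Corollary~\ref{corollary:characters} disposes of reducible configurations built from invariant lines or plane components. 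The only surviving candidate is the disjoint union $\mathscr{C}^1\cup\mathscr{C}^2$ of the two $\mathfrak{A}_5$-invariant twisted cubics of Lemma~\ref{lemma:two-twisted-cubics}, which $\mathfrak{S}_5$ must interchange (otherwise one of them would be $\mathfrak{S}_5$-invariant of degree~$3$, contradicting Lemma~\ref{lemma:S5-invariant-curves-small-degree}).

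The hard part will be excluding $\mathscr{C}^1\cup\mathscr{C}^2$ from the base locus. A short orbit-length argument first shows that each $\mathscr{C}^i$ is disjoint from~$\mathcal{L}$: the stabilizer $\mathfrak{A}_4\subset\mathfrak{A}_5$ of the line $L_1$ has no fixed point in $\mathbb{P}^3$ by Corollary~\ref{corollary:characters}(ii), so it acts on $\mathscr{C}^i\cong\mathbb{P}^1$ faithfully with smallest orbit of length~$4$, whereas a line meets a twisted cubic in at most $2$ points. The remaining task is to exhibit a single quartic in $\mathcal{M}$ not containing~$\mathscr{C}^1$; I would pursue this either by an explicit construction (assembling such a quartic from invariants of subgroups of $\mathfrak{S}_5$ supplied by Corollary~\ref{corollary:characters}), or by a dimension estimate, showing that the codimension in $H^0(\mathcal{O}_{\mathbb{P}^3}(4))$ imposed by vanishing on $\mathcal{L}\cup\mathscr{C}^1\cup\mathscr{C}^2$ strictly exceeds that imposed by $\mathcal{L}$ alone, contradicting $\dim\mathcal{M}\geqslant 4$.
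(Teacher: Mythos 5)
There is a genuine gap, and it stems from a single missed observation. The paper's proof of this lemma is identical to yours up to the intersection count, except that it notes $\Delta\neq 0$: since $\dim\mathcal{M}\geqslant 4$, the system $\mathcal{M}$ is not a pencil, so the intersection $M_1\cdot M_2$ of two general members cannot be entirely supported on the base locus (otherwise all members of $\mathcal{M}$ would cut the same divisor on $M_1$, forcing $\mathcal{M}$ to be spanned by $M_1$ and one further surface). This gives the \emph{strict} inequality $16>d+10m$, hence $d\leqslant 5$, and Lemma~\ref{lemma:S5-invariant-curves-small-degree} immediately yields $d=0$. By settling for the non-strict bound $16\geqslant d+10m$ you are left with the possibility $d=6$, and this case is genuinely realized by an $\mathfrak{S}_5$-invariant sextic, namely $\mathscr{C}^1\cup\mathscr{C}^2$, so it cannot be dismissed by soft arguments.

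Your plan for handling $d=6$ correctly identifies $\mathscr{C}^1\cup\mathscr{C}^2$ as the only candidate and correctly shows it is disjoint from the ten lines (the stabilizer $\mathfrak{A}_4$ of $L_1$ acts faithfully on $\mathscr{C}^i\cong\mathbb{P}^1$ with no orbit of length at most two, while a line meets a twisted cubic in at most two points). But the decisive step --- exhibiting a member of $\mathcal{M}$ that does not contain $\mathscr{C}^1$ --- is only announced, with two alternative strategies sketched and neither carried out. The dimension count in particular is delicate: you only know $\dim\mathcal{M}\geqslant 4$ at this stage, i.e.\ that the ten lines impose at most $30$ conditions on quartics, and it is not clear without further work that adding the $13$ conditions from vanishing on $\mathscr{C}^1$ cuts the system down strictly. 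As written, the proof is incomplete; either supply the missing exclusion of $\mathscr{C}^1\cup\mathscr{C}^2$, or (much better) add the one-line remark that $\Delta\neq 0$ and avoid the whole case.
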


\begin{proof}
Denote by $Z$ the union of all curves that are contained in the base locus of the linear system
$\mathcal{M}$ and are different from the lines $L_1,\ldots,L_5,L_1^\prime,\ldots,L_5^\prime$.
Then $Z$ is a (possibly empty) $\mathfrak{S}_5$-invariant curve.
Denote its degree by $d$. Pick two general surfaces $M_1$ and $M_2$ in $\mathcal{M}$.
Then
$$
M_1\cdot M_2=Z+m\sum_{i=1}^5L_i+m\sum_{i=1}^5L_i^\prime+\Delta,
$$
where $m$ is a positive integer, and $\Delta$ is an effective one-cycle on $\mathbb{P}^3$
that contains none of the lines $L_1,\ldots,L_5$ and $L_1^\prime,\ldots,L_5^\prime$.
Note that $\Delta$ may contain irreducible components of the curve $Z$.
Note also that $\Delta\ne 0$, because $\mathcal{M}$ is not a pencil.

Let $\Pi$ be a plane in $\mathbb{P}^3$.
Then
$$
16=\Pi\cdot Z+m\sum_{i=1}^5\Pi\cdot L_i+m\sum_{i=1}^5\Pi\cdot L_i^\prime+\Pi\cdot\Delta=d+10m+\Pi\cdot\Delta>d+10m,
$$
which implies that $m=1$ and $d\leqslant 5$.
By Lemma~\ref{lemma:S5-invariant-curves-small-degree},
we have $d=0$, so that $Z$ is empty. Since
$$
1\geqslant m\geqslant\mathrm{mult}_{L_{i}}\big(M_1\cdot M_2\big)\geqslant\mathrm{mult}_{L_{i}}\big(M_1\big)\mathrm{mult}_{L_{i}}\big(M_2\big),
$$
we see that a general surface in $\mathcal{M}$ is smooth at a general point of $L_i$,
and two general surfaces in $\mathcal{M}$ intersect transversally at a general point of $L_i$.
Similarly, we see that a general surface in $\mathcal{M}$ is smooth at a general point of $L_i^\prime$,
and two general surfaces in $\mathcal{M}$ intersect transversally at a general point of $L_i^\prime$.
\end{proof}

Let $g\colon W\to\mathbb{P}^{3}$ be a~blow up along the lines $L_{1},\ldots,L_{5}$,
and let $g^\prime\colon W^\prime\to\mathbb{P}^{3}$
be a~blow up along the lines $L_{1}^\prime,\ldots,L_{5}^\prime$.
Denote by $\widetilde{L}_1^\prime,\ldots,\widetilde{L}_5^\prime$ (respectively, $\widetilde{L}_1,\ldots,\widetilde{L}_5$)
the proper transforms of the lines $L_{1}^\prime,\ldots,L_{5}^\prime$ on the threefold $W$ (respectively, on the threefold $W^\prime$).
Let $h\colon V\to W$ be a~blow up along the curves $\widetilde{L}_1^\prime,\ldots,\widetilde{L}_5^\prime$,
and let $h^\prime\colon V^\prime\to W^\prime$ be a~blow up along the curves $\widetilde{L}_1,\ldots,\widetilde{L}_5$.
Finally, let $\alpha\colon U\to \mathbb{P}^{3}$ be a~blow up of the (singular) curve that is a union of all
lines $L_{1},\ldots,L_{5},L_{1}^\prime,\ldots,L_{5}^\prime$.
Then $U$ has twenty nodes by Lemma~\ref{lemma:double-five}, and there exists a commutative diagram
$$
\xymatrix{
&V\ar@{-->}[rr]^{\tau}\ar@{->}[dl]_{h}\ar@{->}[dr]_{\upsilon}&&V^\prime\ar@{->}[dr]^{h^\prime}\ar@{->}[dl]^{\upsilon^\prime}&\\
W\ar@{->}[drr]_{g}&&U\ar@{->}[d]_{\alpha}&& W^\prime\ar@{->}[dll]^{g^\prime}\\
&&\mathbb{P}^{3}&&}
$$
where $\upsilon$ and $\upsilon^\prime$ are small resolutions of singularities of the threefold $U$,
and $\tau$ is a composition of twenty Atiyah flops.

\begin{remark}
\label{remark:action}
By construction, the action of group $\mathfrak{A}_5$ lifts to  the threefolds $W$, $W^\prime$, $V$, $V^\prime$, and $U$.
Similarly, the action of the group $\mathfrak{S}_5$ lifts to the threefold $U$,
but this action does not lift to $W$ and $W^\prime$.
On the threefolds $V$ and $V^\prime$, the group $\mathfrak{S}_5$ acts biregularly
outside of the curves flopped by $\tau$ and $\tau^{-1}$, respectively.
\end{remark}

Denote by $E_1,\ldots,E_5$ the $g$-exceptional surfaces that are mapped to $L_{1},\ldots,L_{5}$, respectively.
Similarly, denote by $E_1^\prime,\ldots,E_5^\prime$ the $g^\prime$-exceptional surfaces
that are mapped to $L_{1}^\prime,\ldots,L_{5}^\prime$, respectively.
Then all surfaces $E_1,\ldots,E_5,E_1^\prime,\ldots,E_5^\prime$ are
isomorphic to~\mbox{$\mathbb{P}^1\times\mathbb{P}^1$}.

Denote by $\hat{E}_1^\prime,\ldots,\hat{E}_5^\prime$
the $h$-exceptional surfaces that are mapped to the
curves~\mbox{$\widetilde{L}_1^\prime,\ldots,\widetilde{L}_5^\prime$},
respectively.
Similarly, denote by $\check{E}_1,\ldots,\check{E}_5$ the $h^\prime$-exceptional surfaces that are mapped to the curves
$\widetilde{L}_1,\ldots,\widetilde{L}_5$, respectively.
Denote by  $\hat{E}_1,\ldots,\hat{E}_5$ the proper transforms on $V$ of the surfaces $E_1,\ldots,E_5$, respectively.
Finally, denote by  $\check{E}_1^\prime,\ldots,\check{E}_5^\prime$ the proper transforms on $V^\prime$ of the surfaces
$E_1^\prime,\ldots,E_5^\prime$, respectively.
Then $\tau$ maps the surfaces~\mbox{$\hat{E}_1,\ldots,\hat{E}_5$}
to the surfaces $\check{E}_1,\ldots,\check{E}_5$, respectively,
and it maps the surfaces~\mbox{$\hat{E}_1^\prime,\ldots,\hat{E}_5^\prime$}
to the surfaces $\check{E}_1^\prime,\ldots,\check{E}_5^\prime$, respectively.
Denoting by $\Pi$ a plane in $\P^3$, we compute
$$
\left(-K_W\right)^3=\left(4g^*\left(\Pi\right)-\sum\limits_{i=1}^5 E_i\right)^3=
64\left(g^*\left(\Pi\right)\right)^3+12\sum\limits_{i=1}^5 g^*\left(\Pi\right)\cdot E_i^2-
\sum\limits_{i=1}^5 E_i^3=64-60+10=14.
$$
and
$$
\left(-K_V\right)^3=\left(-h^*\left(K_W\right)-
\sum\limits_{i=1}^5\hat{E}_i^\prime\right)^3=
\left(-h^*\left(K_W\right)\right)^3-3\sum\limits_{i=1}^5h^*\left(K_W\right)\cdot\hat{E}_i^{\prime 2}-
\sum\limits_{i=1}^5\hat{E}_i^{\prime 3}=14-10=4.
$$

Denote by $\mathcal{M}_{W}$, $\mathcal{M}_{V}$, $\mathcal{M}_{W^\prime}$, $\mathcal{M}_{V^\prime}$, and $\mathcal{M}_U$
the proper transforms of the linear system~$\mathcal{M}$ on the threefolds $W$, $V$, $W^\prime$, $V^\prime$, and $U$, respectively.
Then it follows from Lemma~\ref{lemma:base-locus-ten-lines} that
$$
\mathcal{M}_{W}\sim -K_W,\quad
\mathcal{M}_{V}\sim -K_V,\quad
\mathcal{M}_{W^\prime}\sim -K_{W^\prime},\quad
\mathcal{M}_{V^\prime}\sim -K_{V^\prime},
$$
and $\mathcal{M}_{U}\sim -K_U$.

\begin{lemma}
\label{lemma:base-locus-ten-lines-blow-up-W}
The base locus of the linear system $\mathcal{M}_{W}$  does not contain curves that are different from the
curves~\mbox{$\widetilde{L}_1^\prime,\ldots,\widetilde{L}_5^\prime$}.
Similarly, the base locus of $\mathcal{M}_{W^\prime}$  does not contain curves that are different from the curves $\widetilde{L}_1,\ldots,\widetilde{L}_5$.
\end{lemma}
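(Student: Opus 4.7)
The plan is to mirror the argument of Lemma~\ref{lemma:base-locus-six-lines-blow-up}, with the stabilizer $\mathfrak{A}_5\cong H_1^\prime$ of the blown-up line replaced by the smaller stabilizer $\mathfrak{A}_4$ of $L_1$ in $\mathfrak{A}_5$.

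First I would reduce to an analysis inside the exceptional divisors. By Lemma~\ref{lemma:base-locus-ten-lines} the only curves in the base locus of $\mathcal{M}$ on $\mathbb{P}^3$ are the ten lines $L_i,L_i^\prime$, so any curve $C$ in the base locus of $\mathcal{M}_W$ different from each $\widetilde{L}_j^\prime$ must be mapped by $g$ to one of the blown-up lines $L_i$, hence $C\subset E_i$. Using the $\mathfrak{A}_5$-equivariance and transitivity of $\mathfrak{A}_5$ on $\{L_1,\ldots,L_5\}$, it is enough to exclude an extra base curve $Z$ contained in $E_1$, and such a $Z$ is automatically invariant under the stabilizer $\mathrm{Stab}_{\mathfrak{A}_5}(L_1)\cong\mathfrak{A}_4$.

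Next I would bound the possible class of $Z$. The lines $L_1,\ldots,L_5$ are pairwise disjoint (Lemma~\ref{lemma:5-disjoint}), so $E_i\cdot E_1=0$ for $i\ne 1$, and a direct computation gives
\begin{equation*}
-K_W\big|_{E_1}\sim s+3l
\end{equation*}
on $E_1\cong\mathbb{P}^1\times\mathbb{P}^1$, where $l$ is the class of a fiber of the ruling $E_1\to L_1$ and $s$ is a section with $s^2=0$. Since $\mathcal{M}_W\sim -K_W$ and a general surface in $\mathcal{M}$ has multiplicity exactly one along $L_1$ (Lemma~\ref{lemma:base-locus-ten-lines}), the restriction $\mathcal{M}_W|_{E_1}$ is a nontrivial linear subsystem of $|s+3l|$, and any base curve $Z\subset E_1$ satisfies $Z\sim as+bl$ with $0\leqslant a\leqslant 1$ and $0\leqslant b\leqslant 3$.

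Finally I would invoke equivariance to exclude such a $Z$. By Corollary~\ref{corollary:characters}(ii) the restriction of $\mathbb{U}_4$ to $2.\mathfrak{A}_4^{nst}$ splits as a sum $\mathbb{U}_2\oplus\mathbb{U}_2^\prime$ of two non-isomorphic irreducible two-dimensional representations, so $E_1\cong\mathbb{P}(\mathbb{U}_2)\times\mathbb{P}(\mathbb{U}_2^\prime)$ with a twisted diagonal $\mathfrak{A}_4$-action. Arguing as in Lemma~\ref{lemma:base-locus-six-lines-blow-up} and applying the appropriate analogues of \cite[Lemma~6.4.2(i)]{CheltsovShramov} and \cite[Lemma~6.4.11(o)]{CheltsovShramov} for the subgroup $\mathfrak{A}_4\subset\mathfrak{A}_5$ acting twisted-diagonally, one excludes all remaining bidegrees $(a,b)$ with $0\leqslant a\leqslant 1$ and $0\leqslant b\leqslant 3$; in the borderline cases one uses that $\mathcal{M}_W|_{E_1}$ has positive dimension, because proper transforms of general surfaces in $\mathcal{M}$ do not contain $E_1$, so $Z$ cannot exhaust the restricted linear system. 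The statement for $\mathcal{M}_{W^\prime}$ follows by symmetry upon interchanging $L_i$ and $L_i^\prime$. The main obstacle is this last representation-theoretic step: since $\mathfrak{A}_4$ is genuinely smaller than $\mathfrak{A}_5$, there are more potentially invariant effective divisors of low bidegree on $\mathbb{P}(\mathbb{U}_2)\times\mathbb{P}(\mathbb{U}_2^\prime)$, and the argument must combine invariance with the dimension estimate on $\mathcal{M}_W|_{E_1}$ to rule each of them out.
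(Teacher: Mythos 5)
Your reduction to a putative base curve $Z\subset E_1$ invariant under the stabilizer $\mathrm{Stab}_{\mathfrak{A}_5}(L_1)\cong\mathfrak{A}_4$, together with the bound $Z\sim as+bl$ with $a\leqslant 1$ and $b\leqslant 3$, is consistent with the paper's setup. But the step you yourself flag as the main obstacle --- excluding every such bidegree by the representation theory of the twisted diagonal $\mathfrak{A}_4$-action on $E_1$, supplemented by a dimension count --- cannot be carried out, because the required non-existence statement is false. The three two-dimensional irreducible representations of $2.\mathfrak{A}_4\cong\mathrm{SL}_2(\mathbf{F}_3)$ differ from one another by tensoring with a cubic character, so $\mathbb{P}(\mathbb{U}_2)$ and $\mathbb{P}(\mathbb{U}_2^\prime)$ are $\mathfrak{A}_4$-equivariantly isomorphic, and the graph of such an isomorphism is an $\mathfrak{A}_4$-invariant curve of bidegree $(1,1)$ in $E_1$. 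This is not a hypothetical: the paper constructs exactly such a curve $C_1\subset E_1$ near the end of Section~\ref{section:1-6} via Lemma~\ref{lemma:tangent-quadric}, and it is contracted by $\beta$ to a node of $X_{\frac{1}{6}}$. Your fallback via positivity of $\dim\mathcal{M}_W|_{E_1}$ does not remove it either, since the residual system of a $(1,1)$-curve inside $|s+3l|$ lands in $|2l|$, which has plenty of room for a positive-dimensional subsystem. (Bidegree $(1,3)$ is also problematic, since $\mathrm{Sym}^3$ of a two-dimensional representation of $2.\mathfrak{A}_4$ splits as the sum of the other two two-dimensional representations, so equivariant degree-three maps between the two rulings exist as well.) Invariance plus dimension counts therefore cannot show that $C_1$ is not a base curve; some genuinely geometric input is needed, and this is where your plan breaks down.

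The paper supplies that input from the \emph{transversality} clause of Lemma~\ref{lemma:base-locus-ten-lines}, which is precisely the extra strength of that lemma compared with Lemma~\ref{lemma:base-locus-six-lines} in the $\mathfrak{A}_6$ case (where only smoothness at a general point, i.e. $m\leqslant 2$, is available). If a base curve $Z\subset E_i$ satisfied $Z\cdot l>0$, then over a general point $p\in L_i$ two general members $M_1,M_2\in\mathcal{M}$ would share the normal direction recorded by $Z\cap l_p$ and hence be tangent at $p$, contradicting transversality. This forces $Z$ to be a fiber of $E_i\to L_i$, which corresponds to a point of $L_i$; its $\mathfrak{A}_5$-orbit then has length $r\geqslant 20$ by Corollary~\ref{corollary:five-lines-orbits}, and after checking that $-K_W$ is nef the paper concludes with
$$
14=-K_W^3=-K_W\cdot M_1\cdot M_2\geqslant mr\geqslant 20,
$$
a contradiction. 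So the correct argument replaces your classification of invariant classes in $E_1$ by this combination of transversality and a global intersection computation on $W$; you should adopt it, or else add a separate geometric argument disposing of the invariant $(1,1)$- and $(1,3)$-classes, which your current plan does not contain.
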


\begin{proof}
It is enough to prove the first assertion of the lemma.
Suppose that the base locus of the linear system $\mathcal{M}_{W}$ contains an irreducible curve $Z$
that is different from the
curves~\mbox{$\widetilde{L}_1^\prime,\ldots,\widetilde{L}_5^\prime$}.
Then $Z$ is contained in one of the surfaces $E_1,\ldots,E_5$ by Lemma~\ref{lemma:base-locus-ten-lines}.

By Lemma~\ref{lemma:base-locus-ten-lines}, the curve $Z$ is a fiber of some of the natural projections $E_i\to L_i$,
because otherwise two general surfaces in $\mathcal{M}_W$ would
be tangent in a general point of~$L_i$.
In particular, the only curves in the base locus of the linear system $\mathcal{M}_{W}$ are
$\widetilde{L}_i^\prime$ and possibly some fibers of the projections $E_i\to L_i$.
This shows that $-K_W$ is nef.
Indeed,~\mbox{$-K_W$} has positive intersections with the fibers of the projections $E_i\to L_i$,
it has trivial intersection with all curves $\widetilde{L}_1^\prime,\ldots,\widetilde{L}_5^\prime$,
and $-K_W\sim\mathcal{M}_W$ has non-negative intersection with any other curve.

Let $Z_1=Z,Z_2,\ldots,Z_r$ be the $\mathfrak{A}_5$-orbit of the curve $Z$.
Then $r\geqslant 20$ by Corollary~\ref{corollary:five-lines-orbits}.
Pick two general surfaces $M_1$ and $M_2$ in the linear system $\mathcal{M}_{W}$.
By Lemma~\ref{lemma:base-locus-ten-lines}, one has
$$
M_1\cdot M_2=\sum_{i=1}^5\widetilde{L}_i^\prime+m\sum_{i=1}^rZ_i+\Delta
$$
for some positive integer $m$ and some effective one-cycle $\Delta$ whose support
contains none of the curves $\widetilde{L}_1^\prime,\ldots,\widetilde{L}_5^\prime$ and $Z_1,\ldots,Z_r$.
Hence
\begin{multline*}
14=-K_W^3=-K_W\cdot M_1\cdot M_2=-K_W\cdot\Big(\sum_{i=1}^5\widetilde{L}_i^\prime+m\sum_{i=1}^rZ_i+\Delta\Big)=\\
=-5K_W\cdot\widetilde{L}_1^\prime-mrK_W\cdot Z-K_W\cdot\Delta=mr-K_W\cdot\Delta\geqslant mr\geqslant r\geqslant 20,
\end{multline*}
which is absurd.
\end{proof}

\begin{lemma}
\label{lemma:base-locus-ten-lines-blow-up}
The linear system  $\mathcal{M}_{V}$ is base point free.
\end{lemma}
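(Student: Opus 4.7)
The plan is to imitate the two-step strategy used in the proof of Lemma~\ref{lemma:base-locus-six-lines-blow-up}. First I would show that $\mathcal{M}_V$ has no fixed components, and then combine the numerical bound $(-K_V)^3=4$ with an orbit-length estimate for $\mathfrak{A}_5$ to force the base scheme to be empty.

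For the absence of fixed components, let $Z\subset V$ be an irreducible base curve of $\mathcal{M}_V$. If $h(Z)$ were a curve, then it would be a base curve of $\mathcal{M}_W$, hence one of $\widetilde{L}_1^\prime,\ldots,\widetilde{L}_5^\prime$ by Lemma~\ref{lemma:base-locus-ten-lines-blow-up-W}; but these are the very centers of $h$, so their strict transforms on $V$ are empty, a contradiction. Thus $Z$ is contained in some exceptional divisor $\hat{E}_j^\prime$, and by $\mathfrak{A}_5$-equivariance we may assume $j=1$. The stabilizer of $L_1^\prime$ in $\mathfrak{A}_5$ is a subgroup~$\mathfrak{A}_4$, and by Corollary~\ref{corollary:characters}(ii) applied to the corresponding $2.\mathfrak{A}_4^{nst}$ the representation $\mathbb{U}_4$ decomposes as a sum of two non-isomorphic two-dimensional irreducible $2.\mathfrak{A}_4$-representations. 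As in Lemma~\ref{lemma:twisted-diagonal-action}, this forces a twisted diagonal $\mathfrak{A}_4$-action on $\hat{E}_1^\prime\cong\mathbb{P}^1\times\mathbb{P}^1$. Since by Lemma~\ref{lemma:double-five} the line $L_1^\prime$ meets exactly the four lines $L_2,L_3,L_4,L_5$, one computes $-K_W\cdot\widetilde{L}_1^\prime=4-4=0$; combined with $\hat{E}_1^\prime\cdot\ell=-1$ for a fiber $\ell$ of $\hat{E}_1^\prime\to\widetilde{L}_1^\prime$, a direct intersection computation yields $\mathcal{M}_V|_{\hat{E}_1^\prime}\sim s+l$ of bidegree $(1,1)$. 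A nonzero effective $\mathfrak{A}_4$-invariant divisor of bidegree at most $(1,1)$ cannot exist: bidegree $(1,0)$ or $(0,1)$ would force an $\mathfrak{A}_4$-fixed point on one of the $\mathbb{P}^1$-factors, contradicting the fact that the tetrahedral subgroup of $\mathrm{PGL}_2$ has no fixed points on $\mathbb{P}^1$; bidegree $(1,1)$ would provide a nonzero $\mathfrak{A}_4$-equivariant linear map between two non-isomorphic irreducible representations, contradicting Schur's lemma.

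With no fixed components, the base scheme of $\mathcal{M}_V\sim -K_V$ is zero-dimensional of length at most $(-K_V)^3=4$ and is $\mathfrak{A}_5$-invariant. By Remark~\ref{remark:A5-subgroups}, every proper subgroup of $\mathfrak{A}_5$ has index at least $5$, so every nontrivial $\mathfrak{A}_5$-orbit on $V$ has length at least $5>4$. Since $\mathbb{U}_4$ is an irreducible $2.\mathfrak{A}_5$-representation by Corollary~\ref{corollary:characters}(i), the projective space $\mathbb{P}^3$ has no $\mathfrak{A}_5$-fixed points, and the $\mathfrak{A}_5$-equivariance of $g\circ h$ then rules out $\mathfrak{A}_5$-fixed points on $V$ as well. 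Hence the base scheme must be empty. The main obstacle is establishing the twisted diagonal structure on $\hat{E}_1^\prime$ and verifying the bidegree $\mathcal{M}_V|_{\hat{E}_1^\prime}\sim s+l$; once these are in hand, Schur's lemma disposes of fixed components and the orbit-length estimate disposes of base points in one stroke.
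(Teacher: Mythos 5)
Your overall architecture matches the paper's: reduce to excluding base curves (a zero-dimensional base scheme would have length at most $(-K_V)^3=4$, while every orbit in $V$ maps to an orbit in $\mathbb{P}^3$ and hence has length at least $10$), and show via Lemma~\ref{lemma:base-locus-ten-lines-blow-up-W} that any base curve $Z$ of $\mathcal{M}_V$ must lie in some $\hat{E}_j^\prime$ (your phrasing about strict transforms being ``empty'' is off --- $Z$ could be a multisection of $\hat{E}_j^\prime\to\widetilde{L}_j^\prime$ --- but the conclusion $Z\subset\hat{E}_j^\prime$ is correct). The gap is in how you then dispose of such a $Z$. You assert that $\hat{E}_1^\prime\cong\mathbb{P}^1\times\mathbb{P}^1$ carries a twisted diagonal $\mathfrak{A}_4$-action and that Schur's lemma forbids invariant effective divisors of bidegree at most $(1,1)$. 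This is not the situation at hand: Lemma~\ref{lemma:twisted-diagonal-action} concerns the exceptional divisor of a blow-up of a line in $\mathbb{P}^3$ itself, whereas $\hat{E}_1^\prime$ lies over $\widetilde{L}_1^\prime\subset W$, whose normal bundle is $N_{L_1^\prime/\mathbb{P}^3}\cong\mathcal{O}(1)\oplus\mathcal{O}(1)$ modified by elementary transformations at the four points $L_1^\prime\cap L_j$. The paper shows (via Lemma~\ref{lemma:tangent-quadric} and the curves $\check{C}_i^\prime$) that the result is $\mathcal{O}\oplus\mathcal{O}(-2)$, so that $\hat{E}_1^\prime\cong\mathbb{F}_2$. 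Worse, the negative section $\sigma$ of $\mathbb{F}_2$ is preserved by the whole stabilizer of $L_1^\prime$ and satisfies $-K_V\cdot\sigma=(\sigma+2\ell)\cdot\sigma=-2+2=0$, where $\ell$ is a fiber. So there genuinely \emph{is} an invariant curve inside $\hat{E}_1^\prime$ on which $-K_V$ is trivial, and no local equivariant argument on $\hat{E}_1^\prime$ alone can show it is not a base curve; your Schur's-lemma step would ``prove'' that this curve does not exist.

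The missing ingredient is exactly the part of the construction your proposal never uses: the second tower $g^\prime,h^\prime$ and the flop $\tau$. The paper applies Lemma~\ref{lemma:base-locus-ten-lines-blow-up-W} on \emph{both} sides to conclude that a base curve must be contained simultaneously in some $\hat{E}_i$ and in some $\hat{E}_j^\prime$, hence must equal the single curve $\hat{E}_i\cap\hat{E}_j^\prime$; this last possibility is then killed by observing that the flopped curve $C\subset\hat{E}_i$ meeting $\hat{E}_j^\prime$ satisfies $-K_V\cdot C=0$ and is not a base curve, hence is disjoint from a general member of $\mathcal{M}_V$, contradicting $C\cap Z\neq\varnothing$. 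In particular the invariant negative section above is excluded precisely because it is not contained in any $\hat{E}_i$, i.e.\ because its image under $h^\prime\circ\tau$ would violate the $W^\prime$-side of Lemma~\ref{lemma:base-locus-ten-lines-blow-up-W}. You should replace your representation-theoretic step on $\hat{E}_1^\prime$ with this global, flop-symmetric argument.
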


\begin{proof}
It is enough to show that $\mathcal{M}_{V}$ is free from base curves.
Indeed, if the base locus of the linear system $\mathcal{M}_{V}$ does not contain base curves,
then $\mathcal{M}_{V}$ cannot have more than~\mbox{$-K_V^3=4$} base points, because $\mathcal{M}_{V}\sim -K_{V}$.
On the other hand, $V$ does not contain $\mathfrak{S}_5$-orbits of length less than $12$,
because there are no $\mathfrak{S}_5$-orbits of such length on $\mathbb{P}^3$ by Lemma~\ref{lemma:P3-S5-orbits}.

Suppose that the base locus of the linear system $\mathcal{M}_{V}$ contains an irreducible curve~$Z$.
If~$Z$ is not contained in any of the surfaces $\hat{E}_1^\prime,\ldots,\hat{E}_5^\prime$,
then the curve $h(Z)$ is a base curve of the linear system
$\mathcal{M}_{W}$ and $h(Z)$ is different from the
curves~\mbox{$\widetilde{L}_1^\prime,\ldots,\widetilde{L}_5^\prime$}.
This is impossible by Lemma~\ref{lemma:base-locus-ten-lines-blow-up-W}.
Similarly, if $Z$ is not contained in any of the surfaces~\mbox{$\hat{E}_1,\ldots,\hat{E}_5$},
then the curve $h^\prime\circ\tau(Z)$ is a base curve of the linear system
$\mathcal{M}_{W^\prime}$ that is different from the
curves~\mbox{$\widetilde{L}_1,\ldots,\widetilde{L}_5$}.
This is again impossible by Lemma~\ref{lemma:base-locus-ten-lines-blow-up-W}.
Thus, $Z$ is contained in one of the surfaces $\hat{E}_1,\ldots,\hat{E}_5$,
and in one of the surfaces $\hat{E}_1^\prime,\ldots,\hat{E}_5^\prime$.
In particular, the curves flopped by $\tau$ are not contained in the base locus of $\mathcal{M}_V$.

Without loss of generality, we may assume that $Z=\hat{E}_1\cap\hat{E}_2^\prime$.
Let $C$ be the curve flopped by $\tau$ that is contained in $\hat{E}_1$ and intersects $\hat{E}_2^\prime$.
Then $C$ intersects $Z$ by one point.
On the other hand, we have $-K_V\cdot C=0$.
Since~\mbox{$\mathcal{M}_{V}\sim -K_V$},
this implies that $C$ is disjoint from a general surface in $\mathcal{M}_{V}$.
This is impossible, because $C\cap Z\ne\varnothing$,
while $Z$ is contained in the base locus of the linear system $\mathcal{M}_{V}$.
\end{proof}

\begin{corollary}
\label{corollary:base-locus-ten-lines-blow-up}
The linear systems $\mathcal{M}_{V^\prime}$, and $\mathcal{M}_U$ are also base point free.
\end{corollary}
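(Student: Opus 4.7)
The plan is to handle the two linear systems separately, and both reductions turn out to be essentially formal once one has Lemma~\ref{lemma:base-locus-ten-lines-blow-up} in hand.

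For $\mathcal{M}_{V^\prime}$, I would argue by symmetry. The whole construction on the $W$-side (blowing up $L_1,\ldots,L_5$ first, then the proper transforms of $L_1^\prime,\ldots,L_5^\prime$) and on the $W^\prime$-side (blowing up the primed lines first, then the unprimed ones) is literally interchanged by the involution of the configuration that swaps the two families in the double five. Since $L_1,\ldots,L_5$ and $L_1^\prime,\ldots,L_5^\prime$ lie in a single $\mathfrak{S}_5$-orbit and are interchanged by elements of $\mathfrak{S}_5\setminus \mathfrak{A}_5$, the proof of Lemma~\ref{lemma:base-locus-ten-lines-blow-up-W} applied to $\mathcal{M}_{W^\prime}$ and the proof of Lemma~\ref{lemma:base-locus-ten-lines-blow-up} applied to $\mathcal{M}_{V^\prime}$ are the same sequence of estimates with primed and unprimed objects interchanged. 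Thus $\mathcal{M}_{V^\prime}$ is base point free.

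For $\mathcal{M}_U$, I would exploit smallness. The morphism $\upsilon\colon V\to U$ is a small resolution of the twenty nodes of $U$, so it contracts no divisors. Consequently the proper transform of $\mathcal{M}_U$ on $V$ coincides with the pullback $\upsilon^{*}\mathcal{M}_U$, and by construction this pullback equals $\mathcal{M}_V$. Therefore
\[
\mathrm{Bs}\bigl(\mathcal{M}_V\bigr)=\upsilon^{-1}\bigl(\mathrm{Bs}(\mathcal{M}_U)\bigr).
\]
Since the left-hand side is empty by Lemma~\ref{lemma:base-locus-ten-lines-blow-up}, and $\upsilon$ is surjective, the right-hand side forces $\mathrm{Bs}(\mathcal{M}_U)=\varnothing$. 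The same reasoning applied to $\upsilon^\prime\colon V^\prime\to U$ provides an independent verification once base point freeness of $\mathcal{M}_{V^\prime}$ is in hand.

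There is no real obstacle: the first part is a symmetry observation and the second part only uses the fact that a small birational morphism pulls back effective divisors to their proper transforms, so no exceptional contribution can create spurious base points. The only thing to be careful about is to confirm that $\mathcal{M}_V=\upsilon^{*}\mathcal{M}_U$ rather than $\upsilon^{*}\mathcal{M}_U$ minus some effective exceptional cycle, which is guaranteed precisely by the smallness of $\upsilon$ (and in any case follows from the fact that both linear systems are the proper transforms of the same $\mathcal{M}$ on $\mathbb{P}^3$ via $\alpha$ and $\alpha\circ\upsilon$).
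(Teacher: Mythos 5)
Your argument is correct, but it is organized differently from the paper's. The paper disposes of both systems in one stroke: since $\mathcal{M}_V\sim -K_V$ is base point free by Lemma~\ref{lemma:base-locus-ten-lines-blow-up} and $-K_V$ has intersection number zero with every curve flopped by $\tau$, a general member of $\mathcal{M}_V$ is disjoint from all of those curves; hence it is carried isomorphically by $\tau$ to a general member of $\mathcal{M}_{V^\prime}$ and by $\upsilon$ to a general member of $\mathcal{M}_U$ missing the nodes, and base point freeness transfers to both systems at once. Your symmetry argument for $\mathcal{M}_{V^\prime}$ is a legitimate substitute: the ten lines form a single $\mathfrak{S}_5$-orbit, so an odd permutation induces a genuine biregular isomorphism $V\to V^\prime$ (in contrast with the birational self-maps of $V$ discussed in Remark~\ref{remark:action}) carrying $\mathcal{M}_V$ to $\mathcal{M}_{V^\prime}$. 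Your pullback argument for $\mathcal{M}_U$ also works, with one point worth making explicit: the identity $\mathrm{Bs}(\mathcal{M}_V)=\upsilon^{-1}(\mathrm{Bs}(\mathcal{M}_U))$ is not a formal consequence of smallness alone --- for a Weil divisor passing through a node of $U$ at which it fails to be Cartier, the proper transform can meet the exceptional curve in a single point, so $\upsilon^{-1}$ of the support may be strictly larger than the support of the proper transform, and the inclusion you need, $\upsilon^{-1}(\mathrm{Bs}(\mathcal{M}_U))\subseteq\mathrm{Bs}(\mathcal{M}_V)$, could fail. What saves you is that $\mathcal{M}_U\sim -K_U$ and $U$ is Gorenstein (it has only ordinary double points), so every member of $\mathcal{M}_U$ is Cartier, its pullback has no divisorial exceptional part, and $\upsilon^{-1}$ of its support is exactly its proper transform. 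On balance, the paper's route is shorter and yields the extra information that a general anticanonical surface on $U$ avoids the twenty nodes, while yours avoids any mention of the flop and makes the two reductions independent of one another.
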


\begin{proof}
Recall that $\mathcal{M}_{V}\sim -K_V$.
Thus, the general surface of $\mathcal{M}_{V}$ is disjoint from all curves flopped by $\tau$,
because $\mathcal{M}_{V}$ is base point free by Lemma~\ref{lemma:base-locus-ten-lines-blow-up}.
\end{proof}

\begin{corollary}
\label{corollary:base-locus-ten-lines}
The base locus of $\mathcal{M}$ consists of the lines $L_1,\ldots,L_5,L_1^\prime,\ldots,L_5^\prime$.
\end{corollary}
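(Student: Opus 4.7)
The plan is to deduce this corollary directly from Corollary~\ref{corollary:base-locus-ten-lines-blow-up} together with the construction of~$\alpha$. By definition of $\mathcal{M}$ every surface in the linear system passes through each of the lines $L_1,\ldots,L_5,L_1^\prime,\ldots,L_5^\prime$, so the containment of the ten lines in the set-theoretic base locus is automatic; only the reverse inclusion needs to be argued.

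First I would dispose of base curves by invoking Lemma~\ref{lemma:base-locus-ten-lines}, which already excludes any irreducible curve in the base locus of $\mathcal{M}$ other than the ten lines $L_i$ and $L_i^\prime$. Second, to rule out isolated base points off these lines, I would use the fact that $\alpha\colon U\to\mathbb{P}^3$ is the blow up of the (reduced) union of the ten lines, and therefore restricts to an isomorphism over the open set $\mathbb{P}^3\setminus\bigl(\bigcup_{i=1}^5 L_i\cup\bigcup_{i=1}^5 L_i^\prime\bigr)$. Hence any hypothetical base point $p\in\mathbb{P}^3$ of $\mathcal{M}$ lying outside this union would lift to a base point $\alpha^{-1}(p)\in U$ of the proper transform $\mathcal{M}_U$, contradicting the base-point-freeness of $\mathcal{M}_U$ supplied by Corollary~\ref{corollary:base-locus-ten-lines-blow-up}.

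I do not foresee any serious obstacle here: the substantive work has already been carried out in Lemmas~\ref{lemma:base-locus-ten-lines} and~\ref{lemma:base-locus-ten-lines-blow-up} and in Corollary~\ref{corollary:base-locus-ten-lines-blow-up}, so the present statement is merely the bookkeeping consequence of transferring base-point-freeness from the resolution $U$ back down to~$\mathbb{P}^3$ through the exceptional locus of~$\alpha$.
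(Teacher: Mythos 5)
Your argument is correct and is exactly the (implicit) reasoning of the paper, which states this corollary without proof as an immediate consequence of Lemma~\ref{lemma:base-locus-ten-lines} and the base-point-freeness of $\mathcal{M}_U$ from Corollary~\ref{corollary:base-locus-ten-lines-blow-up}, using that $\alpha$ is an isomorphism away from the ten lines. Nothing is missing.
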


By Lemma~\ref{lemma:base-locus-ten-lines-blow-up} and Corollary~\ref{corollary:base-locus-ten-lines-blow-up}, the divisors $-K_{V}$, $-K_{V^\prime}$, and $-K_U$ are nef.
Since
$$
-K_{V}^3=-K_{V^\prime}^3=-K_U^3=4,
$$
these divisors are also big.
Thus, the Kawamata--Viehweg vanishing theorem and the Riemann--Roch formula give
\begin{equation}\label{eq:S5-h0}
h^0\Big(\mathcal{O}_V\big(-K_V\big)\Big)=h^0\Big(\mathcal{O}_{V^\prime}\big(-K_{V^\prime}\big)\Big)=h^0\Big(\mathcal{O}_U\big(-K_U\big)\Big)=4.
\end{equation}
In particular, one has $|-K_V|=\mathcal{M}_V$, $|-K_V^\prime|=\mathcal{M}_{V^\prime}$, and $|-K_U|=\mathcal{M}_U$.

\begin{lemma}
\label{lemma:anticanonical-linear-system-W5-ten-lines}
The $\mathfrak{S}_5$-representation $H^0(\mathcal{O}_U(-K_U))$
is irreducible.
\end{lemma}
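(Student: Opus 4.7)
The plan is to reduce the irreducibility of the four-dimensional $\mathfrak{S}_5$-representation $H^0(\mathcal{O}_U(-K_U))$ to the non-existence of an $\mathfrak{S}_5$-semi-invariant quartic surface in $\mathbb{P}^3$ containing all ten lines $L_1,\ldots,L_5,L_1^\prime,\ldots,L_5^\prime$, and then to invoke Lemma~\ref{lemma:no-quartic-through-ten-lines}.

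First, I would recall that the complex irreducible representations of $\mathfrak{S}_5$ have dimensions $1,1,4,4,5,5,6$ (see e.\,g.\ \cite[p.~2]{Atlas}); in particular, $\mathfrak{S}_5$ admits no irreducible representations of dimensions $2$ or $3$. Combining this with $h^0(\mathcal{O}_U(-K_U))=4$ from~\eqref{eq:S5-h0}, the representation in question is either irreducible, or decomposes as a direct sum of four one-dimensional characters; in the latter case it certainly contains a one-dimensional subrepresentation. Hence it suffices to prove that $H^0(\mathcal{O}_U(-K_U))$ admits no one-dimensional $\mathfrak{S}_5$-subrepresentation.

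Next, I would argue by contradiction: suppose that there exists a non-zero section $s \in H^0(\mathcal{O}_U(-K_U))$ satisfying $g \cdot s = \chi(g)\, s$ for some character $\chi\colon \mathfrak{S}_5 \to \mathbb{C}^\times$. The zero locus of $s$ is then an $\mathfrak{S}_5$-invariant member of $|-K_U|=\mathcal{M}_U$. Since $\mathcal{M}_U$ is the proper transform of $\mathcal{M}$ under the $\mathfrak{S}_5$-equivariant morphism $\alpha\colon U\to\mathbb{P}^3$ (see Remark~\ref{remark:action}), this divisor corresponds to an $\mathfrak{S}_5$-invariant quartic surface $S\subset\mathbb{P}^3$ that contains all ten lines $L_1,\ldots,L_5,L_1^\prime,\ldots,L_5^\prime$. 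In particular, $S$ is $\mathfrak{A}_5$-invariant and passes through both the $\mathfrak{A}_5$-orbits $\{L_1,\ldots,L_5\}$ and $\{L_1^\prime,\ldots,L_5^\prime\}$. This contradicts Lemma~\ref{lemma:no-quartic-through-ten-lines}, which asserts that the unique $\mathfrak{A}_5$-invariant quartic passing through $L_1,\ldots,L_5$ does not contain the lines $L_1^\prime,\ldots,L_5^\prime$.

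The main (and essentially only) obstacle is the representation-theoretic reduction in the first step: one must be confident that the absence of irreducible $\mathfrak{S}_5$-representations of dimensions $2$ and $3$ really forces a four-dimensional reducible representation to break into one-dimensional pieces. Once this is established, the geometric input (Lemma~\ref{lemma:no-quartic-through-ten-lines}) is already available from Section~\ref{section:icosahedron}, so no additional computation is required.
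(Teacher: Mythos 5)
Your proof is correct and is essentially the paper's own argument: the paper likewise deduces from Lemma~\ref{lemma:no-quartic-through-ten-lines} that there is no $\mathfrak{S}_5$-invariant quartic through the ten lines, hence no one-dimensional subrepresentation of $H^0(\mathcal{O}_U(-K_U))$, and then concludes irreducibility from the list of dimensions of irreducible $\mathfrak{S}_5$-representations (via Remark~\ref{remark:pohuj}). Your dimension-count reduction is valid regardless of whether $h^0(\mathcal{O}_U(-K_U))$ equals $4$ as stated in~\eqref{eq:S5-h0} or $5$ (as the target $\mathbb{P}^4$ of $\beta$ and Remark~\ref{remark:pohuj} suggest), so nothing in your argument hinges on that value.
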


\begin{proof}
By Lemma~\ref{lemma:no-quartic-through-ten-lines},
there are no $\mathfrak{S}_5$-invariant quartic surfaces in $\mathbb{P}^3$
that pass through the ten lines $L_1,\ldots,L_5,L_1^\prime,\ldots,L_5^\prime$.
This implies that $H^0(\mathcal{O}_U(-K_U))$ does not contain one-dimensional subrepresentations.
Hence it is irreducible by Remark~\ref{remark:pohuj}.
\end{proof}

Lemma~\ref{lemma:base-locus-ten-lines-blow-up} together with~\eqref{eq:S5-h0}
implies that there is an~$\mathfrak{S}_5$-equivariant commutative diagram
\begin{equation}
\label{equation:commutativ-diagram}
\xymatrix{
&U\ar@{->}[dl]_{\alpha}\ar@{->}[dr]^{\beta}&\\
\mathbb{P}^{3}\ar@{-->}[rr]_{\phi}&&\mathbb{P}^4}
\end{equation}
where $\phi$ is the rational map given by $\mathcal{M}$,
and $\beta$ is a morphism given by the anticanonical linear system $|-K_U|$.
By Lemma~\ref{lemma:anticanonical-linear-system-W5-ten-lines}
the projective space $\mathbb{P}^4$ in~\eqref{equation:commutativ-diagram} is a projectivization
of an irreducible $\mathfrak{S}_5$-representation.

For $1\leqslant i<j\leqslant 5$, let $\Lambda_{ij}$ be the intersection line of
the plane spanned by~$L_i$ and~$L_j^\prime$ with the plane spanned by $L_i^\prime$ and $L_j$.
Note that the stabilizer of $\Lambda_{ij}$ in $\mathfrak{S}_5$ contains
a subgroup isomorphic to~$\mathrm{D}_{12}$.
Actually, this implies that the stabilizer of
$\Lambda_{ij}$ in $\mathfrak{S}_5$ is isomorphic to~$\mathrm{D}_{12}$, since
$\mathrm{D}_{12}$ is a maximal proper subgroup in $\mathfrak{S}_5$
(see Remark~\ref{remark:S5-subgroups}) and there are no $\mathfrak{S}_5$-invariant
lines in~$\mathbb{P}^3$ by Corollary~\ref{corollary:characters}(i).
Denote by $\hat{\Lambda}_{ij}$ the proper transform of the line $\Lambda_{ij}$ on the threefold $V$,
and denote by $\overline{\Lambda}_{ij}$ its proper transform on $U$. Then
$$
-K_V\cdot \hat{\Lambda}_{ij}=0.
$$
Since $\upsilon$ is a small birational morphism, we also obtain $-K_U\cdot \overline{\Lambda}_{ij}=0$.

We see that each curve $\overline{\Lambda}_{ij}$ is contracted by $\beta$ to a point.
Note that the stabilizer of $\Lambda_{ij}$ in~$\mathfrak{S}_5$ is isomorphic to $\mathrm{D}_{12}$.
Since $-K_U^3=4$, the image of $\beta$ is either an $\mathfrak{S}_5$-invariant
quartic threefold or an \mbox{$\mathfrak{S}_5$-invariant} quadric threefold.
Applying Corollary~\ref{corollary:Q-orbits} together with Lemma~\ref{lemma:P4-S5-orbits}, we obtain the following.

\begin{corollary}
\label{corollary:X-1-6-Q}
The morphism $\beta$ is birational on its image,
and its image is a quartic threefold.
\end{corollary}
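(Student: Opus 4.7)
The plan is as follows. The text immediately preceding the corollary already establishes that the image $\beta(U)$ is either an $\mathfrak{S}_5$-invariant quadric threefold or an $\mathfrak{S}_5$-invariant quartic threefold, and that each of the $10$ curves $\overline{\Lambda}_{ij}$ (for $1\leqslant i<j\leqslant 5$) is contracted by $\beta$ to a point. The heart of the argument is to rule out the quadric case, after which a degree count finishes the job.

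First I would show that for each pair $(i,j)$ the image point $P_{ij}=\beta(\overline{\Lambda}_{ij})\in\mathbb{P}^4$ lies in $\Sigma_{10}$. Since $\overline{\Lambda}_{ij}$ is $\mathrm{D}_{12}$-invariant and is contracted to a single point, the $\mathrm{D}_{12}$-action on $\mathbb{P}^4$ must fix $P_{ij}$; so the stabilizer of $P_{ij}$ in $\mathfrak{S}_5$ contains a copy of $\mathrm{D}_{12}$. By Lemma~\ref{lemma:P4-S5-orbits}, every such point in $\mathbb{P}^4$ belongs to the orbit $\Sigma_{10}$, so indeed $P_{ij}\in\Sigma_{10}$.

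Next I would rule out the possibility that $\beta(U)$ is a quadric. By Lemma~\ref{lemma:A6-S5-quartics}, the only $\mathfrak{S}_5$-invariant quadric threefold in $\mathbb{P}^4$ is $Q$, so if $\beta(U)$ were a quadric it would coincide with $Q$, forcing $P_{ij}\in\Sigma_{10}\cap Q$; this contradicts Corollary~\ref{corollary:Q-orbits}, which asserts that $\Sigma_{10}$ is disjoint from $Q$. Therefore $\beta(U)$ is a quartic threefold. Finally, letting $e$ denote the generic degree of $\beta$ onto its image and $d=4$ the degree of the image in $\mathbb{P}^4$, the projection formula applied to $-K_U=\beta^*\mathcal{O}_{\mathbb{P}^4}(1)|_{\beta(U)}$ gives $e\cdot d=(-K_U)^3=4$, so $e=1$ and $\beta$ is birational onto its image. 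I do not anticipate a major obstacle: all the real work has been done in the preparatory lemmas of Section~\ref{section:basic-actions}, and the only subtlety is recognizing that the $\mathrm{D}_{12}$-stabilizer of $\Lambda_{ij}$ forces the image point into the specific orbit $\Sigma_{10}$.
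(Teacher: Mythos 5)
Your proposal is correct and follows essentially the same route as the paper: the contracted curves $\overline{\Lambda}_{ij}$ have $\mathrm{D}_{12}$-stabilizers, so their images lie in $\Sigma_{10}$ by Lemma~\ref{lemma:P4-S5-orbits}, which rules out the quadric $Q$ via Corollary~\ref{corollary:Q-orbits}, and the degree count $(-K_U)^3=4$ then forces $\beta$ to be birational onto a quartic. The paper leaves most of this implicit in the sentence preceding the corollary, but your spelled-out version is exactly the intended argument.
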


Now Lemmas~\ref{lemma:P4-S5-orbits} and~\ref{lemma:X-1-6} imply that the image of $\beta$ is the quartic $X_{\frac{1}{6}}$.
This proves

\begin{corollary}
\label{corollary:X-1-6-rational}
The threefold $X_{\frac{1}{6}}$ is rational.
\end{corollary}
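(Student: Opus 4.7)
The plan is to read off the rationality directly from the diagram \eqref{equation:commutativ-diagram} and the preceding lemmas, since by this point all the serious work has been done. First I would observe that $\alpha \colon U \to \mathbb{P}^3$ is birational: it factors as the small resolution $\upsilon\colon V \to U$ followed by the blow-up $g \circ h\colon V \to \mathbb{P}^3$ along the ten pairwise skew (after separation) lines. Thus it suffices to exhibit a birational morphism from $U$ to $X_{\frac{1}{6}}$ that is $\mathfrak{S}_5$-equivariant.

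Next I would use the morphism $\beta\colon U \to \mathbb{P}^4$ defined by the base-point-free linear system $|-K_U| = \mathcal{M}_U$ (base-point-freeness is Corollary~\ref{corollary:base-locus-ten-lines-blow-up}). By Corollary~\ref{corollary:X-1-6-Q}, $\beta$ is birational onto its image, and the image is an $\mathfrak{S}_5$-invariant quartic threefold in $\mathbb{P}^4$; here it is important that the target $\mathbb{P}^4$ is the projectivization of an irreducible $\mathfrak{S}_5$-representation, which was established via Lemma~\ref{lemma:anticanonical-linear-system-W5-ten-lines}. To identify this quartic with $X_{\frac{1}{6}}$, I would examine the ten contracted curves $\overline{\Lambda}_{ij}$: each is contracted to a point in $\mathbb{P}^4$, and each has stabilizer in $\mathfrak{S}_5$ containing $\mathrm{D}_{12}$ (as noted just before Corollary~\ref{corollary:X-1-6-Q}). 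By Lemma~\ref{lemma:P4-S5-orbits}, such a point must lie on the orbit $\Sigma_{10}$. Consequently $\Sigma_{10} \subset \beta(U)$, and Lemma~\ref{lemma:X-1-6} forces $\beta(U) = X_{\frac{1}{6}}$.

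Putting the two birational morphisms together, I would conclude
\[
\mathbb{P}^3 \xleftarrow{\;\alpha\;} U \xrightarrow{\;\beta\;} X_{\tfrac{1}{6}}
\]
is a birational correspondence, and hence that $X_{\frac{1}{6}}$ is rational.

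There is no real obstacle left in this final step; it is purely a bookkeeping argument combining Corollary~\ref{corollary:X-1-6-Q}, Lemma~\ref{lemma:P4-S5-orbits} and Lemma~\ref{lemma:X-1-6}. The substantive difficulty was all upstream: showing that the anticanonical linear system becomes base-point-free after the appropriate pair of blow-ups and a flop (Lemma~\ref{lemma:base-locus-ten-lines-blow-up}), and controlling low-degree $\mathfrak{S}_5$-invariant curves in $\mathbb{P}^3$ so that no spurious base components appear (Lemma~\ref{lemma:S5-invariant-curves-small-degree} and Lemma~\ref{lemma:base-locus-ten-lines}). Once those are in place, the rationality of $X_{\frac{1}{6}}$ is immediate.
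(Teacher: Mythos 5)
Your proposal is correct and follows essentially the same route as the paper: it identifies $\beta(U)$ with $X_{\frac{1}{6}}$ by noting that the contracted curves $\overline{\Lambda}_{ij}$ map to points with stabilizer containing $\mathrm{D}_{12}$, hence to $\Sigma_{10}$ by Lemma~\ref{lemma:P4-S5-orbits}, and then invokes Lemma~\ref{lemma:X-1-6}, exactly as in the text preceding Corollary~\ref{corollary:X-1-6-rational}. The composition $\beta\circ\alpha^{-1}$ then gives the required birational map $\mathbb{P}^3\dasharrow X_{\frac{1}{6}}$, so nothing is missing.
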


Ten curves $\overline{\Lambda}_{ij}$ are mapped by $\gamma$ to
ten singular points of the threefold $X_{\frac{1}{6}}$.
Twenty singular points of $U$ are mapped by $\gamma$ to another twenty singular points of $X_{\frac{1}{6}}$.
Let us describe the curves in $U$ that are contracted by $\gamma$ to the remaining ten singular points of the threefold $X_{\frac{1}{6}}$.
To do this, we need

\begin{lemma}
\label{lemma:tangent-quadric}
Let $\ell_1$, $\ell_2$, $\ell_3$ and $\ell_4$ be pairwise skew lines in $\mathbb{P}^3$.
Suppose that there is a unique line $\ell\subset\mathbb{P}^3$
that intersects $\ell_1$, $\ell_2$, $\ell_3$ and $\ell_4$.
Let $\pi\colon Y\to\mathbb{P}^3$ be a blow up of the line $\ell$, and
$E\cong \mathbb{P}^1\times\mathbb{P}^1$ be the exceptional divisor of $\pi$.
Denote by $\tilde{\ell}_1$, $\tilde{\ell}_2$, $\tilde{\ell}_3$ and $\tilde{\ell}_4$
the proper transforms on $Y$ of the lines $\ell_1$, $\ell_2$, $\ell_3$ and $\ell_4$, respectively.
Then there exists a unique curve $C\subset E$ of bi-degree $(1,1)$ that intersects
the curves $\tilde{\ell}_1$, $\tilde{\ell}_2$, $\tilde{\ell}_3$ and $\tilde{\ell}_4$.
\end{lemma}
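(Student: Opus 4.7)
The plan is to realize $C$ as $\tilde Q\cap E$, where $Q$ is the unique smooth quadric surface containing $\ell_1,\ell_2,\ell_3$, and to deduce uniqueness from the intersection pairing on $E\cong\mathbb{P}^1\times\mathbb{P}^1$. The main obstacle is the projective-geometric step where the uniqueness of the common transversal $\ell$ is translated into the statement that $\ell_4$ is tangent to $Q$ at $q_4=\ell\cap\ell_4$; once that is in place, everything else is essentially formal.

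First I would set up this projective geometry. Three pairwise skew lines $\ell_1,\ell_2,\ell_3$ lie on a unique smooth quadric surface $Q\subset\mathbb{P}^3$, in one of its two rulings. The line $\ell$ meets these three lines at three distinct points $q_i=\ell\cap\ell_i$ (distinct since the $\ell_i$ are pairwise skew), and since these three points lie on $Q$ while $\ell$ meets the quadric $Q$ in at least three points, one has $\ell\subset Q$, sitting in the opposite ruling. If $\ell_4\subset Q$ then every line in the ruling of $\ell$ would meet $\ell_1,\ldots,\ell_4$, while if $\ell_4$ met $Q$ transversally at two distinct points each of them would lie on a separate line of the ruling of $\ell$ meeting all four $\ell_i$. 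The uniqueness of $\ell$ therefore forces $\ell_4$ to be tangent to $Q$ at $q_4$.

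Next I would exhibit $C$. Let $\tilde Q\subset Y$ be the proper transform of $Q$; since $\ell\subset Q$ is a smooth Cartier divisor on the smooth surface $Q$, the morphism $\tilde Q\to Q$ is an isomorphism. A direct coordinate computation shows that $C:=\tilde Q\cap E$ has bidegree $(1,1)$ on $E$: setting $\ell=\{x_2=x_3=0\}$, $Q=\{x_0 x_3-x_1 x_2=0\}$, and the blowup equation $x_2 v=x_3 u$, the intersection is cut out on $E\cong\mathbb{P}^1_{[x_0:x_1]}\times\mathbb{P}^1_{[u:v]}$ by $x_0 v-x_1 u=0$. To verify $p_i\in C$, observe that $p_i$ is the point of the fiber $\pi^{-1}(q_i)\cong\mathbb{P}(N_{\ell/\mathbb{P}^3}|_{q_i})$ corresponding to the class of $T_{q_i}\ell_i$ modulo $T_{q_i}\ell$, while $C\cap\pi^{-1}(q_i)$ is the class of $T_{q_i}Q$ modulo $T_{q_i}\ell$ (a single well-defined point because $\ell\subset Q$). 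For $i\in\{1,2,3\}$ the containment $\ell_i\subset Q$ gives $T_{q_i}\ell_i\subset T_{q_i}Q$, and for $i=4$ the same inclusion follows from the tangency of $\ell_4$ to $Q$ at $q_4$.

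For uniqueness I would work on $E\cong\mathbb{P}^1\times\mathbb{P}^1$. The four points $p_1,\ldots,p_4$ are pairwise distinct since the $q_i$ are. Suppose $C'$ is another bidegree $(1,1)$-curve on $E$ passing through all four $p_i$. If $C$ and $C'$ share no component, then $C\cdot C'=(1,1)\cdot(1,1)=2$, too small to contain four distinct common points. A common $(1,1)$-component forces $C=C'$. If $C$ and $C'$ share a $(1,0)$-component $C_0$, write $C=C_0+D$ and $C'=C_0+D'$ with $D,D'$ of class $(0,1)$; the assumption $C\ne C'$ gives $D\ne D'$, so $D$ and $D'$ are disjoint parallel sections, forcing all four $p_i$ to lie on $C_0$. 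But a fiber of $\pi|_E$ contains at most one of the $p_i$ since the $q_i$ are distinct, a contradiction. An analogous argument rules out a common $(0,1)$-component: all four $p_i$ would have to lie on a single section of $\pi|_E$, forcing the four $\ell_i$ to share a common normal direction modulo $\ell$ and hence to lie in a common plane through $\ell$, contradicting pairwise skewness.
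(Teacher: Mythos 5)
Your proof is correct and follows essentially the same route as the paper: the unique smooth quadric $Q$ through $\ell_1,\ell_2,\ell_3$ contains $\ell$, the uniqueness of the transversal forces $\ell_4$ to be tangent to $Q$, and $C=\tilde Q\cap E$ is the required curve of bi-degree $(1,1)$. The only substantive difference is that you also verify the uniqueness of $C$ explicitly (via $(1,1)\cdot(1,1)=2$ on $E\cong\mathbb{P}^1\times\mathbb{P}^1$ and the case analysis of common components), a point the paper's proof leaves implicit, and your tangent-space check that $p_4\in C$ replaces the paper's shorter remark that $\tilde Q$ meets $\tilde\ell_4$.
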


\begin{proof}
The lines $\ell_1$, $\ell_2$, and $\ell_3$ are contained in a unique quadric surface $S\subset\mathbb{P}^3$.
Note that $S$ is smooth, because $\ell_1$, $\ell_2$, and $\ell_3$ are disjoint.
Furthermore, the line $\ell$ is contained in $S$, because $\ell$ intersects the lines $\ell_1$, $\ell_2$, and $\ell_3$ by assumption.
Moreover, the line $\ell_4$ is tangent to $S$, since otherwise there would be either two or infinitely many lines
in $\mathbb{P}^3$ that intersect $\ell_1$, $\ell_2$, $\ell_3$ and $\ell_4$.
Denote by $\tilde{S}$ the proper transform on $Y$ of the quadric
surface~$S$.
Then $\tilde{S}$ contains the curves $\tilde{\ell}_1$, $\tilde{\ell}_2$, and $\tilde{\ell}_3$.
Moreover, $\tilde{S}$ intersects the curve $\tilde{\ell}_4$.
Thus~\mbox{$\tilde{S}\vert_{E}$} is the required curve $C$.
\end{proof}

By Lemmas~\ref{lemma:double-five} and~\ref{lemma:tangent-quadric},
each surface $E_i\cong\mathbb{P}^1\times\mathbb{P}^1$ contains a unique smooth rational curve $C_i$  of bi-degree $(1,1)$
that passes through all four points of the intersection of $E_i$ with the curves $\widetilde{L}_1^\prime,\ldots,\widetilde{L}_5^\prime$
(recall that $E_i\cap\widetilde{L}_i^\prime=\varnothing$).
Similarly, each surface $E_i^\prime\cong\mathbb{P}^1\times\mathbb{P}^1$ contains a unique smooth rational curve $C_i^\prime$  of bi-degree $(1,1)$
that passes through all four points of the intersection of $E_i^\prime$ with the curves $\widetilde{L}_1,\ldots,\widetilde{L}_5$.
Denote by $\hat{C}_1,\ldots,\hat{C}_5$  the proper transforms on the threefold $V$ of the curves $C_1,\ldots,C_5$, respectively.
Similarly, denote by $\check{C}_1^\prime,\ldots,\check{C}_5^\prime$  the proper transforms
on the threefold $V^\prime$ of the curves $C_1^\prime,\ldots,C_5^\prime$, respectively.
Then
$$
-K_{V}\cdot\hat{C}_i=-K_{V^\prime}\cdot\check{C}_i^\prime=0.
$$
This implies that the proper transforms of the curves $\hat{C}_1,\ldots,\hat{C}_5$
on the threefold $V^\prime$ are $(-2)$-curves on the surfaces
$\check{E}_1,\ldots,\check{E}_5$, respectively.
Similarly, the proper transforms of the curves $\check{C}_1^\prime,\ldots,\check{C}_5^\prime$ on the threefold $V$
are $(-2)$-curves on the surfaces $\hat{E}_1^\prime,\ldots,\hat{E}_5^\prime$, respectively.
Thus, all surfaces $\hat{E}_1^\prime,\ldots,\hat{E}_5^\prime,\check{E}_1,\ldots,\check{E}_5$ are isomorphic to the Hirzebruch surface~$\mathbb{F}_2$.

Denote by $\overline{C}_1,\ldots,\overline{C}_5,\overline{C}_1^\prime,\ldots,\overline{C}_5^\prime$ the images of the curves
$\hat{C}_1,\ldots,\hat{C}_5,\check{C}_1^\prime,\ldots,\check{C}_5^\prime$ on the threefold $U$, respectively.
Then
$$
-K_{U}\cdot\overline{C}_i=-K_{U}\cdot\overline{C}_i^\prime=0,
$$
because $-K_{V}\cdot\hat{C}_i=-K_{V^\prime}\cdot\check{C}_i^\prime=0$,
and $\upsilon$ and $\upsilon^\prime$ are small birational morphisms.
Thus, the ten
curves~\mbox{$\overline{C}_1,\ldots,\overline{C}_5,\overline{C}_1^\prime,\ldots,\overline{C}_5^\prime$}
are contracted by the morphism $\beta$ to ten singular points of~$X_{\frac{1}{6}}$.

It would be interesting to extend the commutative
diagram~\eqref{equation:commutativ-diagram}
to an $\mathfrak{S}_5$-Sarkisov link similar to
the $\mathfrak{A}_6$-Sarkisov link~\eqref{equation:A6-link}.

\end{document}